\pdfoutput=1
\documentclass[11pt]{article}
\usepackage{geometry}                
\usepackage[parfill]{parskip}    
 \usepackage{color} 
\usepackage{graphicx}
\usepackage{amssymb}
\usepackage{amsmath}
\usepackage{amsthm}
\usepackage{epstopdf}

\DeclareGraphicsRule{.tif}{png}{.png}{`convert #1 `dirname #1`/`basename #1 .tif`.png}

\title{Apolarity for determinants and permanents of generic matrices}
\author{Masoumeh (Sepideh) Shafiei\\ \footnotesize{Department of Mathematics, Northeastern University, 
Boston, MA 02115, USA}}

\date{March 2013}                                           

\newtheorem{thm}{Theorem}[section]
\newtheorem{lem}[thm]{Lemma}
\newtheorem{cor}[thm]{Corollary}
\newtheorem{prop}[thm]{Proposition}
\newtheorem*{ack}{Acknowledgment}

\theoremstyle{definition}
\newtheorem{ex}[thm]{Example}

\theoremstyle{definition}
\newtheorem{defi}[thm]{Definition}

\theoremstyle{definition}
\newtheorem*{Notation}{Notation}

\theoremstyle{definition}
\newtheorem*{rmk}{Remark}

\theoremstyle{definition}
\newtheorem{remark}[thm]{Remark}

\begin{document}
\maketitle


\begin{abstract} 

We show that the apolar ideals to the determinant and permanent of a generic matrix, the Pfaffian of a generic skew symmetric matrix  and the Hafnian of a generic symmetric matrix are each generated in degree two. In each case we specify the generators and a Gr\"{o}bner basis of the apolar ideal. As a consequence, using a result of K.~Ranestad and F.-O. Schreyer we give lower bounds to the cactus rank and rank of each of these invariants. We compare these bounds with those obtained by J. Landsberg and Z. Teitler.

\end{abstract}

\section{Introduction}
\label{intro}

This paper is originally motivated by a question from Zach Teitler about the generating degree of the annihilator ideal of the determinant and the permanent of a generic $n\times n$ matrix. Here annihilator is meant in the sense of the apolar pairing, i.e. Macaulay's inverse system. Our main result is that the apolar ideals of the determinant and of the permanent of a generic matrix  are generated in degree $2$ (Theorems \ref{thm:main-generic-determinant} and \ref{thm:main-generic-permanent}). The reason for Teitler's interest in this problem is the recent paper by Kristian Ranestad and Frank-Olaf~Schreyer \cite{RS}, which gives a lower bound for smoothable rank, border rank and cactus rank of a homogeneous polynomial in terms of the generating degree of the apolar ideal and the dimension of the Artinian apolar algebra defined by the apolar ideal. We apply this and our result to bounding the scheme/cactus length of the determinant and the permanent of the generic matrix (Theorem \ref{generic-det-perm-RS-rank}). In section \ref{Annihilator of the Pfaffian and Hafnian} we give the analogous result for the annihilator ideal of the Pfaffian of a generic skew symmetric matrix  (Theorem \ref{thm:Pfaffian-main-theorem}) and the annihilator of the Hafnian of a generic symmetric matrix (Theorem \ref{thm:Hafnian-main-theorem-cor})

In a sequel paper \cite{Sh2} we study the apolar ideal of the determinant and permanent of the generic symmetric matrix.

Let  $\sf k$ be a field of characteristic zero or characteristic $p>2$, and let $A=(a_{ij})$ be a square matrix of size $n$ with $n^{2}$ distinct variables. The determinant and permanent of $A$ are homogeneous polynomials of degree $n$. Let $R=\sf k$$ [ a_{ij}]$ be a polynomial ring and $S=\sf k$$[d_{ij}]$ be the ring of inverse polynomials associated to $R$, and let $R_k$ and $S_k$ denote the degree-$k$ homogeneous summands. Then $S$ acts on $R$ by contraction:

\begin{equation}
 ({d_{ij}})^k \circ (a_{uv})^\ell=\begin{cases}
  a_{uv}^{\ell-k} &  \text{if $(i,j)=(u,v)$},\\
   0 &  \text{otherwise}.
\end{cases}
\end{equation}
If $h\in S_k$ and $F \in R_n$, then we have $h\circ F\in R_{n-k}$. This action extends multilinearly to the action of $S$ on $R$. When the characteristic of the field $\sf k$ is zero or $\mathrm{char}{\sf k}=p$ greater than the degree of $F$, the contraction action can be replaced by the action of partial differential operators without coefficients (\cite{IK}, Appendix A, and \cite{Ge}). 

\vspace{.2in}
\begin{defi}
 To each degree-$j$ homogeneous element, $F\in R_j$ we associate $I=\mathrm {Ann} (F)$ in $S=\sf k$$[d_{ij}]$ consisting of polynomials $\Phi$ such that $\Phi\circ F=0$. We call $I=\mathrm {Ann} (F)$, the \emph{apolar ideal} of $F$;  and the quotient algebra $S/\mathrm {Ann} (F)$ the apolar algebra of $F$.\par
Let $F \in R$, then $\mathrm {Ann} (F) \subset S$ is an ideal and we have  
 
 \begin{equation*}
{(\mathrm {Ann}(F))}_{k}=\{ h \in S_k|h \circ F=0\}.
 \end{equation*}
 \end{defi}
 \vspace{.2in}
 \begin{remark}
 Let $\phi: (S_i,R_i)\rightarrow \sf k$ be the pairing $\phi(g,f)=g\circ f$,  and $V$ be a vector subspace of $R_k$, then we have
 \begin{equation} \label{eq:Intro-dim}
 \dim_{\sf k} (V^\perp)=\dim_{\sf k} S_k-\dim_{\sf k} V.
 \end{equation}
 \end{remark}

For $V\subset R_k$, we denote by $V^\perp = \mathrm{Ann}(V)\cap S_k$.

Let $F$ be a form of degree $j$ in $R$. We denote by  $<F>_{j-k}$ the vector space $S_k \circ F \subset R_{j-k}$.  (\cite{IK}). 
 
We denote by  $M_k(A)$ the vector subspace of $R$ spanned by the $k \times k$ minors of $A$.
 \vspace{.2in}
\begin{lem}\label{lem:intro-gen}
 \begin{equation}
S_k\circ(\det(A))=M_{n-k}(A) \subset R_{n-k}.
 \end{equation}
 \end{lem}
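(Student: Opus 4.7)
The plan is to reduce the claim to the classical Laplace expansion of the determinant. Fix a monomial basis for $S_k$, so it suffices to understand $h \circ \det(A)$ when $h = d_{i_1 j_1}\cdots d_{i_k j_k}$ (with possible repetitions among the indexed pairs). Writing $\det(A) = \sum_\sigma \mathrm{sgn}(\sigma)\, a_{1\sigma(1)}\cdots a_{n\sigma(n)}$, each monomial of $\det(A)$ is squarefree and uses each row index and each column index exactly once.

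From this observation, the contraction $h \circ \det(A)$ is obtained by collecting those terms of $\det(A)$ in which the variables $a_{i_1 j_1},\ldots,a_{i_k j_k}$ all occur, and then deleting these variables. Hence, first, $h \circ \det(A) = 0$ unless the rows $i_1,\ldots,i_k$ are pairwise distinct and the columns $j_1,\ldots,j_k$ are pairwise distinct. When both sets $I=\{i_1,\ldots,i_k\}$ and $J=\{j_1,\ldots,j_k\}$ have cardinality $k$, the surviving terms are precisely those indexed by permutations $\sigma$ with $\sigma(i_\ell)=j_\ell$ for all $\ell$, and summing them yields $\pm\det(A[I^c,J^c])$, a signed $(n-k)\times(n-k)$ minor of $A$; the sign is the Laplace-expansion sign for the choice $(I,J)$ multiplied by the sign of the bijection $i_\ell \mapsto j_\ell$.

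The inclusion $S_k\circ \det(A) \subseteq M_{n-k}(A)$ is then immediate, because every monomial in $S_k$ contracts $\det(A)$ either to $0$ or to a signed $(n-k)\times(n-k)$ minor of $A$. For the reverse inclusion, given any $(n-k)\times(n-k)$ minor $\det(A[I^c,J^c])$ with $|I|=|J|=k$, choose any bijection $I\to J$, form the corresponding monomial in $S_k$, and recover the minor (up to a nonzero scalar sign) by the computation above. Thus $M_{n-k}(A)\subseteq S_k\circ \det(A)$, which completes the proof.

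There is no real obstacle here; the one place requiring attention is bookkeeping the signs in the Laplace identity, but this is standard and only affects scalars, not the spans involved.
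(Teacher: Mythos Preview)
Your proof is correct and follows essentially the same argument as the paper: both establish the nontrivial inclusion $M_{n-k}(A)\subseteq S_k\circ\det(A)$ by contracting $\det(A)$ against a diagonal-type monomial $d_{i_1 j_1}\cdots d_{i_k j_k}$ to recover a prescribed minor up to sign. Your treatment is in fact slightly more explicit than the paper's, since you also justify the ``easy'' inclusion $S_k\circ\det(A)\subseteq M_{n-k}(A)$ by noting that monomials with a repeated row or column index annihilate $\det(A)$.
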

 \begin{proof}
 It is easy to see that 
  \begin{equation*}
S_k\circ(\det(A))\subset M_{n-k}(A) \subset R_{n-k}.
 \end{equation*}
 For the other inclusion, let $M_{\widehat I,\widehat J}(A), I=(i_1,\ldots ,i_k), J=(j_1,\ldots j_k), 1\le i_1\le i_2\le \cdots \le i_k\le n, 1\le j_1\le j_2\le \cdots \le j_k\le n$ be the $(n-k)\times (n-k)$ minor of $A$ one obtains by deleting the $I$ rows and $J$ columns of $A$. Now it is easy to see that $$M_{\widehat{I},\widehat{J}}=\pm (d_{i_1,j_1}\cdot d_{i_2,j_2}\cdots d_{i_k,j_k}) \circ \det (A).$$ Hence $M_{\widehat{I},\widehat{J}}\in S_k \circ (\det(A))$. 
 
 \end{proof}

\vspace{.2in}
\begin{remark}(see \cite{IK}, page 69, Lemma 2.15)\label{remark:introIK}
Let $F \in R$ and $\deg F=j$ and $k \leq j$. Then we have

 \begin{equation}\label{eq:introIK}
{(\mathrm {Ann}(F))}_{k}=\{h \in S_k| h \circ S^{j-k} F=0\}=(\mathrm {Ann} (S^{j-k} F))_k.
 \end{equation}

\end{remark}
 

\vspace{.2in}
\begin{remark}\label{generalnonsense-intro}
By Lemma 1.3 and Remark 1.4 we have
\begin{equation*}
\mathrm {Ann}(\det(A))_k={M_k(A)}^{\perp}.
\end{equation*}
\end{remark}
\vspace{.2in}

\vspace{.2in}

\begin{ex} Let $n=3$,\begin{center}  $ A= \left(%
\begin{array}{ccc}
 
  a_{11} & a_{12} & a_{13} \\
  a_{21} & a_{22} & a_{23} \\
  a_{31} & a_{32} & a_{33} \\

\end{array}%
\right)$.

\end{center}
Let $P_{ij}$ and $M_{ij}$ be respectively the permanent and the determinant corresponding to the entry $a_{ij}$.
\emph{Question}: Does $P_{11}=d_{22}d_{33}+d_{32}d_{23}$ annihilate $\det(A)= a_{11}M_{11}+a_{12}M_{12}+a_{13}M_{13}$? The following computations allow us to answer this.

$P_{11}\circ a_{11}M_{11}= (d_{22}d_{33}+d_{23}d_{32})\circ (a_{11}a_{22}a_{33}-a_{11}a_{23}a_{32})=a_{11}-a_{11}=0.$

$P_{11}\circ a_{12}M_{12}= (d_{22}d_{33}+d_{23}d_{32})\circ (a_{12}a_{21}a_{33}-a_{12}a_{23}a_{31})=0.$

$P_{11}\circ a_{13}M_{13}= (d_{22}d_{33}+d_{23}d_{32})\circ (a_{13}a_{21}a_{32}-a_{13}a_{22}a_{31})=0 .$

Hence $ P_{11}$ annihilates the determinant. \newline It is easy to see that when $n=3$, $ P_{ij}\circ M_{kl}=0$ for each $1\leq i,j,k,l\leq 3$.  So in the case $n=3$ the annihilator of the determinant of a generic matrix certainly contains all its $2 \times 2$ permanents.
\end{ex}

\vspace{.2in}
\begin{ack} I am deeply grateful to my advisor Prof. Anthony Iarrobino whose help, stimulating ideas and encouragement helped me in working on this problem and writing this paper. I am very thankful to Prof. Zach Teitler for suggesting this problem and his helpful comments and also Prof. Aldo Conca and Prof. Larry Smith for their valuable comments and suggestions. I also gratefully acknowledge support in summer 2012 from the Ling-Ma fellowship.  \par\medskip
\end{ack}


\section{The apolar algebras associated to the $n\times n$ generic matrix}
\label{generic}

In this section we determine the annihilator ideals of the determinant and the permanent of a generic $n\times n$ matrix.  In section \ref{sec:Hilbert-generic} we review the dimension of the subspace of  $k\times k$ minors and permanents of an $n\times n$ generic matrix. In section \ref{Generators of the apolar ideal}  we determine the generators of the apolar ideal to the determinant and permanent of a generic matrix.

We continue to employ the notations of section \ref{intro}, so $R=\sf k$$ [ a_{ij}]$ is a polynomial ring and $S=\sf k$$[d_{ij}]$ is the ring of inverse polynomials associated to $R$, and $S$ acts on $R$ by contraction.

\subsection{Hilbert function and dimension of spaces of minors and permanents}
\label{sec:Hilbert-generic}

Denote by $\mathfrak A_A=S/(\mathrm {Ann} (\det (A))$ the \emph{apolar algebra} of the determinant of the matrix $A$. Recall that the Hilbert function of $\mathfrak A_A$ is defined by $H(\mathfrak A_A)_i=\dim_{\sf{k}} (\mathfrak A_A)_{i}$ for all $i=0,1,\ldots \,.$ 
\vspace{0.2in}
\begin{defi}
\label{def:degree}
Let $F$ be a polynomial in $R$, we define the $\deg(\mathrm {Ann}(F))$ to be the length of $S/Ann(F)$.
\end{defi}

The number of the $k \times k$ minors and permanents of a generic $n\times n$ matrix is ${n\choose k }^2$. The $k\times k$ minors form a linearly independent set (\cite{BC} Theorem 5.3 and Remark 5.4), and the $k\times k$ permanents form another linearly independent set. To show the linearly independence of these two sets we choose a term order, for example the diagonal order where the main diagonal term is a Gr\"{o}bner initial term. Now the initial terms give a basis for the two spaces (\cite{LS}, page 197). So the dimension of the space of $k\times k$ minors of an $n \times n$ matrix and the dimension of the space of $k\times k$ permanents of an $n\times n$ matrix are both ${n\choose k }^2$. By Lemma~\ref{lem:intro-gen} and Remark \ref{generalnonsense-intro} we have
\begin{equation}
\label{eq:Hilbert-generic}
H(S/\mathrm {Ann}(\det{A}))_k=H(S/\mathrm {Ann}(Perm A))_k={n\choose k }^2.
\end{equation}
So the length $\dim_{\sf{k}} (\mathfrak A_A)$ satisfies
\begin{equation}
\label{eq:sum-Hilbert-generic}
\dim_{\sf k}(\mathfrak A_A)=\sum_{k=0}^{k=n} {n\choose k }^2={2n\choose n }.
\end{equation}

A combinatorial proof of the Equation \ref{eq:sum-Hilbert-generic} can be found in \cite{ST}, Example 1.1.17.


\subsection{Generators of the apolar ideal}
\label{Generators of the apolar ideal}

In this section we determine the generators of the apolar ideal of the determinant and permanent of a generic matrix. 
\vspace{0.2in}
\begin{Notation} For a generic $n\times n$ matrix $A=(a_{ij})$, the permanent of $A$ is a polynomial of degree $n$ defined as follows:

\begin{equation*} 
\mathrm{Per}(A)=\sum _ {\sigma \in S_n}  \Pi a_{i,\sigma(i)}
\end{equation*}

\end{Notation}

\begin{lem} \label{lem:generic-1}
Let $A=(a_{ij})$ be a generic $n\times n$ matrix. Then each $2\times 2$ permanent of $D=(d_{ij})$ annihilates the determinant of $A$.
\end{lem}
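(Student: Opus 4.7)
The plan is to invoke Remark \ref{generalnonsense-intro}, which identifies $\mathrm{Ann}(\det A)_2$ with $M_2(A)^\perp$. So it suffices to show that the pairing of an arbitrary $2\times 2$ permanent of $D$ against an arbitrary $2\times 2$ minor of $A$ is zero. Fix therefore a $2\times 2$ permanent
\begin{equation*}
P = d_{ij}d_{kl} + d_{il}d_{kj}, \qquad i\neq k,\ j\neq l,
\end{equation*}
of $D$, and a $2\times 2$ minor
\begin{equation*}
M = a_{rt}a_{su} - a_{ru}a_{st}, \qquad r\neq s,\ t\neq u,
\end{equation*}
of $A$. I will show $P\circ M = 0$ by a case analysis on how the index sets $\{i,k\},\{j,l\}$ of $P$ meet the index sets $\{r,s\},\{t,u\}$ of $M$.

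First, if $\{i,k\}\neq\{r,s\}$ or $\{j,l\}\neq\{t,u\}$, then at least one of the row or column variables appearing in $P$ is absent from every monomial of $M$; the contraction rule then kills each of the four cross products and the total pairing is zero. This is the easy case and it handles all situations in which the two $2\times 2$ submatrices do not sit in exactly the same rows and columns.

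The substantive case is $\{i,k\}=\{r,s\}$ and $\{j,l\}=\{t,u\}$. Relabeling indices we may assume $r=i$, $s=k$, $t=j$, $u=l$, so that
\begin{equation*}
M = a_{ij}a_{kl} - a_{il}a_{kj}.
\end{equation*}
Expanding $P\circ M$ by linearity yields four terms. By the contraction rule, a product $d_{\alpha\beta}d_{\gamma\delta}$ (with $(\alpha,\beta)\neq(\gamma,\delta)$) annihilates a squarefree product $a_{\mu\nu}a_{\rho\sigma}$ unless the index pairs match as sets, in which case the result is $1$. Thus $d_{ij}d_{kl}\circ a_{ij}a_{kl}=1$ and $d_{il}d_{kj}\circ a_{il}a_{kj}=1$, while the two cross contractions vanish. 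Assembling these with the signs in $M$ and $P$ gives
\begin{equation*}
P\circ M = (+1)(+1) + (+1)(-1) = 0.
\end{equation*}

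This covers every configuration, so every $2\times 2$ permanent of $D$ annihilates every $2\times 2$ minor of $A$, hence by Remark \ref{generalnonsense-intro} lies in $\mathrm{Ann}(\det A)$. The main (and only) thing to be careful about is the bookkeeping in the matching case: the symmetric $(+,+)$ structure of the permanent is exactly what is needed to cancel against the antisymmetric $(+,-)$ structure of the minor, so the result is really a consequence of the ``sign twist'' between permanents and determinants rather than of any deeper phenomenon.
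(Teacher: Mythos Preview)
Your proof is correct, but it takes a different route from the paper's. The paper argues directly on the full expansion $\det(A)=\sum_{\sigma\in S_n}\mathrm{sgn}(\sigma)\prod a_{i,\sigma(i)}$: it observes that only terms containing $a_{ij}a_{kl}$ or $a_{il}a_{kj}$ survive the contraction, and then pairs each such term with the one obtained by composing the permutation with the transposition $(j,l)$, which has opposite sign, so everything cancels. You instead invoke Remark~\ref{generalnonsense-intro} to reduce to the pairing with $2\times 2$ minors and do a localized $2\times 2$ computation.

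Your argument is cleaner and avoids bookkeeping over $S_n$, at the cost of relying on the infrastructure already set up in Lemma~\ref{lem:intro-gen} and Remarks~\ref{remark:introIK}--\ref{generalnonsense-intro}. In fact, what you have written is essentially a direct proof of Corollary~\ref{cor:generic2by2} (that $\mathcal P_D\circ \mathcal M_A=0$), from which you then deduce the Lemma; the paper proceeds in the opposite order, proving the Lemma first and deriving the Corollary from it. Both directions are valid because Remark~\ref{remark:introIK} makes them equivalent. The paper's approach is more self-contained and makes the cancellation mechanism visible at the level of the full determinant, which foreshadows the pairing-of-terms arguments used later in Proposition~\ref{prop:generic-n}; yours isolates the phenomenon to its essential $2\times 2$ core.
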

\begin{proof} Assume we have an arbitrary $2\times 2$ permanent $d_{ij}d_{kl}+d_{il}d_{kj}$ corresponding to \begin{center}  $ P= \left(%
\begin{array}{cc}
  d_{ij} & d_{il}  \\
  d_{kj} & d_{kl} \\
  
\end{array}%
\right)$

\end{center}
Reacll that $\det(A)=\sum _ {\sigma \in S_n} Sgn(\sigma) \Pi a_{i,\sigma(i)}$. There are $n!$ terms in the expansion of the determinant. If a  term does not contain the monomial $a_{ij} a_{kl}$ or the monomial $a_{il} a_{kj}$ then the result of the action of the permanent $d_{ij}d_{kl}+d_{il}d_{kj}$ on it will be zero. There are $(n-2)!$ terms which contain the monomial $a_{ij} a_{kl}$ and $(n-2)!$ terms which contain the monomial $a_{il} a_{kj}$. So assume we have a permutation $\sigma_{1}$ of $n$ objects having $a_{ij}$ and $ a_{kl}$ respectively in it's $i-th$ and $k-th$ place. Corresponding to $\sigma_{1}$ we also have a permutation $\sigma_{2}=\tau \sigma_{1}$, 
 where $\tau=(j,l)$ is a transposition and $sgn (\sigma_{2})=sgn (\tau\sigma_{1})=-sgn( \sigma_{1})$. Thus corresponding to each positive term in the determinant which contains the monomial $a_{ij} a_{kl}$ or the monomial $a_{il} a_{kj}$ we have the same term with the negative sign, thus the resulting action of the permanent $d_{ij}d_{kl}+d_{il}d_{kj}$ on $\det (A)$ is zero. 
\end{proof}
\vspace{.2in}
\begin{defi} \label{def:generic-spaces}
Let $A=(a_{ij})$ and $D=(d_{ij})$ be two generic matrices with entries  in the polynomial ring $R=\sf k$$[a_{ij}]$, and in the ring of differential operators $S=\sf k$$[d_{ij}]$, respectively. Let $\{\mathcal P_A\}$, $\{\mathcal M_A\}$, $\{\mathcal P_D\}$ and $\{\mathcal M_D\}$ denote the set of all $2\times 2$ permanents and the set of all $2 \times 2$ minors of $A$ and $D$, respectively. And let $\mathcal P_A$ , $\mathcal M_A=M_2(A)$, $\mathcal P_D$ and $\mathcal M_D=M_2(D)$ denote the spaces they span, respectively. 
\end{defi}
\vspace{.2in}
\begin{cor}\label{cor:generic2by2}
Each $2\times 2$ permanent of $D$ annihilates $\mathcal M_A$

\end{cor}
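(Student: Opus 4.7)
The plan is to derive this as an almost immediate consequence of Lemma \ref{lem:generic-1} (which handles the determinant itself) together with Lemma \ref{lem:intro-gen} (which realizes $\mathcal M_A$ inside the $S$-orbit of $\det(A)$). The guiding idea is that annihilating $\det(A)$ in degree $2$ is the same thing as annihilating every element of $M_2(A)$, because $M_2(A)$ is precisely the image $S_{n-2}\circ \det(A)$, and contraction is a module action.

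Concretely, I would first fix an arbitrary $2\times 2$ permanent $p = d_{ij}d_{kl}+d_{il}d_{kj} \in S_2$. By Lemma \ref{lem:intro-gen} applied with $k = n-2$, every element $f \in \mathcal M_A = M_2(A)$ has the form $f = h\circ \det(A)$ for some $h \in S_{n-2}$. Next I would use the fact that $R$ is an $S$-module under contraction, so the action satisfies $(ph)\circ \det(A) = p\circ(h\circ \det(A)) = h\circ(p\circ \det(A))$. Applying Lemma \ref{lem:generic-1} gives $p\circ \det(A) = 0$, and therefore $p\circ f = h\circ 0 = 0$. Since $f$ was an arbitrary element of $\mathcal M_A$, this shows $p \in \mathcal M_A^\perp$, which is what the corollary asserts.

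Alternatively, one can package the argument by invoking Remark \ref{generalnonsense-intro}, which states $\mathrm{Ann}(\det(A))_2 = M_2(A)^\perp = \mathcal M_A^\perp$; Lemma \ref{lem:generic-1} then places the $2\times 2$ permanents directly in $\mathcal M_A^\perp$. Either formulation reduces the corollary to a one-line consequence of previously established facts. There is essentially no obstacle to this proof — the only subtlety worth mentioning in the write-up is verifying the associativity identity $(ph)\circ F = p\circ(h\circ F)$, which is just the module axiom for the contraction action as set up in the introduction.
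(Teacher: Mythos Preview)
Your proof is correct and follows essentially the same approach as the paper: the paper invokes Remark~\ref{remark:introIK} (that $(\mathrm{Ann}\,F)_2=(\mathrm{Ann}(S_{n-2}\circ F))_2$) together with Lemma~\ref{lem:intro-gen} to pass from $\mathcal P_D\circ\det(A)=0$ to $\mathcal P_D\circ\mathcal M_A=0$, which is exactly what you unpack via the module-action identity $p\circ(h\circ\det(A))=h\circ(p\circ\det(A))$. Your alternative formulation via Remark~\ref{generalnonsense-intro} is likewise equivalent, since that remark is itself just the combination of Lemma~\ref{lem:intro-gen} and Remark~\ref{remark:introIK}.
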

\begin{proof} By Lemma  \ref{lem:generic-1}, $P_D\circ \det (A)=0$. Let $F=\det (A)$. 
We have $$ (\mathrm {Ann} F)_2= (\mathrm {Ann}(S_{j-2}\circ F))_2.$$ Hence
 $$\mathcal P_D \circ \det (A)=0 \Longleftrightarrow  \mathcal P_D \circ S_{j-2}(\det(A))=0  \Longleftrightarrow \mathcal P_D\circ \mathcal M_A=0.$$
\end{proof}

We also know that the square of an element, or any product of two or more elements of the same row or column of $D$ annihilates $\det(A)$.

\vspace{.2in}
\begin{defi} \label{def:generic-acceptable} A monomial in the $n^2$ variables of the ring $S=\sf k$$[d_{ij}]$ is \emph{acceptable}, if it is square free and has no two variables from the same row or column of $D$. A polynomial is acceptable if it can be written as the sum of acceptable monomials.
\end{defi}
We denote by $<X>$ the $\sf{k}$-vector space span of the set $X$.

\vspace{.2in}
\begin{lem}\label{lem:generic-acceptables}
 $\mathcal P_D\oplus \mathcal M_D=<$degree 2 acceptable polynomials in $S>$. \end{lem}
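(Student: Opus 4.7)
My plan is to reduce everything to a direct-sum decomposition indexed by pairs of rows and pairs of columns. First, I would unpack the definition: a degree-$2$ acceptable monomial is necessarily of the form $d_{ij}d_{kl}$ with $i\ne k$ and $j\ne l$. Pick such an unordered pair of rows $\{i,k\}$ with $i<k$ and an unordered pair of columns $\{j,l\}$ with $j<l$; then the only acceptable monomials involving these four positions are $d_{ij}d_{kl}$ and $d_{il}d_{kj}$. So the space of acceptable degree-$2$ polynomials decomposes as an internal direct sum
\begin{equation*}
\langle \text{deg 2 acceptables}\rangle \;=\; \bigoplus_{\substack{i<k\\ j<l}} \langle d_{ij}d_{kl},\, d_{il}d_{kj}\rangle,
\end{equation*}
of dimension $2\binom{n}{2}^{2}$.

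Next, I would observe that for each index quadruple $(i<k,\, j<l)$ the corresponding $2\times 2$ minor of $D$ is $d_{ij}d_{kl}-d_{il}d_{kj}$ and the corresponding $2\times 2$ permanent of $D$ is $d_{ij}d_{kl}+d_{il}d_{kj}$; both lie in the summand $\langle d_{ij}d_{kl},\, d_{il}d_{kj}\rangle$, and since $\operatorname{char}{\sf k}\ne 2$ they form a basis of it. Taking the direct sum over all $(i<k,\, j<l)$ shows simultaneously (i) that $\mathcal{M}_D+\mathcal{P}_D$ is contained in the span of acceptable polynomials, (ii) that every acceptable monomial is a ${\sf k}$-linear combination of minors and permanents, and (iii) that $\mathcal{M}_D\cap \mathcal{P}_D=0$, because the minor basis sits in one half of each two-dimensional summand and the permanent basis in the other half (equivalently, the summands themselves are already independent and in each summand the minor and permanent are linearly independent).

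Finally, I would cross-check by dimension count: as recalled in section \ref{sec:Hilbert-generic}, the $\binom{n}{2}^{2}$ minors are linearly independent and so are the $\binom{n}{2}^{2}$ permanents, so $\dim(\mathcal{M}_D\oplus\mathcal{P}_D)=2\binom{n}{2}^{2}$, which matches the dimension of the acceptable span computed above, completing the argument.

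I do not expect a real obstacle here; the only things to be careful about are (a) invoking $\operatorname{char}{\sf k}\ne 2$ when inverting the $\pm$ combinations of $d_{ij}d_{kl}$ and $d_{il}d_{kj}$, and (b) making explicit that the linear independence of $\{\mathcal{M}_A\}$ and of $\{\mathcal{P}_A\}$ transfers verbatim to $\{\mathcal{M}_D\}$ and $\{\mathcal{P}_D\}$ because $A$ and $D$ are both generic matrices of the same size.
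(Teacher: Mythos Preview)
Your proof is correct and is essentially the same argument as the paper's: both use the $\operatorname{char}{\sf k}\ne 2$ averaging $d_{ij}d_{kl}=\tfrac{1}{2}\big((d_{ij}d_{kl}-d_{il}d_{kj})+(d_{ij}d_{kl}+d_{il}d_{kj})\big)$ to get the inclusion and then a dimension count to obtain the direct sum. Your block-by-block decomposition over $(\{i,k\},\{j,l\})$ makes the count $2\binom{n}{2}^{2}$ immediate, whereas the paper reaches the same number by computing $\dim S_2-\dim\mathcal U_D$; the underlying idea is the same.
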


\begin{proof} Let $d_{ij}d_{kl}$ be an arbitrary acceptable monomial of degree 2. Since $\mathrm{char} (\sf k)\not=2$ we have:
\begin{equation*}
d_{ij}d_{kl}=1/2((d_{ij}d_{kl}-d_{ij}d_{kl})+(d_{ij}d_{kl}+d_{ij}d_{kl})).
\end{equation*}

By Equation \ref{eq:Hilbert-generic}

\begin{equation*}
\dim \mathcal P_D=\dim \mathcal M_D={n\choose2}^2. 
\end{equation*}

Let $\Psi=<$degree 2 acceptable polynomials in $S>$. Then

\begin{equation*}
\dim\Psi=\dim S_2-\dim \mathcal U_D= {n^2+1\choose 2}-(n^2+{n\choose 2}(2n)).
\end{equation*}

So we have

\begin{equation*}
\dim(\mathcal P_D+ \mathcal M_D)={n^2+1\choose 2}-(n^2+{n\choose 2}(2n))=\dim \mathcal P_D+\dim  \mathcal M_D.
\end{equation*}

Hence $\mathcal P_D\cap \mathcal M_D=0$.

\end{proof}
Denote the space of all unacceptable polynomials of degree 2 by $\mathcal U_D$. We have shown that $(\mathcal P_D+\mathcal U_D)\circ \mathcal M_A=0$ so $\mathcal P_D+\mathcal U_D \subset \mathrm {Ann} (\mathcal M_A) $. Then Equation \ref{eq:Intro-dim} implies $$\dim_{\sf k} (\mathrm {Ann}( \mathcal M_A))_2=\dim_{\sf k} S_2- \dim_{\sf k} \mathcal M_A.$$ 

\begin{lem}\label{lem:generic-pre1}
$ \mathrm {Ann} (\mathcal M_A)\cap S_2 = \mathcal P_D+\mathcal U_D$.
\end{lem}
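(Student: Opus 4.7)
The plan is to prove equality by comparing dimensions, since one containment is already in place.

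First, I would record the containment $\mathcal{P}_D + \mathcal{U}_D \subset \mathrm{Ann}(\mathcal{M}_A)\cap S_2$ that the text has just established: Corollary \ref{cor:generic2by2} gives $\mathcal{P}_D \circ \mathcal{M}_A = 0$, and $\mathcal{U}_D$ (squares or products of two variables lying in a common row or column of $D$) trivially annihilates every $2\times 2$ minor of $A$ because such a product contracts any degree-$2$ monomial in $R$ to $0$ unless it is of ``acceptable'' form. So the nontrivial content is the reverse inclusion, and it suffices to match the dimensions on both sides.

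Next, I would observe that the sum $\mathcal{P}_D + \mathcal{U}_D$ is a direct sum. Indeed, every $2\times 2$ permanent $d_{ij}d_{kl}+d_{il}d_{kj}$ with $i\neq k$ and $j\neq l$ is a sum of acceptable monomials, so $\mathcal{P}_D\subset \langle\text{acceptables}\rangle$, while $\mathcal{U}_D$ is the span of the complementary set of monomials. Hence
\[
\dim_{\sf k}(\mathcal{P}_D + \mathcal{U}_D)=\dim_{\sf k}\mathcal{P}_D+\dim_{\sf k}\mathcal{U}_D
=\binom{n}{2}^{2}+\Bigl(n^{2}+2n\binom{n}{2}\Bigr),
\]
using $\dim\mathcal{P}_D=\binom{n}{2}^2$ from Section \ref{sec:Hilbert-generic} and counting the $n^2$ squares together with the $2n\binom{n}{2}$ pairs of distinct variables sharing a row or column of $D$.

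On the other side, Equation \eqref{eq:Intro-dim} applied to $V=\mathcal{M}_A\subset R_2$ yields
\[
\dim_{\sf k}\bigl(\mathrm{Ann}(\mathcal{M}_A)\cap S_2\bigr)=\dim_{\sf k}S_2-\dim_{\sf k}\mathcal{M}_A=\binom{n^2+1}{2}-\binom{n}{2}^{2}.
\]
The final step is the bookkeeping identity
\[
\binom{n^2+1}{2}=2\binom{n}{2}^{2}+n^{2}+2n\binom{n}{2},
\]
which is a direct algebraic check (both sides equal $n^2(n^2+1)/2$). Combining, the two dimensions agree, and together with the containment this forces $\mathrm{Ann}(\mathcal{M}_A)\cap S_2 = \mathcal{P}_D+\mathcal{U}_D$. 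There is no real obstacle here beyond the arithmetic; the key conceptual input is the decomposition of $S_2$ into acceptable and unacceptable monomials and the previous lemma identifying the acceptable part with $\mathcal{P}_D\oplus\mathcal{M}_D$.
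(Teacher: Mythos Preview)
Your proof is correct and follows essentially the same approach as the paper: establish the containment, then match dimensions using Equation~\eqref{eq:Intro-dim}. The only cosmetic difference is that the paper appeals directly to Lemma~\ref{lem:generic-acceptables} (the decomposition $S_2=\mathcal P_D\oplus\mathcal M_D\oplus\mathcal U_D$) to read off $\dim S_2-\dim\mathcal M_A=\dim(\mathcal P_D+\mathcal U_D)$ without writing out the binomial-coefficient identity, whereas you verify that identity numerically; the content is the same.
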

\begin{proof} 



By Lemma \ref{lem:generic-acceptables} we have $\mathcal P_D+ \mathcal M_D$ is complementary to $\mathcal U_D$. So we have 

$$
\dim ((\mathrm {Ann} (\mathcal M_A))_2)=\dim S_2 -\dim \mathcal M_A=\dim  \mathcal P_D+\mathcal U_D.
$$

\end{proof}
 
\begin{Notation}
We define the homomorphism $\xi:R\rightarrow S$ by setting $\xi(a_{ij})=d_{ij}$; for a monomial $v\in R$ we denote by  $\hat{v}=\xi(v)$ the corresponding monomial of $S$.
\end{Notation}
\vspace{.2in}
\begin{remark}\label{remark:main-ann}
Let $f= \sum_{i=1}^{i=k} \alpha_i v_i  \in R_n$ with $\alpha_{i}\in \sf k$ and with $v_i$'s linearly independent monomials. Then we will have:
\begin{equation}\label{eq:main-ann}
\mathrm {Ann}(f) \cap S_n=<\alpha_j \hat{v_1}-\alpha_1\hat{v_j},<v_1,...,v_k>^\perp>,
\end{equation}
where $<v_1,...,v_k>^\perp=\mathrm {Ann}(<v_1,...,v_k>)\cap S_n$.
\end{remark}
\vspace{.2in}

\begin{lem}\label{lem:main-pre}
\begin{equation}\label{eq:main-pre}
(\mathcal P_D+\mathcal U_D)_k \subset \mathrm {Ann}(M_k(A))  \cap S_k.
 \end{equation}
\end{lem}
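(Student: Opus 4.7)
The plan is to convert the statement about $\mathrm{Ann}(M_k(A))$ into one about $\mathrm{Ann}(\det(A))$ via Remark \ref{remark:introIK}, and then exploit the fact that $\mathrm{Ann}(\det(A))$ is an ideal of $S$ so that the degree-$2$ containment $\mathcal P_D + \mathcal U_D \subset \mathrm{Ann}(\det(A))_2$, which has already been established, propagates automatically to the degree-$k$ inclusion claimed.

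In more detail, I would first apply Remark \ref{remark:introIK} with $F = \det(A)$ and $j = n$ to obtain $\mathrm{Ann}(\det(A))_k = \mathrm{Ann}(S_{n-k}\circ \det(A))_k$, and then invoke Lemma \ref{lem:intro-gen} to rewrite $S_{n-k} \circ \det(A) = M_k(A)$. This yields the identity
$$
\mathrm{Ann}(M_k(A)) \cap S_k = \mathrm{Ann}(\det(A))_k,
$$
so it suffices to show $(\mathcal P_D + \mathcal U_D)_k \subset \mathrm{Ann}(\det(A))_k$. Here I would read the subscript $k$ on the left as the degree-$k$ component of the ideal of $S$ generated by $\mathcal P_D + \mathcal U_D \subset S_2$, i.e.\ the $\sf k$-span of products $h \cdot p$ with $h \in S_{k-2}$ and $p$ either a $2 \times 2$ permanent of $D$ or an unacceptable degree-$2$ monomial. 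Because contraction is associative, $\mathrm{Ann}(\det(A))$ is an ideal of $S$: if $p \circ \det(A) = 0$, then $(hp) \circ \det(A) = h \circ (p \circ \det(A)) = 0$ for every $h \in S$. So it remains only to check the degree-$2$ generators lie in $\mathrm{Ann}(\det(A))_2$. The $2 \times 2$ permanents are handled by Lemma \ref{lem:generic-1}, and any unacceptable degree-$2$ monomial annihilates $\det(A)$ because every monomial in the expansion of $\det(A)$ uses at most one variable from each row and each column of $A$, so the contraction of a square $d_{ij}^2$, or of a product $d_{ij} d_{il}$ or $d_{ij} d_{kj}$, kills every term.

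There is no genuine obstacle here: once the subscript on a degree-$2$ subspace is interpreted as "the degree-$k$ piece of the ideal it generates," the lemma is a one-line consequence of the identification $\mathrm{Ann}(M_k(A)) \cap S_k = \mathrm{Ann}(\det(A))_k$ together with the fact that the apolar ideal is closed under multiplication by arbitrary elements of $S$. The only point worth emphasizing is that this bootstrapping is genuinely needed, since in general the apolar ideal of a form is not generated in any single degree, and without the ideal-closure observation the jump from degree $2$ to degree $k$ would not be automatic.
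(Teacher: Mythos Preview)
Your proposal is correct and follows essentially the same route as the paper: both arguments reduce to the identity $\mathrm{Ann}(M_k(A))\cap S_k=\mathrm{Ann}(\det(A))_k$ (via Remark~\ref{remark:introIK} and Lemma~\ref{lem:intro-gen}) and then propagate the known degree-$2$ inclusion $\mathcal P_D+\mathcal U_D\subset\mathrm{Ann}(\det(A))_2$ to degree $k$ using the ideal property of $\mathrm{Ann}(\det(A))$, i.e.\ associativity of contraction. The paper writes this as the chain $S_{k-2}(\mathcal P_D+\mathcal U_D)\circ(S_{n-k}\circ\det(A))=0\Rightarrow (\mathcal P_D+\mathcal U_D)_k\circ M_k(A)=0$, which is exactly your argument in a different order.
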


\begin{proof}
We have:

(1) $ \mathcal P_D\circ \det (A)=0  \Longleftrightarrow  \mathcal P_D\circ S_{n-2}(\det(A))=0 \Longleftrightarrow \mathcal P_D\circ \mathcal M_A=0.$

(2) ($\mathrm {Ann}(\det(A))) \cap S_2= \mathcal P_D+\mathcal U_D \Rightarrow S_{k-2} (\mathcal P_D+\mathcal U_D)\circ (S_{n-k} \circ \det (A))=0$.

$ \Rightarrow S_{k-2}(\mathcal P_D+\mathcal U_D) \circ M_k(A)=0$.

$ \Rightarrow (\mathcal P_D+\mathcal U_D)_k \circ M_k(A)=0$. (By Remark \ref{remark:introIK})

So Equation \ref{eq:main-pre} holds.

\end{proof}

\begin{prop}\label{prop:generic-n} For a generic $n\times n$ matrix $A$ with $n\geq2$, we have
\begin{equation*}
(\mathcal P_D+\mathcal U_D)_n= \mathrm {Ann}(\det (A))  \cap S_n.
 \end{equation*}
\end{prop}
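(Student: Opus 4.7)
The plan is to establish the non-trivial inclusion $\mathrm{Ann}(\det(A)) \cap S_n \subseteq (\mathcal P_D + \mathcal U_D)_n$, since the reverse inclusion is exactly the case $k=n$ of Lemma \ref{lem:main-pre} (noting that $M_n(A)=\langle \det(A)\rangle$). To get the reverse inclusion, I will use Remark \ref{remark:main-ann} to write down an explicit spanning set for $\mathrm{Ann}(\det(A)) \cap S_n$ and realize each spanning vector as a concrete element of $(\mathcal P_D + \mathcal U_D)_n$.

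Write $\det(A) = \sum_{\sigma} \mathrm{sgn}(\sigma)\, v_\sigma$, with $\sigma$ ranging over the permutations of $\{1,\dots,n\}$ and $v_\sigma = \prod_i a_{i,\sigma(i)}$, and set $m_\sigma := \xi(v_\sigma) = \prod_i d_{i,\sigma(i)}$. By Remark \ref{remark:main-ann},
$$\mathrm{Ann}(\det(A)) \cap S_n \;=\; \langle \mathrm{sgn}(\sigma)\, m_e - m_\sigma : \sigma \ne e \rangle \;+\; \langle v_\sigma : \sigma \rangle^{\perp},$$
where $e$ is the identity permutation. Because the contraction pairing is diagonal on monomials, $\langle v_\sigma \rangle^{\perp}$ is the span of those degree-$n$ monomials of $S$ that are not of the form $m_\sigma$; equivalently, the span of the degree-$n$ unacceptable monomials. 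Each such monomial has a degree-$2$ unacceptable factor (a square $d_{ij}^{\,2}$, or a pair of variables in the same row or column), so it lies in $S_{n-2} \cdot \mathcal U_D \subseteq (\mathcal P_D + \mathcal U_D)_n$.

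It remains to show that each generator $\mathrm{sgn}(\sigma)\, m_e - m_\sigma$ lies in $(\mathcal P_D + \mathcal U_D)_n$. The core identity is this: for any permutation $\sigma$ and any pair $i \ne k$, setting $j=\sigma(i)$, $l=\sigma(k)$, and $\sigma' := \sigma \circ (i,k)$, we have
$$m_\sigma + m_{\sigma'} \;=\; \Bigl(\prod_{r \ne i,k} d_{r,\sigma(r)}\Bigr) \cdot (d_{ij} d_{kl} + d_{il} d_{kj}) \;\in\; S_{n-2}\cdot \mathcal P_D,$$
where the factorization produces two acceptable monomials (namely $m_\sigma$ and $m_{\sigma'}$) precisely because $\sigma$ is a bijection, so the $n-2$ other rows and columns outside $\{i,k\}$ and $\{j,l\}$ are used exactly once by the prefactor. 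In the quotient $S_n / (\mathcal P_D + \mathcal U_D)_n$ this reads $m_{\sigma'} \equiv -m_\sigma$, which combined with $\mathrm{sgn}(\sigma')=-\mathrm{sgn}(\sigma)$ yields $m_\sigma \equiv \mathrm{sgn}(\sigma)\, m_e$ by induction on the number of transpositions needed to express $\sigma$, starting from $\sigma = e$. Hence every generator $\mathrm{sgn}(\sigma)\,m_e - m_\sigma$ is in $(\mathcal P_D + \mathcal U_D)_n$, finishing the proof.

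The main obstacle is essentially bookkeeping: one must verify that the prefactor $\prod_{r\ne i,k} d_{r,\sigma(r)}$ genuinely splits the $2\times 2$ permanent into the two acceptable monomials $m_\sigma$ and $m_{\sigma'}$ with no collision, and then organize the transposition induction cleanly. No deeper ingredient is needed; the proposition is, morally, the statement that the $2\times 2$ permanents of $D$ encode exactly the sign relations among the permutation monomials making up $\det(A)$.
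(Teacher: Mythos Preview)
Your proof is correct. Both you and the paper start from Remark~\ref{remark:main-ann} and reduce to showing that each binomial $\mathrm{sgn}(\sigma)\,m_e - m_\sigma$ lies in $(\mathcal P_D + \mathcal U_D)_n$, but the inductive structures differ. The paper inducts on $n$: given a binomial $b_1 - b_2$, it either factors out a common variable and applies the hypothesis to a smaller matrix, or it adds and subtracts an intermediate term sharing a factor with each side, again reducing to smaller~$n$. You instead fix $n$ and induct on the length of a transposition word for $\sigma$, using the single identity $m_\sigma + m_{\sigma\circ(i\,k)} = \bigl(\prod_{r\neq i,k} d_{r,\sigma(r)}\bigr)(d_{ij}d_{kl}+d_{il}d_{kj}) \in S_{n-2}\cdot\mathcal P_D$. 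Your route is somewhat cleaner: it stays inside the fixed ring $S$, avoids the interpolation step, and makes transparent that the $2\times2$ permanents are exactly the Coxeter relations on the permutation monomials modulo sign. The paper's induction on $n$, on the other hand, is what feeds directly into Corollary~\ref{cor:generic-main-k}, where one needs the statement for all $k\le n$ simultaneously; your argument gives only the top degree, so the paper's approach pays off slightly in the subsequent corollary. One small wording point: ``the number of transpositions needed'' is only well-defined up to parity, so you should say you induct along a fixed decomposition $\sigma = \tau_1\cdots\tau_k$ (or on the minimal length); the argument is unaffected.
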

\begin{proof} Using Equation \ref{eq:main-pre} we only need to show
\begin{equation*}
(\mathcal P_D+\mathcal U_D)_n \supset \mathrm {Ann}(\det (A))  \cap S_n.
\end{equation*}
We use induction on $n$. For $n=2$ the equality is easy to see. Next we verify that the proposition holds for the case $n=3$. We need to see that the space of $2\times 2$ permanents of $D$ generates $ \mathrm {Ann} (\det (A))_3/\mathcal U_D$, i.e., $\mathrm {Ann}(M_3(A))_3/\mathcal U_D$. Corresponding to each term in the determinant, there is a permutation of three objects $\sigma$ such that we can write the term as $a_{1\sigma(1)}a_{2\sigma(2)}a_{3\sigma(3)}$. Consider the degree three binomial $b=a_{1\sigma(1)}a_{2\sigma(2)}a_{3\sigma(3)}-a_{1\tau(1)}a_{2\tau(2)}a_{3\tau(3)}$, where $\tau\neq\sigma$. Without loss of generality we can assume that $\sigma$ is the identity, so we consider the binomial $b=a_{11}a_{22}a_{33}-a_{1\tau(1)}a_{2\tau(2)}a_{3\tau(3)}$. If these two monomials have a common variable i.e., $\tau(i)=i$ for some $i=1,2,3$, then the binomial will be of the form $ b= a_{ii} (a_{jj}a_{kk}-a_{jk}a_{kj})$,  $1\leq
i,j,k,l\leq 3$, so we will have $b=a_{ii}M_{ii}$ and, as we have shown previously, $P_{ii}=d_{jj}d_{kk}-d_{jk}d_{kj}$ annihilates it.  Assume that the monomials $a_{11}a_{22}a_{33}$ and $a_{1\tau(1)}a_{2\tau(2)}a_{3\tau(3)}$ do not have any common factor. We can add and subtract another term $a_{1\beta(1)}a_{2\beta(2)}a_{3\beta(3)}$, where $\beta$ is a permutation, such that it will have one
 common factor with $a_{11}a_{22}a_{33}$ and one common factor with $a_{1\tau(1)}a_{2\tau(2)}a_{3\tau(3)}$. By reindexing we can take $\beta(1)=\tau(1)$, $\beta(2)=2$ and then we can determine $\beta(3)$ according to the other two choices. Then by factorizing we get a binomial of the form $a_{ij}M_{ij}+a_{kl}M_{kl}$, where the first term can be annihilated by the permanent of the matrix $D$ corresponding to $d_{ij}$ and the second term can be annihilated by the permanent of the matrix $D$ corresponding to the element $d_{kl}$. So by Equation \ref{eq:main-ann} we are done. For example, if we have the binomial $a_{11}a_{22}a_{33}-a_{13}a_{21}a_{32}$ we can add and subtract the term $a_{11}a_{23}a_{32}$ which has one common factor with $a_{11}a_{22}a_{33}$ and one common factor with $a_{13}a_{21}a_{32}$ so we will get $a_{11}(a_{22}a_{33}-a_{23}a_{32})+ a_{32}(a_{11}a_{23}-a_{13}a_{21})$ which is $a_{11}M_{11}+a_{32}M_{32}$. And as we have shown before it can be annihilated by the space of $2\times 2$ permanents. So by Equation~\ref{eq:main-ann} we are done.

When $n$ is larger than $3$ then by the induction assumption we can assume that the proposition holds for all $k \leq n-1$. By the Remark \ref{remark:main-ann} it is enough to show that if $b$ is a binomial of the form Equation \ref{eq:main-ann}, in $\mathrm {Ann}(\det (A))  \cap S_n$, then  $b\in (\mathcal P_D+\mathcal U_D)_n$. Assume $b=b_1+b_2$ is of degree $n$. If the two terms, $b_1$ and $b_2$ are monomials in $S$ and have a common factor $l$, i.e., $b_1=la_1$ and $b_2=la_2$, then $b=l(a_1+a_2)$ where $a_1$ and $a_2$ are of degree at most $n-1$. So by  the induction assumption the proposition holds for the binomial $a_1+a_2$, i.e. $a_1+a_2 \in (\mathcal P_D+\mathcal U_D)_{n-1}$.  Hence we have 
\begin{equation*}
b=l(a_1+a_2)\in l(\mathcal P_D+\mathcal U_D)_{n-1}\subset (\mathcal P_D+\mathcal U_D)_n.
\end{equation*}
If the two terms, $b_1$ and $b_2$ do not have any common factor then with the same method as above we can rewrite the binomial $b$ by adding and subtracting a term of the determinant, $m$ of degree $n$, which has a common factor $m_1$ with $b_1$ and a common factor $m_2$ with $b_2$. Then we will have

\begin{equation*}
b_1+b_2= b_1+m+b_2-m=m_1(c_1+m')+m_2(c_2-m''),
\end{equation*}
where $b_1=m_1c_1$, $m=m_1m'=m_2m''$ and $b_2=m_2c_2$. Since $c_1+m'$ and $c_2-m''$ are of degree at most $n-1$, the induction assumption yields
\begin{equation*}
b_1+b_2=m_1(c_1+m')+m_2(c_2-m'')\in(\mathcal P_D+\mathcal U_D)_n.
\end{equation*}
This completes the induction step and hence the proof of the proposition.

\end{proof}
\begin{cor} \label{cor:generic-main-k} For a generic $n\times n$ matrix $A$ and each integer $k$, $1\leq k \leq n$, we have 
\begin{equation*}
(\mathcal P_D+\mathcal U_D)_k= \mathrm {Ann}(\det (A)) \cap S_k.
 \end{equation*}
 We also have  $(\mathcal U_D)_{n+1}=S_{n+1}$.
\end{cor}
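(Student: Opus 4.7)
The plan is to establish the equality by matching dimensions. Lemma \ref{lem:main-pre} combined with Remark \ref{generalnonsense-intro} already provides the inclusion $(\mathcal P_D+\mathcal U_D)_k\subset\mathrm{Ann}(\det A)\cap S_k$, and Equation \ref{eq:Hilbert-generic} supplies $\dim_{\sf k}(\mathrm{Ann}(\det A)\cap S_k)=\dim S_k-\binom{n}{k}^2$. Writing $I=(\mathcal P_D+\mathcal U_D)\cdot S$ for the ideal generated in degree two, the inclusion gives $\dim(S/I)_k\ge\binom{n}{k}^2$, so equality of ideals will follow from the reverse bound $\dim(S/I)_k\le\binom{n}{k}^2$.

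To produce a spanning set of $(S/I)_k$ of the right size, I would reduce an arbitrary degree-$k$ monomial $d_{i_1 j_1}\cdots d_{i_k j_k}$ modulo $I$. If two of its factors share a row of $D$, share a column, or coincide---that is, if the monomial is unacceptable---then it already lies in $\mathcal U_D\cdot S_{k-2}\subset I$ and vanishes modulo $I$. Otherwise, after reordering we may take $i_1<\cdots<i_k$ with the $j_a$ pairwise distinct. The key observation is that for any two indices $a<b$, the $2\times 2$ permanent
\[
d_{i_a j_a}d_{i_b j_b}+d_{i_a j_b}d_{i_b j_a}\in\mathcal P_D\subset I,
\]
and multiplying by the remaining $k-2$ factors shows that transposing the column indices attached to rows $i_a$ and $i_b$ flips the sign of the monomial modulo $I$. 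Iterating, for every permutation $\sigma$ of $\{1,\ldots,k\}$,
\[
d_{i_1 j_{\sigma(1)}}\cdots d_{i_k j_{\sigma(k)}}\equiv \mathrm{sgn}(\sigma)\,d_{i_1 j_1}\cdots d_{i_k j_k}\pmod{I}.
\]
Summing these identities weighted by $\mathrm{sgn}(\sigma)$ turns the left-hand side into $\pm M_{I_0,J_0}(D)$---the $k\times k$ minor of $D$ on rows $\{i_1,\ldots,i_k\}$ and columns $\{j_1,\ldots,j_k\}$, the sign accounting for sorting the $j_a$'s---and the right-hand side into $k!$ times the original monomial. Hence every acceptable monomial is a scalar multiple of some size-$k$ minor of $D$ modulo $I$, and the $\binom{n}{k}^2$ such minors span $(S/I)_k$, yielding the required bound.

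For the last assertion, every monomial in $S_{n+1}$ is a product of $n+1$ variables $d_{ij}$; since $D$ has only $n$ rows, pigeonhole forces either two of those factors to share a row or one variable to be repeated, so every such monomial is unacceptable and lies in $(\mathcal U_D)_{n+1}$. Thus $(\mathcal U_D)_{n+1}=S_{n+1}$.

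The main technical point is the sign-flipping argument; it is essentially formal modulo $I$ once the $2\times 2$ permanent relations are available, but needs some care to verify that acceptability is preserved while the column indices are permuted. The remaining pieces---the dimension comparison and the pigeonhole---are short. It is worth noting that this approach does not rely on Proposition \ref{prop:generic-n}; in fact it recovers that proposition as the special case $k=n$.
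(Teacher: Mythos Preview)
Your argument is correct and takes a genuinely different route from the paper's. The paper first proves Proposition~\ref{prop:generic-n} (the case $k=n$) by an induction on $n$ that manipulates binomials in $\mathrm{Ann}(\det A)$, and then deduces the corollary for general $k$ by restricting to each $k\times k$ submatrix and intersecting the resulting annihilators. Your approach bypasses this entirely: the sign--flipping relation coming from the $2\times 2$ permanents shows directly that all $k!$ acceptable monomials with a fixed row set and column set are $\pm$ one another in $S/I$, so each such class contributes at most one dimension, giving $\dim(S/I)_k\le\binom{n}{k}^2$. This is shorter and more transparent, and as you observe it yields Proposition~\ref{prop:generic-n} as the special case $k=n$ rather than depending on it.

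One small point: when you pass from the congruence to ``every acceptable monomial is a scalar multiple of the minor modulo $I$'' by dividing by $k!$, this step fails when $\mathrm{char}\,{\sf k}=p$ with $2<p\le k$, which the paper's standing hypotheses allow. The fix is immediate: the sign--flipping relations already show that the acceptable monomials with a given row and column set span a line in $(S/I)_k$, so the bound $\dim(S/I)_k\le\binom{n}{k}^2$ follows without ever invoking $k!$ or naming the minor as the representative.
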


\begin{proof}
Using equation \ref{eq:main-pre} we only need to show that 
\begin{equation*}
\mathrm{Ann}(\det (A)) \cap S_k \subset (\mathcal P_D+\mathcal U_D)_k.
\end{equation*}  

By Lemma \ref{lem:intro-gen} and Remark \ref{remark:introIK} we have
\begin{equation*}
(\mathrm{Ann}(\det (A)))_k=(\mathrm{Ann}(S_{n-k}\circ (\det (A))))_k=(\mathrm{Ann}(M_{k}(A)))_k
\end{equation*}

If we label the $k\times k$ minors of $A$ by $f_1,...,f_s$ we have

\begin{equation*}
(\mathrm{Ann}(M_{k}(A)))_k=\mathrm{Ann}(<f_1,...,f_s>)_k=(\bigcap_{i=1}^{i=s}(\mathrm{Ann}(f_i))_k
\end{equation*}

But for each $f_i$ if we denote the ring of variables of $f_i$ by $R^i$, then by Proposition \ref{prop:generic-n} we have
\begin{equation*}
(\mathcal P^i_D+\mathcal U^i_D)_{k}=\mathrm{Ann}(f_i)\cap S^i_k.
\end{equation*}

Hence

\begin{equation*}
\mathrm{Ann}(\det(A))\cap S_{k}\subset (\mathcal P_D+\mathcal U_D)_k.
\end{equation*}  

Finally, every monomial of degree larger than $n$ will be unacceptable. So we have  $(\mathcal U_D)_{n+1}=S_{n+1}$.

\end{proof}

\vspace{.2in}

\begin{thm}\label{thm:main-generic-determinant} Let A be a generic $n\times n$ matrix. Then the apolar ideal $\mathrm{Ann}(\det (A))\subset S $ is the ideal $(\mathcal P_D+\mathcal U_D)$, and is generated in degree two.
\par
\end{thm}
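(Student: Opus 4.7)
The plan is to read this theorem as a packaging of Corollary \ref{cor:generic-main-k}. Let $I\subset S$ denote the ideal generated by the degree-two space $\mathcal P_D+\mathcal U_D$. The theorem amounts to two claims: (a) $I=\mathrm{Ann}(\det(A))$, and (b) $I$ is generated in degree two. Claim (b) is tautological once (a) is established, since $\mathcal P_D+\mathcal U_D\subset S_2$ by construction. So the task reduces to proving the graded equality $I_k=\mathrm{Ann}(\det(A))\cap S_k$ for every $k\ge 0$.

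I would handle this degree by degree in three ranges. In the small range $k=0,1$, both sides vanish: $\mathrm{Ann}(\det(A))\cap S_0=0$ because $1\circ\det(A)=\det(A)\neq 0$, $\mathrm{Ann}(\det(A))\cap S_1=0$ because each $d_{ij}\circ\det(A)$ is (up to sign) the $(i,j)$-cofactor of $A$ and therefore nonzero, and $I_0=I_1=0$ since $I$ is generated in degree two. In the middle range $2\le k\le n$, the degree-$k$ part $I_k=S_{k-2}\cdot(\mathcal P_D+\mathcal U_D)$ equals $\mathrm{Ann}(\det(A))\cap S_k$ directly by Corollary \ref{cor:generic-main-k}.

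The remaining range $k\ge n+1$ requires only the pigeonhole observation already recorded in Corollary \ref{cor:generic-main-k}. On one hand, since $\det(A)\in R_n$, every $h\in S_k$ with $k>n$ satisfies $h\circ\det(A)=0$, so $\mathrm{Ann}(\det(A))\cap S_k=S_k$. On the other hand, any degree-$(n+1)$ monomial $d_{i_1j_1}\cdots d_{i_{n+1}j_{n+1}}$ has either a repeated variable or two factors sharing a row or a column (by pigeonhole on the $n$ possible rows), so it is divisible by an unacceptable quadratic monomial; hence $(\mathcal U_D)_{n+1}=S_{n+1}$ as stated in the corollary, and multiplying by $S_{k-n-1}$ gives $I_k=S_k$ for every $k\ge n+1$.

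Combining the three ranges yields $I_k=\mathrm{Ann}(\det(A))\cap S_k$ for all $k$, hence $I=\mathrm{Ann}(\det(A))$ as graded ideals, which is claim (a); claim (b) then follows because $I$ was defined as an ideal generated in degree two. There is no serious obstacle at this stage — the real content is concentrated in Proposition \ref{prop:generic-n} (the base case $k=n$) and its promotion to arbitrary minors in Corollary \ref{cor:generic-main-k}; the only new ingredient here is the elementary pigeonhole estimate that cleans up the high-degree tail.
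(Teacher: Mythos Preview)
Your proof is correct and is exactly the paper's approach: the paper's own proof of Theorem~\ref{thm:main-generic-determinant} is the one-line remark that it follows from Proposition~\ref{prop:generic-n} and Corollary~\ref{cor:generic-main-k}, and you have simply unpacked that degree by degree. One small slip: for $k=1$ it is not enough that each individual $d_{ij}\circ\det(A)$ is nonzero; you need that the $(n-1)\times(n-1)$ cofactors are linearly \emph{independent} (which is established in Section~\ref{sec:Hilbert-generic}), or you can simply note that $k=1$ is already included in the range $1\le k\le n$ of Corollary~\ref{cor:generic-main-k}.
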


 \begin{proof} This follows directly from the Proposition \ref{prop:generic-n} and Corollary \ref{cor:generic-main-k}.
 \end{proof}
 \vspace{.2in}
 
\begin{thm}\label{thm:main-generic-permanent} Let A be a generic $n\times n$ matrix. Then the apolar ideal $\mathrm{Ann}(\mathrm{Per} (A))\subset S$ to $\mathrm{Per}(A)\in R$ is the ideal $(\mathcal M_D+\mathcal U_D)$, generated in degree two.

\end{thm}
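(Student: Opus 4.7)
The plan is to mirror the proof of Theorem~\ref{thm:main-generic-determinant} with the roles of $\mathcal P_D$ and $\mathcal M_D$ interchanged throughout, since the permanent differs from the determinant only by the absence of sign. First I would establish the analog of Lemma~\ref{lem:generic-1}: each $2\times 2$ \emph{minor} $d_{ij}d_{kl}-d_{il}d_{kj}$ of $D$ annihilates $\mathrm{Per}(A)$. The combinatorial mechanism is the same as in the determinant case, but now the terms in $\mathrm{Per}(A) = \sum_{\sigma\in S_n}\prod a_{i,\sigma(i)}$ carry no sign, so the pair $\sigma_1,\sigma_2=\tau\sigma_1$ (with $\tau=(j,l)$) produces two positive contributions that cancel against the minus sign built into the minor. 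Likewise every unacceptable monomial in $\mathcal U_D$ kills $\mathrm{Per}(A)$ because each term of $\mathrm{Per}(A)$ is squarefree with one variable per row and per column. This gives the containment $\mathcal M_D+\mathcal U_D\subset \mathrm{Ann}(\mathrm{Per}(A))\cap S_2$.

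Next I would obtain equality in degree $2$ by a dimension count that exactly parallels Lemma~\ref{lem:generic-pre1}. Equation~\eqref{eq:Hilbert-generic} gives
\begin{equation*}
\dim_{\sf k}(\mathrm{Ann}(\mathrm{Per}(A)))_2=\dim_{\sf k} S_2-\binom{n}{2}^2=\dim_{\sf k} S_2-\dim_{\sf k} \mathcal P_D,
\end{equation*}
while Lemma~\ref{lem:generic-acceptables} yields $\dim_{\sf k}(\mathcal M_D+\mathcal U_D)=\dim_{\sf k} S_2-\dim_{\sf k}\mathcal P_D$ since $\mathcal P_D\oplus\mathcal M_D$ is complementary to $\mathcal U_D$. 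Hence $\mathrm{Ann}(\mathrm{Per}(A))\cap S_2=\mathcal M_D+\mathcal U_D$.

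Then I would push to higher degree by reproducing Lemma~\ref{lem:main-pre} and Proposition~\ref{prop:generic-n} verbatim, with $\mathcal M_D$ in place of $\mathcal P_D$ and with $\mathrm{Per}(A)$ in place of $\det(A)$. Concretely, $S_{k-2}(\mathcal M_D+\mathcal U_D)\circ(S_{n-k}\circ\mathrm{Per}(A))=0$ gives the inclusion $(\mathcal M_D+\mathcal U_D)_k\subset\mathrm{Ann}(\mathrm{Per}(A))\cap S_k$. For the reverse inclusion I would run the same induction on $n$, using Remark~\ref{remark:main-ann} to reduce to killing binomials $b=b_1+b_2$ of degree $n$ in $\mathrm{Ann}(\mathrm{Per}(A))\cap S_n$. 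The base case $n=3$ is where the minor/permanent swap shows up most concretely: the sub-permanent $\mathrm{Per}_{ii}=a_{jj}a_{kk}+a_{jk}a_{kj}$ attached to the diagonal entry $a_{ii}$ is annihilated by the $2\times 2$ minor $d_{jj}d_{kk}-d_{jk}d_{kj}$, which plays the role that $P_{ii}$ played for the determinant. The add--subtract trick that rewrites a two-term difference (or sum) with disjoint supports as a combination of expressions of the shape $a_{ij}\mathrm{Per}_{ij}$ works identically, since the expansion of the permanent along a row or column is also multilinear. The inductive step, factoring out a common factor $l$ when $b_1$ and $b_2$ share one and otherwise introducing an intermediate monomial $m$ from $\mathrm{Per}(A)$, goes through word for word.

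Finally, I would conclude by invoking the analog of Corollary~\ref{cor:generic-main-k}: writing $\mathrm{Per}(A)=\sum a_{ij}\mathrm{Per}_{ij}$ and using Remark~\ref{remark:introIK} to identify $(\mathrm{Ann}(\mathrm{Per}(A)))_k$ with $\bigcap_i \mathrm{Ann}(\mathrm{Per}^i(A))\cap S^i_k$ on each sub-permanent $\mathrm{Per}^i$; the degree-$n_i$ statement for each sub-permanent then propagates up to every $k\le n$, and the observation $(\mathcal U_D)_{n+1}=S_{n+1}$ handles degrees beyond $n$. Taken together these give $\mathrm{Ann}(\mathrm{Per}(A))=(\mathcal M_D+\mathcal U_D)$, generated in degree two. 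The only place where I expect a subtlety is the sign bookkeeping in the $n=3$ base case and in verifying that the add--subtract trick indeed produces expressions of the form $a_{ij}\mathrm{Per}_{ij}$ (not $a_{ij}M_{ij}$); once the correct signs are lined up, the structural argument is the same as in the determinant proof.
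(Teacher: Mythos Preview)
Your proposal is correct and is exactly the approach the paper takes: its proof of Theorem~\ref{thm:main-generic-permanent} is the single sentence that the result follows from the proofs of Proposition~\ref{prop:generic-n} and Corollary~\ref{cor:generic-main-k} by interchanging determinants and permanents. You have simply unpacked that interchange in detail, with the correct swap $\mathcal P_D\leftrightarrow\mathcal M_D$ and the correct sign bookkeeping.
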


\begin{proof}
The proof follows directly from the proof of the Proposition \ref{prop:generic-n} and Corollary \ref{cor:generic-main-k}, by interchanging the determinants and the permanents.
\end{proof}
\vspace{.2in}
\begin{cor} Let $A=(a_{ij})$ be an $m\times n$ matrix where $n\ge m$. Let $N$ denote the space generated by all $m\times m$ minors of $A$. Then $\mathrm{Ann}(N)$ is generated in degree two by all $2\times 2$ permanents of $A$ and the degree two unacceptable monomials.

\end{cor}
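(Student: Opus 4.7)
The strategy is to reduce the rectangular case to the square case of Theorem~\ref{thm:main-generic-determinant}. Write $N=\langle \det A_J : J\subset\{1,\ldots,n\},\ |J|=m\rangle$ where $A_J$ is the $m\times m$ submatrix of $A$ on the columns indexed by $J$, and let $\mathcal P_D,\mathcal U_D\subset S_2$ denote the spans of the $2\times 2$ permanents of the $m\times n$ dual matrix $D=(d_{ij})$ and of the degree-two unacceptable monomials, respectively.

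First I would verify $(\mathcal P_D+\mathcal U_D)\subset \mathrm{Ann}(N)$ by case analysis on each generator. Given a $2\times 2$ permanent $d_{ij}d_{kl}+d_{il}d_{kj}$ and an $m\times m$ minor $\det A_J$: if $\{j,l\}\subset J$, Lemma~\ref{lem:generic-1} applied to the submatrix $A_J$ yields annihilation, while otherwise one of the variables involves a column outside $J$ and so acts as zero on $\det A_J$. The same dichotomy handles the degree-two unacceptable monomials (squares, or two variables sharing a row or a column).

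Next I would establish equality in degree two by a dimension count. Using Remark~\ref{remark:introIK} together with the identity $S_{m-2}\circ N=M_2(A)$ --- valid because the hypothesis $n\ge m$ lets us extend any two columns of $A$ to an $m$-subset $J$, so that $\det A_J$ contracts to the given $2\times 2$ minor of $A$ --- we obtain $\mathrm{Ann}(N)_2=M_2(A)^\perp$, whose codimension in $S_2$ is $\dim M_2(A)=\binom{m}{2}\binom{n}{2}$. Direct enumeration gives $\dim\mathcal P_D=\binom{m}{2}\binom{n}{2}$ and $\dim\mathcal U_D=mn+m\binom{n}{2}+n\binom{m}{2}$, with $\mathcal P_D\cap\mathcal U_D=0$ because the two summands are supported on disjoint monomial types. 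A short arithmetic check then yields $\dim(\mathcal P_D+\mathcal U_D)=\binom{mn+1}{2}-\binom{m}{2}\binom{n}{2}$, matching $\dim\mathrm{Ann}(N)_2$ and forcing equality.

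Finally I would propagate the degree-two generation to all degrees. For $k>m$ every monomial in $S_k$ is unacceptable by the pigeonhole principle on the $m$ rows of $D$, so the claim is automatic. For $2<k\le m$ I would apply Remark~\ref{remark:introIK} together with the identity $S_{m-k}\circ N=M_k(A)$ to rewrite $\mathrm{Ann}(N)_k=\bigcap_{A_*}\mathrm{Ann}(\det A_*)_k$, where the intersection runs over the $k\times k$ submatrices $A_*$ of $A$, and invoke Corollary~\ref{cor:generic-main-k} in each sub-ring $S^*$ to describe each summand. The main obstacle is the combinatorial bookkeeping across submatrices, paralleling the argument in Corollary~\ref{cor:generic-main-k}: one must verify that an acceptable degree-$k$ monomial, which uses $k$ distinct rows and $k$ distinct columns and therefore lies in a unique sub-ring $S^*$, combines with its companions in $\mathrm{Ann}(N)_k$ to yield an element of $S_{k-2}\cdot\mathcal P_D$.
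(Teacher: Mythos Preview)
Your proposal is correct and follows essentially the same route as the paper: write $\mathrm{Ann}(N)$ as the intersection of the annihilators of the individual maximal minors and reduce each to the square case via Theorem~\ref{thm:main-generic-determinant} (and its degree-$k$ refinement Corollary~\ref{cor:generic-main-k}/Proposition~\ref{prop:generic-n}). Your explicit degree-two dimension count and the observation that an acceptable degree-$k$ monomial lies in a unique $k\times k$ sub-ring are exactly the details the paper's terse proof leaves implicit.
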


\begin{proof} 
Let $s={n\choose m}$, and $f_1,...,f_s$ denote the $m\times m$ minors of $A$. We have 

$$\mathrm{Ann}(N)=\mathrm{Ann}(<f_1,...,f_s>)=\bigcap_{i=1}^{i=s}(\mathrm{Ann}(f_i)).$$

Let $R^i$ denote the ring of variables of $f_i$. Hence by Theorem 2.12 we have $\mathrm{Ann}(f_i)\cap S^i $ is generated in degree 2. So we have $\mathrm{Ann}(N)$ is also generated in degree 2.

\end{proof}


\section{Application to the ranks of the determinant and the permanent}
\label{rank-generic}

\begin{Notation}

Let $F\in R=\sf k$$[a_{ij}]$ be a homogeneous form of degree $d$. A presentation 
\begin{equation}\label{eq:Waring decomposition}
F=l_1^d+...+l_s^d \text{ with }l_i\in R_1.
\end{equation}

is called a \emph{Waring decomposition} of length $s$ of the polynomial $F$. The minimal number $s$ that satisfies the Equation \ref{eq:Waring decomposition} is called the \emph{rank} of $F$.

The apolarity action of $S=\sf k$$[d_{ij}]$ on $R$, defines $S$ as a natural coordinate ring on the projective space $\textbf{P}(R_1)$ of 1-dimensional subspaces of $R_1$ and vice versa. A finite subscheme $\Gamma\subset \textbf{P}(R_1)$ is apolar to $F$ if the homogeneous ideal $I_\Gamma \subset S$ is contained in $\mathrm {Ann}(F)$ (\cite{IK},\cite{RS}).

\begin{remark}\label{rmrk:rank-definition-ranestad}((\cite{IK} Def. 5.66,\cite{RS})) Let $\Gamma=\{[l_1],...,[l_s]\}$ be a collection of $s$ points in $\textbf{P}(R_1)$. Then 
$$
F=c_1l_1^d+...+c_sl_s^d \text{ with }c_i\in\sf{k}.
$$
if and only if 
$$I_\Gamma \subset \mathrm {Ann}(F)\subset S$$

\end{remark}

\end{Notation}
\vspace{0.2in}
\begin{defi}\label{def:ranks} 

We have the following ranks (\cite{IK} Def. 5.66 , \cite{BR} and  \cite{RS}). Here $\Gamma$ is a punctual scheme (possibly not smooth), and the degree of $\Gamma$ is the number of points (counting multiplicities) in $\Gamma$.

a. the rank $r(F)$:
\begin{equation*}
r(F)=\min\{\deg \Gamma| \Gamma\subset \textbf{P}(R_1) \text{ smooth}, \dim \Gamma=0,I_\Gamma \subset \mathrm {Ann}(F)\}.
\end{equation*}

Note that when $\Gamma$ is smooth, it is the set of points in the Remark \ref{rmrk:rank-definition-ranestad} (\cite{IK}, page 135).

b. the smoothable rank  $sr(F)$:
\begin{equation*}
sr(F)=\min\{\deg \Gamma| \Gamma\subset \textbf{P}(R_1) \text{ smoothable}, \dim \Gamma=0,I_\Gamma \subset \mathrm {Ann}(F)\}.
\end{equation*}

Note that for the smoothable rank one considers the smoothable schemes, that are the schemes which are the limits of smooth schemes of $s$ simple points (\cite{IK}, Definition 5.66). 

c. the cactus rank (scheme length in \cite{IK}, Definition 5.1 page 135) $cr(F)$:
\begin{equation*}
cr(F)=\min\{\deg \Gamma| \Gamma\subset \textbf{P}(R_1), \dim \Gamma=0,I_\Gamma \subset \mathrm {Ann}(F)\}.
\end{equation*}


d. the differential rank (Sylvester's catalecticant or apolarity bound) is the maximal dimension of a homogeneous component of $S/\mathrm {Ann}(F)$:
\begin{equation*}
l_{diff}(F)= \max_{i\in \mathbb{N}_0}  \{ (H(S/\mathrm {Ann}(F)))_i\}.
\end{equation*}
\end{defi}

Note that we give a lower bound for the cactus rank of the determinant and permanent of the generic matrix. We do not have information on  the smoothable rank of the generic  determinant or permanent. It is still open to find a bound for the smoothable rank. The work of A. Bernardi and K. Ranestad \cite{BR} in the case of generic forms of a given degree and number of variables show that the cactus rank and smoothable rank can be very different.

\vspace{0.2in}
\begin{prop}\label{prop:ranks-inequality}(\cite{IK}, Proposition 6.7C)
The above ranks satisfy
\begin{equation*}
l_{diff}(F) \leq cr(F) \leq sr(F) \leq r(F).
\end{equation*}
\end{prop}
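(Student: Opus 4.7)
The plan is to establish the three inequalities separately. The outer two, $cr(F) \leq sr(F) \leq r(F)$, are purely set-theoretic. Each of the three ranks is the minimum of $\deg \Gamma$ taken over apolar $0$-dimensional subschemes $\Gamma \subset \textbf{P}(R_1)$ with $I_\Gamma \subset \mathrm{Ann}(F)$, restricted to successively larger classes of $\Gamma$ (smooth for $r$, smoothable for $sr$, arbitrary for $cr$). Since a smooth $0$-dimensional scheme is trivially the constant flat limit of itself and is therefore smoothable, and every smoothable $0$-dimensional scheme is in particular a $0$-dimensional scheme, the feasible sets form an increasing chain, and the minima decrease along the chain.

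For the remaining inequality $l_{diff}(F) \leq cr(F)$ I would fix any $0$-dimensional apolar subscheme $\Gamma \subset \textbf{P}(R_1)$ with saturated ideal $I_\Gamma \subset \mathrm{Ann}(F)$ realizing $\deg\Gamma = cr(F)$. The ideal containment yields a graded surjection $S/I_\Gamma \twoheadrightarrow S/\mathrm{Ann}(F)$, so
\begin{equation*}
\dim_{\sf k}(S/\mathrm{Ann}(F))_i \leq \dim_{\sf k}(S/I_\Gamma)_i \quad \text{for all } i \geq 0.
\end{equation*}
I would then invoke the classical Hilbert-function bound for $0$-dimensional projective subschemes: $\dim_{\sf k}(S/I_\Gamma)_i \leq \deg\Gamma$ for every $i$, with equality for $i \gg 0$. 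One quick justification is that a generic linear form $\ell \in S_1$ avoids the finitely many points of $\Gamma$ and is therefore a non-zero-divisor on the saturated coordinate ring $S/I_\Gamma$; multiplication by $\ell$ then injects each graded piece into the next, so the Hilbert function is monotone non-decreasing and bounded above by its eventual value $\deg\Gamma$. Taking the maximum over $i$ in the display above yields $l_{diff}(F) \leq \deg\Gamma = cr(F)$.

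There is no serious obstacle in this argument: aside from the Hilbert-function bound on $0$-dimensional schemes, every step is a direct manipulation of the definitions of Section \ref{rank-generic}. I would therefore expect the proof to be essentially the short one in the cited \cite{IK}, Proposition 6.7C, with the only technical choice being how explicitly to justify the classical Hilbert-function inequality.
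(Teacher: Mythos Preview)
Your argument is correct and is the standard one. Note, however, that the paper does not actually give a proof of this proposition: it is simply quoted from \cite{IK}, Proposition 6.7C, with no proof environment following the statement. So there is nothing in the paper to compare your proof against beyond the citation itself; what you have written is essentially the argument one finds in the cited reference.

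One small technical remark: your justification of the Hilbert-function bound via a generic linear non-zero-divisor tacitly assumes the base field is infinite. Since Hilbert functions are unchanged under field extension this is harmless, but if you want to be fully self-contained over an arbitrary field of the type the paper allows, you could either note this base-change step explicitly or appeal directly to the fact that for a saturated ideal of a $0$-dimensional scheme the Hilbert function is non-decreasing and stabilizes at $\deg\Gamma$.
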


\begin{prop}\label{prop-mainRS} \textbf{(Ranestad-Schreyer)} If the ideal of $\mathrm {Ann}(F)$ is generated in degree d and $\Gamma\subset \textbf{P}(T_1)$ is a finite (punctual) apolar subscheme to $F$, then
\begin{equation*}
\deg \Gamma \geq \frac{1}{d} \deg (\mathrm {Ann}(F)),
\end{equation*}
where $\deg (\mathrm {Ann}(F)) =\dim (S/\mathrm {Ann} (F))$ is the length of the 0-dimensional scheme defined by $\mathrm {Ann}(F)$. 
\end{prop}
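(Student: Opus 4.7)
Set $A:=S/\mathrm{Ann}(F)$, the Artinian Gorenstein apolar algebra whose $\sf k$-dimension is $\deg(\mathrm{Ann}(F))$, and set $B:=S/I_\Gamma$, the homogeneous coordinate ring of the finite scheme $\Gamma\subset\mathbf{P}(T_1)$. Then $B$ is one-dimensional Cohen--Macaulay, and its Hilbert function satisfies $\dim_{\sf k}B_i\leq \deg\Gamma$ for all $i\geq 0$, with equality for $i\gg 0$. The apolarity hypothesis $I_\Gamma\subset \mathrm{Ann}(F)$ induces a graded surjection $B\twoheadrightarrow A$, hence $\dim_{\sf k}A_i\leq \dim_{\sf k}B_i\leq \deg\Gamma$ in every degree.

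The first reduction is to cut with a generic linear form $\ell\in S_1$. Because $B$ is one-dimensional Cohen--Macaulay, such an $\ell$ is a non-zerodivisor on $B$, so the Artinian quotient $B/\ell B$ has length exactly $\deg\Gamma$. The surjection $B\twoheadrightarrow A$ descends to a surjection $B/\ell B\twoheadrightarrow A/\ell A$, whence $\dim_{\sf k}(A/\ell A)\leq \deg\Gamma$. It therefore suffices to prove the Artinian comparison
\begin{equation*}
\dim_{\sf k} A \;\leq\; d\cdot \dim_{\sf k}(A/\ell A),
\end{equation*}
which isolates the point at which the degree-$d$ generation hypothesis should enter.

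The natural first attempt at this comparison is the filtration $A\supset \ell A\supset \ell^2 A\supset \cdots \supset 0$: each successive quotient $\ell^i A/\ell^{i+1} A$ is a cyclic $(A/\ell A)$-module via $\bar a\mapsto \overline{\ell^i a}$, so has dimension at most $\dim_{\sf k}(A/\ell A)$. Summing yields $\dim_{\sf k} A\leq N\cdot \dim_{\sf k}(A/\ell A)$ where $N$ is the nilpotency index of $\ell$ on $A$. Unfortunately, small examples (for instance $F=x^2y$, with $\mathrm{Ann}(F)=(\partial_x^3,\partial_y^2)$ generated in degree $d=3$ but a generic $\ell$ having nilpotency index $N=4$) show that $N$ may exceed $d$, so this naive bound is insufficient.

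The main obstacle is therefore to refine the argument so that $N$ is replaced by $d$. The promising route is to filter $A$ not by powers of $\ell$ but by submodules built from a minimal generating set of $\mathrm{Ann}(F)$, all of whose generators have degree $\leq d$, and to exploit Gorenstein symmetry $h_A(i)=h_A(j-i)$ (with $j=\deg F$) to run the filtration symmetrically from both ends, absorbing the excess nilpotency through the Gorenstein pairing. Once the Artinian comparison above is secured, the chain $\deg(\mathrm{Ann}(F))=\dim_{\sf k} A\leq d\cdot\dim_{\sf k}(A/\ell A)\leq d\cdot \deg\Gamma$ delivers $\deg\Gamma\geq \tfrac{1}{d}\deg(\mathrm{Ann}(F))$.
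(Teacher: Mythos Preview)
The paper does not give its own proof of this proposition; it is quoted from Ranestad--Schreyer \cite{RS} and used as a black box. So there is no in-paper argument to compare against, only the original \cite{RS} proof.

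Your setup is correct and matches the opening of the Ranestad--Schreyer argument: with $A=S/\mathrm{Ann}(F)$, $B=S/I_\Gamma$, and a general linear form $\ell$ that is a nonzerodivisor on $B$, one has $\deg\Gamma=\dim_{\sf k}(B/\ell B)\geq \dim_{\sf k}(A/\ell A)$. The trouble is the next step. You reduce the proposition to the ``Artinian comparison''
\[
\dim_{\sf k}A\ \leq\ d\cdot \dim_{\sf k}(A/\ell A),
\]
and then try to prove this inequality by refining the $\ell$-adic filtration. But this inequality is \emph{false} in general, so no refinement can rescue it. Take $F=xyz$ in three variables. Then $\mathrm{Ann}(F)=(X^2,Y^2,Z^2)$ is generated in degree $d=2$, $A$ has Hilbert function $(1,3,3,1)$ and $\dim_{\sf k}A=8$. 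For a general $\ell=aX+bY+cZ$ one eliminates $Z$ and finds
\[
A/\ell A\ \cong\ k[X,Y]/(X^2,Y^2,(aX+bY)^2)\ =\ k[X,Y]/(X^2,Y^2,XY)\ =\ k[X,Y]/\mathfrak m^2,
\]
so $\dim_{\sf k}(A/\ell A)=3$ and $d\cdot\dim_{\sf k}(A/\ell A)=6<8=\dim_{\sf k}A$. (Note that the proposition itself is fine here: $cr(xyz)=4\geq \tfrac12\cdot 8$.) Your own example $F=x^2y$ happened to satisfy the comparison with equality, which may have suggested it always holds; it does not.

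The consequence is that once you pass from $B$ to $A$ before cutting by $\ell$, you have discarded exactly the information needed to get the factor $d$. The Ranestad--Schreyer argument keeps $I_\Gamma$ in play: one works with the $B$-module $F^\perp/I_\Gamma$, which is generated in degrees $\leq d$ over the one-dimensional Cohen--Macaulay ring $B$, and compares Hilbert series of $B$ and of this module after cutting by $\ell$. In the $xyz$ example with the four apolar points $[1:\pm1:\pm1]$ one has $I_\Gamma=(X^2-Y^2,X^2-Z^2)$ and $F^\perp/I_\Gamma\cong B(-2)$, so $h_A(i)=h_B(i)-h_B(i-2)$ and summing gives $\dim_{\sf k}A=2\cdot\deg\Gamma$ on the nose. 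The moral is that the bound comes from the degree shift of $F^\perp/I_\Gamma$ inside $B$, not from any intrinsic inequality between $\dim_{\sf k}A$ and $\dim_{\sf k}(A/\ell A)$.
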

\vspace{.2in}

If in Proposition \ref{prop-mainRS} we take $F= \det (A)$ or $F=\mathrm{Per}(A)$, since we have found that for the determinant and the permanent of a matrix we have $d=2$; we can use the above proposition to find a lower bound for the above ranks of $F$.
\vspace{.2in}

\begin{thm}\label{generic-det-perm-RS-rank}
Let $F$ be the determinant or permanent of a generic $n\times n$ matrix $A$. We have

\begin{equation*}
{1\over{2}} {2n\choose n} \leq cr(F) \leq sr(F) \leq r(F).
\end{equation*}
\end{thm}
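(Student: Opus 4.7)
The plan is to combine the two ingredients already established: the degree-two generation of the apolar ideal and the exact length of the apolar algebra, and then plug directly into the Ranestad–Schreyer inequality.

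First I would observe that Theorems \ref{thm:main-generic-determinant} and \ref{thm:main-generic-permanent} give that the apolar ideal $\mathrm{Ann}(F)\subset S$ is generated in degree $d=2$ for $F=\det(A)$ and for $F=\mathrm{Per}(A)$. Next I would recall from Equation \ref{eq:sum-Hilbert-generic} that the length of the apolar algebra is
\begin{equation*}
\deg(\mathrm{Ann}(F)) \;=\; \dim_{\sf k}(S/\mathrm{Ann}(F)) \;=\; \sum_{k=0}^{n}\binom{n}{k}^{2} \;=\; \binom{2n}{n},
\end{equation*}
which holds identically for the determinant and the permanent since the $k\times k$ minors and the $k\times k$ permanents both span spaces of dimension $\binom{n}{k}^{2}$ (as used in Section \ref{sec:Hilbert-generic}).

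Then I would apply Proposition \ref{prop-mainRS} of Ranestad–Schreyer with $d=2$: for any finite apolar subscheme $\Gamma\subset\mathbf{P}(R_{1})$ to $F$ we have
\begin{equation*}
\deg\Gamma \;\geq\; \tfrac{1}{2}\deg(\mathrm{Ann}(F)) \;=\; \tfrac{1}{2}\binom{2n}{n}.
\end{equation*}
Taking the infimum over punctual apolar $\Gamma$ yields $cr(F)\geq \tfrac{1}{2}\binom{2n}{n}$. The remaining inequalities $cr(F)\leq sr(F)\leq r(F)$ are exactly Proposition \ref{prop:ranks-inequality}, so concatenating gives the statement.

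There is essentially no obstacle here: the proof is a one-line application of Proposition \ref{prop-mainRS}, once one substitutes the generating degree $d=2$ (from Theorems \ref{thm:main-generic-determinant} and \ref{thm:main-generic-permanent}) and the length $\binom{2n}{n}$ (from Equation \ref{eq:sum-Hilbert-generic}). The only subtlety worth flagging is that the Ranestad–Schreyer bound a priori controls only $cr(F)$, so the chain of ranks must be closed up using Proposition \ref{prop:ranks-inequality}; this is already isolated as a separate statement and requires no further argument.
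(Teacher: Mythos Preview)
Your proof is correct and follows exactly the same approach as the paper: both invoke Theorems \ref{thm:main-generic-determinant} and \ref{thm:main-generic-permanent} for the generating degree $d=2$, Equation \ref{eq:sum-Hilbert-generic} for the length $\binom{2n}{n}$, then apply Proposition \ref{prop-mainRS} (Ranestad--Schreyer) and Proposition \ref{prop:ranks-inequality} to chain the inequalities. There is no meaningful difference between your argument and the paper's.
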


\begin{proof}
By Theorems \ref{thm:main-generic-determinant} and \ref{thm:main-generic-permanent}, Propositions \ref{prop-mainRS} and \ref{prop:ranks-inequality},and Equations \ref{eq:Hilbert-generic} and \ref{eq:sum-Hilbert-generic} we have for an apolar punctual scheme $\Gamma$,
\begin{equation*}
\deg \Gamma \geq \frac{1}{d} \deg (\mathrm {Ann}(F)= \frac{1}{2}\sum_{k=0}^{k=n} {{n\choose k}}^2= \frac{1}{2}{2n \choose n}.
\end{equation*}
\end{proof}
\vspace{.2in}

\begin{Notation} \cite{LT}
Let $\Phi \in S^d\mathbb C^n$ be a polynomial, we can polarize $\Phi$ and consider it as a multilinear form $\tilde{\Phi}$ where $\Phi(x)=\tilde{\Phi}(x,...,x)$ and consider the linear map $\Phi_{s,d-s}:S^s\mathbb C^{n*}\rightarrow S^{d-s}\mathbb C^{n}$, where $\Phi_{s,d-s}(x_1,...,x_s)(y_1,...,y_{d-s})=\tilde{\Phi}(x_1,...,x_s,y_1,...y_{d-s})$. Define
\begin{equation*}
Zeros(\Phi)=\{[x]\in \mathbb P \mathbb C^{n*}| \Phi(x)=0\} \subset  \mathbb P \mathbb C^{n*}.
\end{equation*}
Let $x_1,...,x_n$ be linear coordinates on  $\mathbb C^{n*}$ and define

$$
\Sigma_s(\Phi):=\{[x] \in Zeros(\Phi)| \frac{\partial^I \Phi}{\partial x^I}(x)=0,\forall  I,\text{ such that } |I|\leq s\}.
$$

\end{Notation}
\vspace{.2in}

In this notation $\Phi_{s,d-s}$ is the map from $S_s\to R_{n-s}$ taking  $h$ to $h\circ \Phi$, hence its rank is  $H(\mathfrak A_A)_s$.

In the following theorem we use the convention that $\dim \emptyset=-1$.
\vspace{.2in}

\begin{thm} \label{thm:LT-rank}\textbf{(Landsberg-Teitler)}(\cite{LT}) 
Let $\Phi \in S^d\mathbb C^{n}$, Let $1\leq s \leq d$. Then 

\begin{equation*}
rank(\Phi)\geq rank \Phi_{s,d-s}+ \dim \Sigma_s(\Phi)+1.
\end{equation*}
\end{thm}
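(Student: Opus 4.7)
The plan is to bound $\mathrm{image}(\Phi_{s,d-s})$ from two sides: once by a minimal Waring decomposition of $\Phi$, and once by the vanishing conditions imposed by $\Sigma_s(\Phi)$.

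Fix a Waring decomposition $\Phi=\sum_{i=1}^{r}l_i^d$ of minimal length $r=\mathrm{rank}(\Phi)$, and reduce to the \emph{concise} case where $l_1,\ldots,l_r$ span $\mathbb{C}^n$. A direct contraction/polarization computation gives
\begin{equation*}
\Phi_{s,d-s}(h)=\frac{d!}{(d-s)!}\sum_{i=1}^{r}h(l_i)\,l_i^{d-s}\in S^{d-s}\mathbb{C}^n\qquad\text{for all }h\in S^s\mathbb{C}^{n*},
\end{equation*}
so the image of $\Phi_{s,d-s}$ lies in $L:=\langle l_1^{d-s},\ldots,l_r^{d-s}\rangle$ and $\dim L\le r$. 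This is the first constraint, and it already recovers the classical catalecticant bound $\mathrm{rank}(\Phi_{s,d-s})\le r$.

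For the second constraint, observe that $\mathrm{image}(\Phi_{s,d-s})$ is spanned by the $s$-th order partial derivatives of $\Phi$, and by the very definition of $\Sigma_s(\Phi)$ every such partial vanishes at every $[x]\in\Sigma_s(\Phi)$. Therefore every $P\in\mathrm{image}(\Phi_{s,d-s})$ vanishes on $\Sigma_s(\Phi)$, yielding the refined inclusion $\mathrm{image}(\Phi_{s,d-s})\subset L\cap\mathcal{I}(\Sigma_s)_{d-s}$. Equivalently, the restriction homomorphism $\rho\colon L\to\mathcal{O}(\Sigma_s)_{d-s}$ kills the image, so
\begin{equation*}
\mathrm{rank}(\Phi_{s,d-s})\le\dim L-\dim\rho(L)\le r-\dim\rho(L).
\end{equation*}

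The central step is to prove $\dim\rho(L)\ge\sigma+1$, where $\sigma:=\dim\Sigma_s(\Phi)$. Under concision the forms $l_1,\ldots,l_r$ span $\mathbb{C}^n$, so their restrictions to $\Sigma_s(\Phi)$ span $\mathcal{O}(\Sigma_s)_1$; and since $\Sigma_s(\Phi)$ is a $\sigma$-dimensional subvariety of $\mathbb{P}\mathbb{C}^{n*}$, its linear span has projective dimension at least $\sigma$, forcing $\dim\mathcal{O}(\Sigma_s)_1\ge\sigma+1$. Choose $\sigma+1$ of the $l_i$ whose restrictions are linearly independent in $\mathcal{O}(\Sigma_s)_1$; their $(d-s)$-th powers stay linearly independent in $\mathcal{O}(\Sigma_s)_{d-s}$ by the standard Veronese fact that pure powers of linearly independent linear forms remain independent. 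This produces $\sigma+1$ independent elements of $\rho(L)$, so $\mathrm{rank}(\Phi_{s,d-s})\le r-\sigma-1$, which rearranges to the desired inequality.

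The main obstacle is this last step. Since $\Sigma_s$ may be reducible or fail to be non-degenerate in its projective span, and positive-degree polynomials can vanish on a variety without vanishing on its linear span, one must argue the linear independence of the $(d-s)$-th powers carefully: restrict to the linear span of (an irreducible component of) $\Sigma_s$ of maximal dimension $\sigma$, and invoke the injectivity of the Veronese embedding on that linear span to conclude that linearly independent linear forms there have linearly independent pure powers. Equivalently, one can pick $\sigma+1$ linearly independent points $x_0,\ldots,x_\sigma\in\Sigma_s$ and show that the evaluation functionals $\mathrm{ev}_{x_j}|_L\in L^*$ are linearly independent, which reduces to showing that the columns $(l_i(x_j)^{d-s})_{i=1}^{r}$ for $j=0,\ldots,\sigma$ are linearly independent in $\mathbb{C}^r$ — a claim whose proof uses concision and the vanishing conditions $[x_j]\in\Sigma_s$.
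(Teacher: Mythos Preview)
The paper does not give its own proof of this theorem; it is quoted from \cite{LT}. Your outline does follow the Landsberg--Teitler strategy: the image of $\Phi_{s,d-s}$ sits inside $L=\langle l_1^{d-s},\ldots,l_r^{d-s}\rangle$ and inside the kernel of the restriction $\rho\colon L\to\mathcal{O}(\Sigma_s)_{d-s}$, so the inequality follows once $\dim\rho(L)\ge\sigma+1$.

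Your justification of this last inequality has a genuine gap. The assertion that $\sigma+1$ linear forms with independent restrictions to (an irreducible component of) $\Sigma_s$ must have independent $(d-s)$-th power restrictions is false in general: on the irreducible surface $Z=V(x^m+y^m+z^m)\subset\mathbb{P}^3$ (so $\sigma=2$) the forms $x,y,z$ restrict independently, yet $x^m+y^m+z^m=0$ on $Z$. The ``Veronese fact'' you invoke holds in a polynomial ring, not in the coordinate ring of an arbitrary variety, and passing to the linear span of $Z$ does not help since independence on the span does not descend to independence on $Z$ itself. Your evaluation-functional alternative is likewise only asserted, not proved. The correct argument uses all $r$ forms simultaneously: by concision the map $[x]\mapsto[l_1(x):\cdots:l_r(x)]$ embeds $\Sigma_s$ into $\mathbb{P}^{r-1}$, and the coordinate-wise power map $[t_i]\mapsto[t_i^{\,d-s}]$ on $\mathbb{P}^{r-1}$ is finite, so the image of the composite still has dimension $\sigma$ and hence projective linear span of dimension at least $\sigma$; unwinding, $\dim\rho(L)\ge\sigma+1$. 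Your approach with only $\sigma+1$ chosen forms fails precisely because the corresponding projection to $\mathbb{P}^\sigma$ need not be an embedding and can collapse dimension, as the cone example shows.
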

\vspace{.2in}
\begin{rmk} (\textbf{Z. Teitler})
If we define $\Sigma_s(\Phi)$ to be a subset of affine rather than projective space, then the above theorem  does not need $+1$ at the end, and does not need the statement that the dimension of the empty set is $-1$.

\end{rmk}
\vspace{.2in}
Applying this theorem for the determinant yields
\vspace{.2in}
\begin{cor} \label{cor:-detLT-rank}\textbf{(Landsberg-Teitler)}(\cite{LT}) 

\begin{equation*}
{r({\det}_{n})} \geq {n \choose {\lfloor{n/2}\rfloor}}^2+n^2-{(\lfloor{n/2}\rfloor+1)}^2.
\end{equation*}
\end{cor}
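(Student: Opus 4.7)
The plan is to apply Theorem \ref{thm:LT-rank} to $\Phi = \det_n$ with the optimal choice $s = \lfloor n/2 \rfloor$, and identify each of the three ingredients on the right-hand side.

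First, I would compute $\mathrm{rank}\, \Phi_{s,d-s}$. As the paper notes, the rank of the map $\Phi_{s,d-s}: S_s \to R_{n-s}$ sending $h \mapsto h \circ \det(A)$ is exactly $H(\mathfrak A_A)_s$. By Equation \ref{eq:Hilbert-generic} this equals $\binom{n}{s}^2$. The function $s \mapsto \binom{n}{s}^2$ is maximized at $s = \lfloor n/2 \rfloor$, which is the reason for that choice of $s$, and it yields the term $\binom{n}{\lfloor n/2\rfloor}^2$.

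Second, I would identify the locus $\Sigma_s(\det_n) \subset \mathbb{P}\mathbb{C}^{n^2 *}$. A partial derivative of $\det(A)$ of order $k$ is, up to sign, an $(n-k) \times (n-k)$ minor of $A$, and by Lemma \ref{lem:intro-gen} the space of order-$k$ partials is exactly $M_{n-k}(A)$. Hence $[X] \in \Sigma_s(\det_n)$ iff every minor of $X$ of size $\geq n-s$ vanishes at $X$, i.e.\ iff $\mathrm{rank}(X) \leq n - s - 1$. Thus $\Sigma_s(\det_n)$ is the projectivization of the classical determinantal variety of $n\times n$ matrices of rank $\leq n-s-1$, whose affine dimension is the well-known value $n^2 - (s+1)^2$. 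Therefore
\begin{equation*}
\dim \Sigma_s(\det_n) = n^2 - (s+1)^2 - 1.
\end{equation*}

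Third, I would plug these into Theorem \ref{thm:LT-rank} with $s = \lfloor n/2 \rfloor$:
\begin{equation*}
r(\det_n) \;\geq\; \binom{n}{\lfloor n/2 \rfloor}^2 + \bigl(n^2 - (\lfloor n/2\rfloor+1)^2 - 1\bigr) + 1,
\end{equation*}
which simplifies to the stated bound. The $-1$ from the projective dimension and the $+1$ in the Landsberg--Teitler statement cancel exactly (matching the Teitler remark about using affine dimension).

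I do not expect serious obstacles here: the Hilbert function computation is already in hand from Section \ref{sec:Hilbert-generic}, and the identification of $\Sigma_s(\det_n)$ with a determinantal variety plus its dimension formula are standard. The only thing that requires care is the indexing — making sure that ``partials of order $\leq s$'' translates to ``minors of size $\geq n-s$'', and hence to rank $\leq n-s-1$ rather than $\leq n-s$ — and checking that $s = \lfloor n/2\rfloor$ is indeed the optimal choice (for small $s$ the catalecticant term is too small; for $s$ close to $n$ the $\Sigma_s$ term collapses).
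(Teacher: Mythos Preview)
Your proposal is correct and follows exactly the route the paper indicates: the paper states Corollary~\ref{cor:-detLT-rank} simply as ``Applying this theorem for the determinant yields'' (citing \cite{LT}) without spelling out the details, and your computation of $\mathrm{rank}\,\Phi_{s,d-s}=\binom{n}{s}^2$ via Equation~\ref{eq:Hilbert-generic} together with the identification of $\Sigma_s(\det_n)$ as the projectivized determinantal variety of rank $\le n-s-1$ (affine dimension $(n-s-1)(n+s+1)=n^2-(s+1)^2$) is precisely how one derives the stated bound from Theorem~\ref{thm:LT-rank}. One minor remark: verifying that $s=\lfloor n/2\rfloor$ is the optimal choice is not needed to establish the inequality as stated, since Theorem~\ref{thm:LT-rank} with any particular $s$ already gives a valid lower bound.
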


\vspace{.2in}

\begin{prop} \label{prop:-BR-rank}\textbf{(Bernardi-Ranestad)}(\cite{BR}, Theorem 1) 
Let $F\in R^s$ be a homogeneous form of degree $d$, and let $l$ be any linear form in $S^s_1$. Let $F_l$ be a dehomogenization of $F$ with respect to $l$. Denote by $ \mathrm{Diff}(F)$ the subspace of $S^s$ generated by the partials of $F$ of all orders. Then
\begin{equation*}
cr(F)\leq \dim_{\sf k}  \mathrm{Diff}(F_l)
\end{equation*}

\end{prop}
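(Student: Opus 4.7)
The plan is to construct an explicit zero-dimensional apolar subscheme $\Gamma\subset\textbf{P}(R_1)$ whose degree equals $\dim_{\sf k}\mathrm{Diff}(F_l)$, and then conclude via Definition \ref{def:ranks}(c) that $cr(F)\le\dim_{\sf k}\mathrm{Diff}(F_l)$. The natural choice for $\Gamma$ is the local Artinian Gorenstein scheme, supported at the single point $[l]\in\textbf{P}(R_1)$, arising from the Macaulay inverse system of the dehomogenization $F_l$.

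First I would pass to the affine picture: localize $S$ at the maximal ideal cutting out $[l]$, choose affine coordinates $y_1,\ldots,y_{n-1}$ (where $n=\dim_{\sf k} R_1$), and regard $F_l$ as an inhomogeneous polynomial of degree at most $d$ in these coordinates. By Macaulay's inverse system correspondence (see \cite{IK}, Chap.~2), the quotient of the local ring by the contraction annihilator of $F_l$ is an Artinian local Gorenstein algebra $A$ whose $\sf k$-length is exactly $\dim_{\sf k}\mathrm{Diff}(F_l)$, because contraction against $F_l$ identifies $A$ with the space of iterated partials of $F_l$. Let $\Gamma=\mathrm{Spec}(A)$, viewed as a zero-dimensional subscheme of $\textbf{P}(R_1)$ concentrated at $[l]$, so that $\deg\Gamma=\dim_{\sf k}\mathrm{Diff}(F_l)$.

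The heart of the argument is the verification that $I_\Gamma\subset\mathrm{Ann}(F)$ as a homogeneous ideal in $S$. The device is to homogenize each local operator $\phi$ annihilating $F_l$ by multiplying by the appropriate power of the variable dual to $l$: a careful degree count should show that the homogenization $\tilde\phi$ satisfies $\tilde\phi\circ F=0$, because contraction is compatible with the projective-affine passage $F\leftrightarrow F_l$ provided degrees are tracked correctly. This translation between the affine annihilator of $F_l$ and the homogeneous annihilator of $F$ is the main technical obstacle; it relies crucially on $F$ being homogeneous of a single degree $d$, and is essentially an instance of the standard projective-affine dictionary for Macaulay's inverse system. Once $I_\Gamma\subset\mathrm{Ann}(F)$ is in hand, $\Gamma$ is a punctual apolar subscheme of degree $\dim_{\sf k}\mathrm{Diff}(F_l)$, and the cactus rank bound follows directly from the definition.
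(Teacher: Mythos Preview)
The paper does not give its own proof of this proposition: it is quoted as Theorem~1 of \cite{BR} and used as a black box, so there is nothing in the paper to compare your argument against. Your sketch is essentially the Bernardi--Ranestad argument itself: construct the local Artinian Gorenstein scheme $\Gamma$ supported at $[l]$ with defining ideal the annihilator of $F_l$, observe that its length is $\dim_{\sf k}\mathrm{Diff}(F_l)$, and check that $I_\Gamma\subset\mathrm{Ann}(F)$ via the affine--projective dictionary for inverse systems. The outline is correct and the only real work, as you rightly flag, is the homogenization step showing that annihilators of $F_l$ lift to annihilators of $F$.
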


We thank Pedro Marques for pointing out that it is easy to show that  the length of a polynomial is an upper bound for the length of any dehomogenization of that polynomial. So we have
\begin{equation}\label{eq:Pedro}
cr(F)\leq \dim_{\sf k} \mathrm{Diff}(F)= \deg (\mathrm {Ann}(F))
\end{equation}
\vspace{.2in}

\begin{prop}\label{prop:CCG-rank} 
 For the monomial $m=x_1^{b_1}...x_n^{b_n}$, where $1\leq b_1\leq...\leq b_n$ we have

(a) (\cite{CCG}) 
\begin{equation*}
r(x_1^{b_1}...x_n^{b_n})=\Pi_{i=2}^{i=n}(b_i+1)
\end{equation*}
(b) (\cite{RS}) 
\begin{equation*}
sr(x_1^{b_1}...x_n^{b_n})=cr(x_1^{b_1}...x_n^{b_n})=\Pi_{i=1}^{i=n-1}(b_i+1)
\end{equation*}
(c)(\cite{BBT2}) Let $d=b_1+\ldots+b_n$, and $m=l_1^d+\ldots+l_s^d$ with $r(m)=s$. Let $I\subset S$ be the homogeneous ideal of functions vanishing on $Q=\{[l_1],\ldots,[l_s]\}\subset \textbf{P}^{ n-1}$. Then $I$ is a complete intersection of degrees $b_2+1,\ldots,b_n+1$ generated by
$$y_2^{b_2+1}-\Phi_1y_1^{b_1+1},\ldots,y_n^{b_n+1}-\Phi_ny_1^{b_1+1},$$
for some homogeneous polynomials $\Phi_i\in S$ of degree $b_i-b_1$.

\end{prop}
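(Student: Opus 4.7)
My plan is to treat the three statements (a), (b), (c) separately, since they come from three different papers and use quite different techniques, all organized around the apolar ideal of $m=x_1^{b_1}\cdots x_n^{b_n}$, which by a direct contraction computation equals
\begin{equation*}
\mathrm{Ann}(m)=(y_1^{b_1+1},y_2^{b_2+1},\ldots,y_n^{b_n+1})\subset S.
\end{equation*}
Thus $S/\mathrm{Ann}(m)$ is a complete intersection of dimension $\prod_{i=1}^n(b_i+1)=\deg(\mathrm{Ann}(m))$, and all three parts reduce to exhibiting or classifying zero-dimensional subschemes of $\mathbf P(R_1)$ whose homogeneous ideal sits inside $\mathrm{Ann}(m)$.

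For part (a), the upper bound $r(m)\le\prod_{i=2}^n(b_i+1)$ is obtained by the CCG construction of an explicit reduced apolar set: one picks primitive $(b_i+1)$-th roots of unity $\zeta_i$ for $i\ge 2$ and takes the $\prod_{i=2}^n(b_i+1)$ points $[1:\zeta_2^{j_2}:\cdots:\zeta_n^{j_n}]$; their defining ideal, generated by $y_i^{b_i+1}-y_1^{b_i-b_1}\cdot y_1^{b_1+1}$ for $i\ge 2$, is contained in $\mathrm{Ann}(m)$ and has the right length. The matching lower bound is the delicate part: following CCG, one argues that any reduced apolar set must impose independent conditions on forms of degree $b_n$, and a catalecticant / syzygetic argument forces the cardinality of the set to be at least $\prod_{i=2}^n(b_i+1)$. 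This lower bound is what I expect to be the main obstacle, since it is essentially the content of the CCG theorem.

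For part (b), one uses the extra freedom of non-reduced schemes. The subideal $J=(y_1^{b_1+1},\ldots,y_{n-1}^{b_{n-1}+1})\subset\mathrm{Ann}(m)$ is itself a complete intersection of dimension zero in $\mathbf P(R_1)$ (after fixing a chart the scheme it defines lives in an affine space and has length $\prod_{i=1}^{n-1}(b_i+1)$), and Artinian complete intersections are smoothable, so by Definition \ref{def:ranks} this produces both a smoothable and a cactus-apolar scheme of that degree, giving the upper bound. The matching lower bound comes from the Ranestad–Schreyer analysis: any strictly shorter $\Gamma$ would force the Hilbert function of $S/I_\Gamma$ to fail to dominate that of $S/\mathrm{Ann}(m)$ in some degree, contradicting the containment $I_\Gamma\subset\mathrm{Ann}(m)$. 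Concretely one reads off the conflict from the largest nonzero component of the catalecticant-type Hilbert function of $m$.

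For part (c), given a minimal reduced Waring decomposition $m=l_1^d+\cdots+l_s^d$ with $s=r(m)$ from (a), the ideal $I$ of $Q=\{[l_1],\ldots,[l_s]\}$ is saturated, one-dimensional as a subscheme of $\mathbf P^{n-1}$ of length $s=\prod_{i=2}^n(b_i+1)$, and contained in $\mathrm{Ann}(m)$. The BBT2 approach is to show that after a suitable choice of coordinates adapted to the Waring decomposition, $I$ contains $n-1$ forms $y_i^{b_i+1}-\Phi_i y_1^{b_1+1}$ with $\Phi_i\in S_{b_i-b_1}$; a dimension count (these forms generate a complete intersection of degree $\prod_{i=2}^n(b_i+1)=s$) forces equality $I=(y_2^{b_2+1}-\Phi_2 y_1^{b_1+1},\ldots,y_n^{b_n+1}-\Phi_n y_1^{b_1+1})$. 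The only real work is the existence of such generators, which I would get by intersecting $\mathrm{Ann}(m)$ with the homogeneous ideal vanishing on the specific apolar set coming from (a) and showing that each $y_i^{b_i+1}$ can be corrected modulo $I$ by a multiple of $y_1^{b_1+1}$; this is where one imports the actual BBT2 argument.
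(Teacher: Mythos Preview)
The paper does not prove this proposition at all: it is stated as a collection of results quoted from the literature, with parts (a), (b), (c) attributed respectively to \cite{CCG}, \cite{RS}, and \cite{BBT2}, and no proof or sketch is given. So there is nothing in the paper to compare your argument against.

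Your outline is a reasonable roadmap to the cited results, but as you yourself flag, the real content lies in the lower bounds, and those sketches are too thin to count as proofs. In (a) the phrase ``a catalecticant / syzygetic argument forces the cardinality to be at least $\prod_{i=2}^n(b_i+1)$'' is a placeholder for the entire CCG argument; the naive catalecticant bound gives only $l_{\mathrm{diff}}(m)$, which is smaller, so something genuinely new is needed and you have not supplied it. In (b) your lower-bound reasoning is not correct as stated: the containment $I_\Gamma\subset\mathrm{Ann}(m)$ gives $H(S/I_\Gamma)_i\ge H(S/\mathrm{Ann}(m))_i$ for every $i$, which only recovers the catalecticant bound $l_{\mathrm{diff}}(m)$, not $\prod_{i=1}^{n-1}(b_i+1)$; the RS argument for monomials is different and more delicate. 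In (c) you correctly isolate the key step (producing the $n-1$ corrected generators inside $I$) but then defer to ``the actual BBT2 argument'' rather than carrying it out. If your goal is simply to record the statement as the paper does, a citation suffices; if you want a self-contained proof, each of the three lower bounds needs substantially more than what you have written.
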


\vspace{.2in}

\begin{ex} Let $n=2$, and \begin{center}  $ A= \left(%
\begin{array}{cc}
 
  a & b \\
  c & d\\
  \end{array}%
\right)$,

\end{center}

$\det(A)=ad-bc=(a+d)^2-(a-d)^2+(b-c)^2-(b+c)^2$ so $r(\det(A))=4$.

The corresponding Hilbert sequence for $n=2$ is $(1,4,1)$. We have $l_{diff}(\det(A))=4$. Using Theorem \ref{generic-det-perm-RS-rank} we have:
\begin{equation*}
cr(\det(A)) \geq \frac{1}{d} \deg (\mathrm {Ann}(\det(A)))=\frac{1}{2} (6)=3.
\end{equation*}
So the lower bound we obtain using Theorem \ref{generic-det-perm-RS-rank} is 3.
\newline\newline Using Corollary \ref{cor:-detLT-rank} (Landsberg-Teitler) we obtain:
\begin{equation*}
{r({det}_{2})} \geq {2 \choose {\lfloor{2/2}\rfloor}}^2+2^2-{(\lfloor{2/2}\rfloor+1)}^2=4+4-4=4.
\end{equation*}
On the other hand we have
\begin{equation*}
\det(A)=ad-bc=1/4((a+d)^2-(a-d)^2)-1/4((b+c)^2-(b-c)^2).
\end{equation*}
Hence

\begin{equation*}
r(\det(A))=cr(\det (A))=sr(\det (A))=l_{diff}(\det(A))=4.
\end{equation*}

\end{ex}

\vspace{.2in}
\begin{ex} Let $n=3$, and \begin{center}  $ A= \left(%
\begin{array}{ccc}
 
  a & b & e \\
  c & d  & f  \\
  g & h & i \\
  \end{array}%
\right)$,

\end{center}

$\det(A)=g(bf-de)-h(af-ce)+i(ad-bc)$.

Using Macaulay2 for the calculations we obtain the Hilbert sequence $(1,9,9,1)$, and by Theorem \ref{generic-det-perm-RS-rank} we have:
\begin{equation*}
cr(\det(A)) \geq \frac{1}{d} \deg (\mathrm {Ann}(\det(A)))=\frac{1}{2} (20)=10.
\end{equation*}
So the lower bound we find using the Theorem \ref{generic-det-perm-RS-rank} is 10, which is greater than the $l_{diff}(\det(A))=9$, so it is a better lower bound for the cactus and smoothable ranks introduced above.
\newline\newline Using Corollary \ref{cor:-detLT-rank} we have:

\begin{equation*}
{r({det}_{3})} \geq {3 \choose {\lfloor{3/2}\rfloor}}^2+3^2-{(\lfloor{3/2}\rfloor+1)}^2=9+9-4=14.
\end{equation*}


On the other hand, for every $x$, $y$ and $z$, it is easy to see that $r(xyz) \leq4$:
\begin{equation*}
xyz=1/24({(x+y+z)}^3+{(x-y-z)}^3-{(x-y+z)}^3-(x+y-z)^3).
\end{equation*}
Hence $14 \leq r(\det(A))\leq 24$.

If $a=1$ in $\det(A)$, we have that the punctual scheme $\mathrm{Ann}(\det A_{a=1})$ of degree $18$ with Hilbert function $(1,8,8,1)$. So by Proposition  \ref{prop:-BR-rank} we have:

$$cr(\det(A))\leq 18.$$

\end{ex}
\vspace{.2in}

\begin{ex} Let $n=4$, and \begin{center}  $ A= \left(%
\begin{array}{cccc}
 
  a & b & e & j\\
  c & d  & f  & k\\
  g & h & i  & l \\
  m & n & o & p\\
  \end{array}%
\right)$,

\end{center}

 
Using Macaulay2 for the calculations we obtain the Hilbert sequence $(1,16,36,16,1)$. By Theorem \ref{generic-det-perm-RS-rank}, 
\begin{equation*}
cr(\det(A)) \geq \frac{1}{d} \deg (\mathrm {Ann}(\det(A)))=\frac{1}{2} (70)=35.
\end{equation*}
which is less than the $l_{diff}(\det(A))=36$. So in this case $l_{diff}$ is a better lower bound for the cactus rank.

Using Corollary \ref{cor:-detLT-rank} (Landsberg-Teitler) we have:

\begin{equation*}
{r({\det}_{4})} \geq {4 \choose {\lfloor{4/2}\rfloor}}^2+4^2-{(\lfloor{4/2}\rfloor+1)}^2=36+16-9=43.
\end{equation*}

So the lower bound found by Corollary \ref{cor:-detLT-rank} (Landsberg-Teitler) is a better lower bound for the rank in this case.

Now using Proposition \ref{prop:CCG-rank} we have 

\begin{equation*}
{r({\det}_{4})} \leq (4!) (2^3)=192
\end{equation*}

\end{ex}
\vspace{.2in}


\begin{ex} Let $n=5$, and \begin{center}  $ A= \left(%
\begin{array}{ccccc}
 
  a & b & e & j & q\\
  c & d  & f  & k & r\\
  g & h & i  & l  & s\\
  m & n & o & p &t\\
  u  & v & w & x & y\\
  \end{array}%
\right)$,

\end{center}

 
Using Macaulay2 for the calculations we obtain the Hilbert sequence $(1,25,100,100,25,1)$. By Theorem  \ref{generic-det-perm-RS-rank}
\begin{equation*}
cr(\det(A))  \geq \frac{1}{d} \deg (\mathrm {Ann}(\det(A)))=\frac{1}{2} (252)=126,
\end{equation*}
which is greater than the $l_{diff}(\det(A))=100$. So it is a better lower bound for cactus rank than $l_{diff}$.

Using Corollary \ref{cor:-detLT-rank} (Landsberg-Teitler) we have:

\begin{equation*}
{r({\det}_{5})} \geq {5 \choose {\lfloor{5/2}\rfloor}}^2+5^2-{(\lfloor{5/2}\rfloor+1)}^2=116.
\end{equation*}

So for the first time at $n=5$ Theorem \ref{generic-det-perm-RS-rank} gives us a better lower bound for the rank than Corollary \ref{cor:-detLT-rank} (Landberg-Teitler).

Now using Proposition \ref{prop:CCG-rank} we have 

\begin{equation*}
{r({\det}_{5})} \leq (5!) (2^4)=1920
\end{equation*}

\end{ex}
\vspace{.2in}

\begin{ex} Let $n=6$. Using Macaulay2 for the calculations we obtain the Hilbert sequence  $$H(S/\mathrm{Ann}(\det A))=(1,36,225,400,225,36,1).$$ Now using Theorem  \ref{generic-det-perm-RS-rank} we have:
\begin{equation*}
cr(\det(A)) \geq \frac{1}{d} \deg (\mathrm {Ann}(\det(A)))=\frac{1}{2} (924)=462.
\end{equation*}
So the lower bound we can find using Theorem  \ref{generic-det-perm-RS-rank} is 462, which is greater than the $l_{diff}(\det(A))=400$, and therefore is a better lower bound for cactus rank than $l_{diff}$.

Using Corollary \ref{cor:-detLT-rank} (Landsberg-Teitler) we have:

\begin{equation*}
{r({\det}_{6})} \geq {6 \choose {\lfloor{6/2}\rfloor}}^2+6^2-{(\lfloor{6/2}\rfloor+1)}^2=420.
\end{equation*}

So again at $n=6$ Theorem \ref{generic-det-perm-RS-rank} give us a better lower bound than Corollary \ref{cor:-detLT-rank} (Landberg-Teitler).

Now using Proposition \ref{prop:CCG-rank} we have 

\begin{equation*}
{r({\det}_{6})} \leq (6!) (2^5)=23040
\end{equation*}

\end{ex}
\vspace{.2in}

\begin{remark}\label{remark-stirling-generic}
(a)Using Stirling's formula, $n!\sim\sqrt{2\pi n}\left({\frac{n}{e}}\right)^n$, we can approximate $2n\choose n$ for large $n$ by $4^n/\sqrt{n\pi}$. Hence for large $n$ Theorem \ref{generic-det-perm-RS-rank} gives us a lower bound asymptotic to $4^n/2\sqrt{n\pi}\leq cr(\det(A))$, and the Landsberg-Teitler formula gives us the lower bound $2\cdot~4^n/(n\pi)\leq r(\det(A))$. The Landsberg-Teitler lower bound for $r(\det(A))$ is also asymptotic to $l_{diff}(\det (A))={n \choose {\lfloor{n/2}\rfloor}}^2$, which is a lower bound for $ cr(\det (A)).$ These are also lower bounds for the corresponding ranks of the permanent of a generic $n\times n$ matrix.\par

(b) Using Proposition \ref{prop:CCG-rank} the upper bound for the rank of the determinant and permanent of a generic $n\times n$ matrix is given by $(n!)2^{n-1}$. This can be approximated for large $n$, using Stirling's formula, by $\sqrt{2\pi n}{(\frac{n}{e})}^n(2^{n-1})$.

(c) By Equation \ref{eq:Pedro} an upper bound for the cactus rank of both the determinant and permanent of a generic $n\times n$ matrix is $2n\choose n$, which is asymptotic to $4^n/\sqrt {n \pi}$.

\end{remark}
In the following table we give lower bounds for the ranks of the determinant and permanent of an $n\times n$ generic matrix.
\begin{table}[h]
\begin{center}
\caption{The determinant of the generic matrix}\label{table:generic}
\begin{tabular}{l*{7}{c}r}
\hline
$n$              & 2 & 3 & 4 & 5 & 6  & $n\gg 0$\\
\hline
lower bound for $cr(\det(A))$ by Theorem \ref{generic-det-perm-RS-rank} & 3 & 10 & 35 & 126 & 462 & $4^n/2\sqrt{n\pi}$ \\
\hline
lower bound for $r(\det(A))$ by Corollary \ref{cor:-detLT-rank}          & 4 & 14 & 43 &  116 & 420 & $4^n/2n\pi$\\
\hline
$l_{diff}(\det(A))$& 4 & 9 & 36 & 100 & 400 & ${n \choose {\lfloor{n/2}\rfloor}}^2$\\
\hline

\end{tabular}
\end{center}
\end{table}


\section{Annihilator of the Pfaffian and Hafnian}
\label{Annihilator of the Pfaffian and Hafnian}

In this section we discuss the annihilator ideals of the Pfaffians and of the Hafnians. We show that the annihilator ideal of the Pfaffian of a generic skew symmetric $2n\times 2n$ matrix  and the annihilator ideal of the Hafnian of generic symmetric $2n\times 2n$ matrix are both generated in degree 2. 

In the following discussion we let $X_m^{sk}=(x_{ij})$ with $x_{ij}=-x_{ji}$ be an  $m\times m$ skew symmetric matrix of indeterminates in the polynomial ring $R^{sk}=\sf k$$[x_{ij}]$, Let $Y_m^{sk}=~(y_{ij})$ with $y_{ij}=-y_{ji}$ be an  $m\times m$ skew symmetric matrix of indeterminates in the ring of differential operators $S^{sk}=\sf k$$[y_{ij}]$. We denote the Pfaffian of the matrix  $X_m^{sk}$ by $Pf(X_m^{sk})$. It is well known that for any odd number $m$ we have $\det(X_m^{sk})=0$. It is also well known that the square of the Pfaffian is equal to the determinant of a skew symmetric matrix. So in the following we are going to consider the annihilator of the Pfaffian of generic $m\times m$ skew~symmetric matrices, where $m=2n$ is an even number. Recall that
\vspace{0.2in}
\begin{Notation}
Let $F_{2n}\subset S_{2n}$ be the set of all permutations $\sigma$ satisfying the following conditions:

(1) $\sigma(1)<\sigma(3)<...<\sigma(2n-1)$

(2) $\sigma(2i-1)<\sigma(2i)$ for all $1\leq i \leq n$

\begin{itemize}

\item For a $2n\times 2n$ generic skew symmetric matrix $X^{sk}$, we denote by $Pf(X^{sk})$ the Pfaffian of  $X^{sk}$ defined by

\begin{equation}\label{eq:definition of Pfaffian-4}
Pf(X^s)=\sum_{\sigma\in F_{2n}} sgn({\sigma}) x_{\sigma(1)\sigma(2)}x_{\sigma(3)\sigma(4)}...x_{\sigma(2n-1)\sigma(2n)}
\end{equation}

\item
(\cite{IKO})  We denote by $Hf(X^s)$ the Hafnian of a generic symmetric $2n\times 2n$ matrix $X^s$ defined by

\begin{equation}\label{eq:definition of Hafnian-4}
Hf(X^s)=\sum_{\sigma\in F_{2n}} x_{\sigma(1)\sigma(2)}x_{\sigma(3)\sigma(4)}...x_{\sigma(2n-1)\sigma(2n)}
\end{equation}

\end{itemize}

\end{Notation}


Let $J_{2n}=\mathrm{Ann}(Pf(X_{2n}^{sk}))$. We first give some examples and then some partial results concerning $\mathrm{Ann}(Pf(X_{2n}^{sk}))$. Using Macaulay2 for calculations  we have the following results:

(a) Let $X_2$ be a generic skew symmetric $2\times 2$ matrix, then we have $H(S^{sk}/J_2)=(1,1)$. And the maximum degree of the generators of the annihilator ideal $J_2$ is 2. So using the Ranestad-Schreyer Proposition we have: 
\begin{equation*}
cr(Pf(X_2^{sk})\ge \frac{1}{d}\deg(\mathrm{Ann}(Pf(X_2^{sk})))=\frac{1}{2}(2)=1,
\end{equation*}
which is the same as the  differential length in this case. Evidently, in this case $r(Pf(X_2^{sk})=~1$, so we have
\begin{equation*}
r(Pf(X_2^{sk})=cr(Pf(X_2^{sk})=sr(Pf(X_2^{sk})=l_{diff}(Pf(X_2^{sk})=1.
\end{equation*}


(b) Let $X_4$ be a generic skew symmetric $4\times 4$ matrix. Using Macaulay2 for calculations we have $H(S^{sk}/J_4)=(1,6,1)$, and the maximum degree of the generators of the annihilator ideal $J_4$ is 2. Using the Ranestad-Schreyer Proposition we have: 
\begin{equation*}
cr(Pf(X_4^{sk})\ge \frac{1}{d}\deg(\mathrm{Ann}(Pf(X_4^{sk})))=\frac{1}{2}(8)=4,
\end{equation*}

which is less than $l_{diff}=6$.


(c) Let $X_6$ be a generic skew symmetric $6\times 6$ matrix. Using Macaulay2 for calculations we have  $H(S^{sk}/J_6)=(1,15,15,1)$, and the maximum degree of the generators of the annihilator ideal $J_6$ is 2. Using the Ranestad-Schreyer Proposition we have: 
\begin{equation*}
cr(Pf(X_6^{sk})\ge \frac{1}{d}\deg(\mathrm{Ann}(Pf(X_6^{sk})))=\frac{1}{2}(32)=16,
\end{equation*}

which is larger than $l_{diff}=15$.


(d) Let $X_8$ be a generic skew symmetric $8\times 8$ matrix. Using Macaulay2 for calculations we have $H(S^{sk}/J_8)=(1,28,70,28,1)$, and the maximum degree of the generators of the annihilator ideal $J_8$ is 2. From the Ranestad-Schreyer Proposition we have: 
\begin{equation*}
cr(Pf(X_8^{sk})\ge \frac{1}{d}\deg(\mathrm{Ann}(Pf(X_8^{sk})))=\frac{1}{2}(128)=64,
\end{equation*}

which is less than $l_{diff}=70$.

(e) Let $X_{10}$ be a generic skew symmetric $10\times 10$ matrix. Using Macaulay2 for calculations we have  $$H(S^{sk}/J_{10})=(1,45,210,210,45,1).$$ The maximum degree of the generators of the annihilator ideal $J_{10}$ is 2. From the Ranestad-Schreyer Proposition we have: 

\begin{equation*}
cr(Pf(X_{10}^{sk})\ge \frac{1}{d}\deg(\mathrm{Ann}(Pf(X_{10}^{sk})))=\frac{1}{2}(512)=256,
\end{equation*}

which is larger than $l_{diff}=210$.







\vspace{.2in}
\begin{remark}\label{remark: Hilbert-Pfaffian}
The Hilbert sequence for the apolar algebra of the Pfaffian of a generic $2n\times 2n$ matrix is given by $2n\choose2t$, and we have $\sum_{t=0}^{t=n}{{2n}\choose {2t}}=2^{2n-1}$.
\end{remark}
\vspace{.2in}
\begin{defi}\label{def: 2t-Pfaffian}
A $2t$-Pfaffian minor of a skew symmetric matrix $X$ is a Pfaffian of a submatrix of $X$ consisting of rows and columns indexed by $i_1,i_2,...,i_{2t}$ for some $i_1<i_2<...<i_{2t}$.
\end{defi}

The number of $2t$-Pfaffian minors of a $2n\times 2n$ skew symmetric matrix is clearly $2n\choose 2t$. We denote by $\{P_{2t}(X^{sk})\}$ the set of the $2t$-Pfaffians of $X^{sk}$. Furthermore, we denote by $P_{2t}(X^{sk})$ the vector space generated by $\{P_{2t}(X^{sk})\}$ in $R^{sk}_t$ and we denote by $(P_{2t}(X^{sk}))$ the ideal generated by $\{P_{2t}(X^{sk})\}$ in $R^{sk}$. Let $\tau$ be the lexicographic term order on $R^{sk}=\sf k$$[x_{ij}]$ induced by the following order on the indeterminates:

\begin{equation*}
x_{1,2n}\ge x_{1,2n-1}\ge...\ge x_{1,2}\ge x_{2,2n}\ge x_{2,2n-1} \ge...\ge x_{2n-1,2n}.
\end{equation*}





\vspace{.2in}
\begin{thm}\label{thm:Herzog-Trung basis Pfaffian}(Herzog-Trung \cite{HT}, Theorem 4.1)
The set $\{P_{2t}(X)\}$ of the $2t$-Pfaffians of the matrix $X^{sk}$ is a Gr\"{o}bner basis of the ideal $(P_{2t}(X))$ with respect to $\tau$.

\end{thm}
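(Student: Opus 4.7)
The plan is to verify the Gr\"obner basis property by matching Hilbert series. By standard Gr\"obner theory, the set $\{P_{2t}(X^{sk})\}$ is a Gr\"obner basis of $(P_{2t}(X^{sk}))$ precisely when $\mathrm{in}_\tau(P_{2t}(X^{sk}))$ equals the monomial ideal generated by the individual leading terms $\mathrm{in}_\tau(P_S)$, as $S$ ranges over the $2t$-subsets of $\{1,\ldots,2n\}$. Since the containment $(\mathrm{in}_\tau(P_S) : S) \subseteq \mathrm{in}_\tau(P_{2t}(X^{sk}))$ is automatic, it suffices to show that the two quotients have the same Hilbert series.

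First I would pin down each leading term. For $S = \{i_1 < i_2 < \cdots < i_{2t}\}$ I claim
$$
\mathrm{in}_\tau(P_S) \;=\; x_{i_1\, i_{2t}}\, x_{i_2\, i_{2t-1}} \cdots x_{i_t\, i_{t+1}},
$$
the anti-diagonal pairing on $S$. This I would prove by induction on $t$ from the Pfaffian Laplace expansion
$$
P_S \;=\; \sum_{k=2}^{2t}\,(-1)^k\, x_{i_1\, i_k}\, P_{S\setminus\{i_1,i_k\}}.
$$
Because $\tau$ prefers small row index and, within a row, large column index, the $k=2t$ summand carries the $\tau$-largest factor $x_{i_1\, i_{2t}}$; combining with the inductive leading term of $P_{S\setminus\{i_1,i_{2t}\}}$ gives the claimed monomial, and a direct comparison shows no other summand can match it.

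Next I would describe the \emph{standard monomials} --- those divisible by no such anti-diagonal --- as a combinatorial family of pair-configurations, and count them degree by degree to produce a generating function $h_{\mathrm{std}}(z)$. To match $h_{\mathrm{std}}(z)$ with the Hilbert series of $R^{sk}/(P_{2t}(X^{sk}))$, I would invoke a \emph{Pfaffian straightening law}: the quadratic Pl\"ucker--Tanisaki identities among $2t$-Pfaffians, namely the signed relations of the form
$$
\sum_{k} (-1)^k\, P_{\alpha \cup \{\beta_k\}}\, P_{\beta \setminus \{\beta_k\}} \;=\; 0
$$
for a $(2t-1)$-tuple $\alpha$ and a $(2t+1)$-tuple $\beta$, together with a Pfaffian analogue of the Knuth--Robinson--Schensted correspondence (Burge-type). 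These two ingredients allow any monomial to be rewritten modulo $(P_{2t}(X^{sk}))$ as a $\sf k$-linear combination of standard monomials, and the bijection shows this combination is unique, so that $h_{\mathrm{std}}(z)$ is also the Hilbert series of $R^{sk}/(P_{2t}(X^{sk}))$.

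The main obstacle is the straightening step: verifying termination of the rewriting and linear independence of the standard monomials modulo $(P_{2t}(X^{sk}))$. The identity itself is a matter of sign-bookkeeping in the permutation expansion of a Pfaffian, but the bijective content --- the Pfaffian KRS --- is where the genuine combinatorics lies, since the skew-symmetry $x_{ij} = -x_{ji}$ must be preserved throughout and one must handle Pfaffians rather than minors. Once the bijection is in place, the agreement of Hilbert series forces $\mathrm{in}_\tau(P_{2t}(X^{sk})) = (\mathrm{in}_\tau(P_S) : S\text{ a }2t\text{-subset})$, and the Gr\"obner basis conclusion follows.
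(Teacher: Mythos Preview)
The paper does not give its own proof of this statement: Theorem~\ref{thm:Herzog-Trung basis Pfaffian} is quoted verbatim as a result of Herzog and Trung \cite{HT}, Theorem~4.1, and is used as a black box (it is only invoked to conclude that the $2t$-Pfaffians are linearly independent, yielding Corollary~\ref{cor:dimension-degree-Pfaffian-Hilbert}). So there is nothing in the paper to compare your proposal against.

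That said, your outline is in the spirit of the Herzog--Trung argument: identify the initial term of each $2t$-Pfaffian under the antidiagonal term order, and then prove equality of the initial ideal with the ideal of those lead monomials by matching Hilbert series via a Pfaffian straightening law / KRS-type bijection. Your computation of $\mathrm{in}_\tau(P_S)$ as the nested pairing $x_{i_1 i_{2t}} x_{i_2 i_{2t-1}} \cdots x_{i_t i_{t+1}}$ is correct for the order $\tau$ defined just above the theorem. The substantive work, as you acknowledge, is the Pfaffian KRS and the straightening; you have named the right ingredients but not carried them out, so what you have is a correct plan rather than a proof. For the purposes of this paper that is moot, since the result is simply imported from \cite{HT}.
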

\vspace{.2in}
\begin{cor}\label{cor:dimension-degree-Pfaffian-Hilbert}
The dimension of the space of $2t \times 2t$ Pfaffians of a $2n\times 2n$ generic skew symmetric matrix $X^{sk}$ is $2n\choose 2t$. So we have
\begin{equation*}
\dim(S^{sk}/\mathrm{Ann}(Pf(X^{sk})))=2^{2n-1}.
\end{equation*}
\end{cor}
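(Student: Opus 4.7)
The plan is to prove the two assertions in sequence. For the dimension of $P_{2t}(X^{sk})$, the number of $2t$-Pfaffian minors equals $\binom{2n}{2t}$, since each such minor is determined by a choice of $2t$ indices from $\{1,\ldots,2n\}$. By Theorem~\ref{thm:Herzog-Trung basis Pfaffian} (Herzog--Trung), the set $\{P_{2t}(X^{sk})\}$ forms a Gr\"obner basis with respect to $\tau$; the initial terms are the pairwise distinct ``diagonal'' monomials $x_{i_1 i_2} x_{i_3 i_4} \cdots x_{i_{2t-1} i_{2t}}$ indexed by $\{i_1<i_2<\cdots<i_{2t}\}$, so the $\binom{2n}{2t}$ Pfaffians are linearly independent in $R^{sk}_t$ and span a space of dimension $\binom{2n}{2t}$.

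For the second assertion, I would first establish the Pfaffian analog of Lemma~\ref{lem:intro-gen}:
\[
S^{sk}_k \circ Pf(X^{sk}_{2n}) = P_{2(n-k)}(X^{sk}_{2n}) \subset R^{sk}_{n-k}.
\]
The inclusion $\supseteq$ follows because for any choice of disjoint pairs $(i_1,j_1),\ldots,(i_k,j_k)$, the classical Laplace-type expansion of the Pfaffian along pairs of rows/columns yields
\[
\pm\,(y_{i_1 j_1}\, y_{i_2 j_2}\cdots y_{i_k j_k}) \circ Pf(X^{sk}_{2n}) = Pf(X^{sk}_{\widehat I}),
\]
where $\widehat I = \{1,\ldots,2n\}\setminus\{i_1,j_1,\ldots,i_k,j_k\}$. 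The reverse inclusion follows because the contraction of any degree-$k$ monomial in the $y_{ij}$'s against $Pf(X^{sk}_{2n})$ is either zero (if some row or column index repeats in the monomial, since no term of the Pfaffian contains two $x$-factors sharing an index) or $\pm$ a $2(n-k)$-Pfaffian.

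Combining this with the Pfaffian analog of Remark~\ref{generalnonsense-intro} and Equation~(\ref{eq:Intro-dim}) yields
\[
H(S^{sk}/J_{2n})_k = \dim\bigl(S^{sk}_k \circ Pf(X^{sk}_{2n})\bigr) = \binom{2n}{2(n-k)} = \binom{2n}{2k},
\]
in agreement with Remark~\ref{remark: Hilbert-Pfaffian}. Summing over $k=0,1,\ldots,n$ then gives $\dim(S^{sk}/J_{2n}) = \sum_{k=0}^{n}\binom{2n}{2k}=2^{2n-1}$, via the standard identity obtained by averaging $\sum_{j=0}^{2n}\binom{2n}{j}=2^{2n}$ with $\sum_{j=0}^{2n}(-1)^j\binom{2n}{j}=0$. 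The main technical step is the sign bookkeeping in the Pfaffian analog of Lemma~\ref{lem:intro-gen}, but since we only need an equality of subspaces (not an identity with a prescribed sign), this goes through cleanly using the row/column expansion formula and poses no essential obstacle.
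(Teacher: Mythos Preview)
Your proof is correct and follows essentially the same approach as the paper: linear independence of the $2t$-Pfaffians via the Herzog--Trung Gr\"obner basis result (distinct initial monomials), followed by the combinatorial identity $\sum_{t=0}^{n}\binom{2n}{2t}=2^{2n-1}$ obtained from the binomial expansion of $(1+x)^{2n}$ at $x=\pm 1$.

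The one difference worth noting is that you are more explicit about the bridge between ``$\dim P_{2t}(X^{sk})=\binom{2n}{2t}$'' and the Hilbert function of $S^{sk}/J_{2n}$. The paper's proof of this corollary is terse and tacitly relies on the identity $S^{sk}_k\circ Pf(X^{sk})=P_{2(n-k)}(X^{sk})$, which it states as Remark~\ref{remark: Hilbert-Pfaffian} and proves only later as Lemma~\ref{lem:SkcirclePfaff}. You instead supply this step directly (your ``Pfaffian analog of Lemma~\ref{lem:intro-gen}''), giving a self-contained argument at this point. Your sketch of both inclusions is fine; the paper's later Lemma~\ref{lem:SkcirclePfaff} does the same thing by induction on $k$ using the row-expansion formula~(\ref{eq:Pfaffian-lemproofSn-2-circle-eq2}).
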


\begin{proof}
The proof follows directly from the Theorem \ref{thm:Herzog-Trung basis Pfaffian} and the combinatorial identity:

$$\sum_{t=0}^{t=n}{{2n}\choose {2t}}=2^{2n-1}.$$

This identity is easy to show; e.g., it follows immediately by evaluating at $x=1$ and $x=-1$ the binomial expansion of $(x+1)^{2n}$.
\end{proof}

\vspace{.2in}


The examples strongly suggest that the apolar ideal of the Pfaffian is generated in degree~$2$. In the remaining part of this section we prove that this is always the case.

\vspace{.2in}
\begin{defi}\label{def:PfaffianW} Let $W$ be the vector subspace of $S^{sk}$ spanned by degree 2 elements of type (a), (b) and (c) defined as follows

(a) square of each element of $Y^{sk}$. The number of these monomials is $2n^2-n$.

(b) product of each element of $Y^{sk}$ with another element in the same row or column of the matrix $Y^{sk}$. The number these monomials is $(2n^2-n)(2n-2)$.

(c) Given any $4\times 4$ submatrix of $X^{sk}$ of the rows and columns $i_1,i_2,i_3$ and $i_4$,

\begin{center}  $ Q=\left(%
\begin{array}{cccc}
  0 & x_{i_1i_2} & x_{i_1i_3} & x_{i_1i_4}  \\
  -x_{i_1i_2} & 0  & x_{i_2i_3} & x_{i_2i_4}\\
  -x_{i_1i_3} & -x_{i_2i_3} & 0 & x_{i_3i_4}\\
  -x_{i_1i_4} & -x_{i_2i_4} & -x_{i_3i_4} & 0\\
\end{array}%
\right),$

\end{center}

we have $Pf(Q)=x_{i_1i_2}x_{i_3i_4}-x_{i_1i_3}x_{i_2i_4}+x_{i_1i_4}x_{i_2i_3}$. Corresponding to $Pf(Q)$ we have 3 binomials which annihilate $Pf(Q)$ hence annihilate $Pf(X^{sk})$. These binomials are $y_{i_1i_2}y_{i_3i_4}+y_{i_1i_3}y_{i_2i_4}$, $y_{i_1i_2}y_{i_3i_4}-y_{i_1i_4}y_{i_2i_3}$ and $y_{i_1i_3}y_{i_2i_4}+y_{i_1i_4}y_{i_2i_3}$. However these three binomials are not linearly independent, and we can write one of them as the sum of the other 2 binomials. So corresponding to each $4\times 4$ Pfaffian we have 2 linearly independent binomials in the annihilator ideal, and using Theorem \ref{thm:Herzog-Trung basis Pfaffian}, the number of these binomials is $2\cdot{{2n}\choose 4}$.
\end{defi}

\vspace{.2in}
\begin{rmk}For  a $2n\times 2n$ skew symmetric matrix $X^{sk}$, we have $W\subset \mathrm{Ann}(Pf(X^{sk}))$.

\end{rmk}
\vspace{.2in}
\begin{lem}\label{lem:Pfaffian-gen2W} For the generic skew symmetric $2n\times 2n$ matrix $X^{sk}$, we have

\begin{equation*}
W=\mathrm{Ann}(P_4(X^{sk}))\cap S^{sk}_2.
\end{equation*}

\end{lem}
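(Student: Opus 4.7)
The forward containment $W\subseteq \mathrm{Ann}(P_4(X^{sk}))\cap S^{sk}_2$ is essentially built into Definition 4.6: generators of type (a) and (b) annihilate every $4\times 4$ Pfaffian because any Pfaffian monomial $x_{ab}x_{cd}$ has $a,b,c,d$ pairwise distinct, so contracting by a square $y_{ij}^2$ or a product $y_{ij}y_{ik}$ yields zero; the binomials in (c) are constructed precisely to kill the single Pfaffian $Pf(Q)$ on their four rows/columns and annihilate every other $4\times 4$ Pfaffian for the same reason as (a) and (b). The substantive work is therefore the reverse inclusion, which I plan to prove by a dimension count via the apolarity pairing.

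The plan is to split $S^{sk}_2$ into three natural pieces according to the index pattern of the monomial $y_{ij}y_{kl}$:
\begin{itemize}
\item[(i)] \emph{squares} $y_{ij}^2$, of which there are $n(2n-1)$;
\item[(ii)] \emph{shared-index products} $y_{ij}y_{kl}$ with $|\{i,j\}\cap\{k,l\}|=1$, of which there are $n(2n-1)(2n-2)$;
\item[(iii)] \emph{four-distinct products} $y_{ij}y_{kl}$ with $|\{i,j\}\cap\{k,l\}|=0$, of which there are $3\binom{2n}{4}$ (three pairings of each unordered $4$-subset).
\end{itemize}
The space $P_4(X^{sk})$ lies entirely inside the span of the four-distinct monomials, since each $2\times 2$ Pfaffian $x_{i_1 i_2}x_{i_3 i_4}-x_{i_1 i_3}x_{i_2 i_4}+x_{i_1 i_4}x_{i_2 i_3}$ uses four distinct row indices. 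The generators of $W$ also respect this splitting: (a) spans (i), (b) spans (ii), and (c) lives in (iii), contributing two binomials per $4$-subset.

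Next I would assemble the count. By the apolarity pairing (the analogue of Equation \ref{eq:Intro-dim}) together with Corollary \ref{cor:dimension-degree-Pfaffian-Hilbert} giving $\dim P_4(X^{sk})=\binom{2n}{4}$, one has
\[
\dim_{\sf k}\bigl(\mathrm{Ann}(P_4(X^{sk}))\cap S^{sk}_2\bigr)\;=\;\dim_{\sf k} S^{sk}_2 - \binom{2n}{4}.
\]
Pieces (i) and (ii) automatically pair to zero with $P_4(X^{sk})$, contributing $n(2n-1)+n(2n-1)(2n-2)=n(2n-1)^2$ to $\dim W$. In piece (iii), for each $4$-subset $\{i_1,i_2,i_3,i_4\}$ the Pfaffian $Pf(Q)$ is a single linear form on the $3$-dimensional subspace spanned by the three pairings, and its annihilator in that subspace is exactly $2$-dimensional; since the three binomials coming from $Pf(Q)$ in (c) are constructed to annihilate $Pf(Q)$ and any two of them are linearly independent, they span this $2$-dimensional annihilator. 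Summing over $\binom{2n}{4}$ subsets yields a further $2\binom{2n}{4}$ dimensions. Thus $\dim W = n(2n-1)^2 + 2\binom{2n}{4} = \dim S^{sk}_2 - \binom{2n}{4}$, as required.

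The main technical checkpoint is the linear independence: the generators in (a) and (b) are monomials supported on pieces (i) and (ii), hence clearly independent of (c), which lives in (iii); and binomials in (c) from distinct $4$-subsets involve disjoint sets of monomials in piece (iii), so they cannot interfere with each other. The only subtle point is the in-subset calculation showing that the two designated binomials per Pfaffian span exactly the two-dimensional kernel of the evaluation pairing against $Pf(Q)$, which is an elementary linear algebra verification on a $3\times 1$ system. Once this is in place, the two inclusions and matching dimensions give the desired equality $W=\mathrm{Ann}(P_4(X^{sk}))\cap S^{sk}_2$.
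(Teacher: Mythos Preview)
Your proposal is correct and follows essentially the same approach as the paper: show $W\subseteq \mathrm{Ann}(P_4(X^{sk}))\cap S^{sk}_2$, compute $\dim W$ by summing the contributions of types (a), (b), (c), compute the annihilator dimension as $\dim S^{sk}_2-\binom{2n}{4}$ via the apolarity pairing and Corollary~\ref{cor:dimension-degree-Pfaffian-Hilbert}, and conclude by matching dimensions. The one organizational difference is that your explicit three-piece decomposition of $S^{sk}_2$ by index pattern makes the linear-independence step for the type-(c) binomials self-contained (distinct $4$-subsets yield disjoint monomial support, and within a subset two of the three binomials are visibly independent), whereas the paper handles that step by citing the Herzog--Trung Gr\"obner basis theorem; your argument is slightly more elementary but not a different route.
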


\begin{proof}
The monomials of type (a) and (b) correspond to unacceptable monomials discussed earlier and are linearly independent from any binomial in (c). The binomials in (c) are linearly independent by Theorem \ref{thm:Herzog-Trung basis Pfaffian}. Hence we have 

\begin{equation}\label{eq:Pf-W-proof-1}
\dim_{\sf k}(W)=2{{2n}\choose 4}+(2n^2-n)(2n-2)+2n^2-n={{2n^2-n+1}\choose 2}-{2n\choose4}.
\end{equation}

According to Remark \ref{remark:introIK} we have

\begin{equation*}
\dim_{\sf k}(\mathrm{Ann}(P_4(X^{sk})))\cap S^{sk}_2=\dim_{\sf k}S^{sk}_2-\dim_{\sf k}(P_4(X^{sk})).
\end{equation*}

So we have 

\begin{equation}\label{eq:Pf-W-proof-2}
\dim_{\sf k}(\mathrm{Ann}(P_4(X^{sk})))\cap S^{sk}_2={{2n^2-n+1}\choose 2}-{2n\choose4}.
\end{equation}

Using Equations \ref{eq:Pf-W-proof-1} and \ref{eq:Pf-W-proof-2} we obtain

\begin{equation}\label{eq:Pf-W-proof-3}
\dim_{\sf k}(W)=\dim_{\sf k}(\mathrm{Ann}(P_4(X^{sk})))\cap S^{sk}_2.
\end{equation}

On the other hand, evidently we have 

\begin{equation}\label{eq:Pf-W-proof-4}
W\subset \mathrm{Ann}(P_4(X^{sk})))\cap S^{sk}_2.
\end{equation}

Using Equations \ref{eq:Pf-W-proof-3} and \ref{eq:Pf-W-proof-4} we have 

\begin{equation*}
W=\mathrm{Ann}(P_4(X^{sk}))\cap S^{sk}_2.
\end{equation*}
\end{proof}
\begin{lem} \label{lem:Pfaffian-Sn-2circlePfaf}Let $X^{sk}$ be a $2n\times 2n$ skew symmetric matrix ($n\ge 2$). We have,
\begin{equation*}
S_{n-2}\circ Pf(X^{sk})=P_4(X^{sk})\subset R^{sk}_2.
\end{equation*}
\end{lem}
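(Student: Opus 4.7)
The plan is to establish both inclusions $S_{n-2}\circ Pf(X^{sk}) \subseteq P_4(X^{sk})$ and $P_4(X^{sk}) \subseteq S_{n-2}\circ Pf(X^{sk})$, by analyzing the contraction of monomials in $S^{sk}_{n-2}$ against the matching-sum definition \eqref{eq:definition of Pfaffian-4} of $Pf(X^{sk})$.

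For the inclusion $P_4(X^{sk}) \subseteq S_{n-2}\circ Pf(X^{sk})$, I would fix an arbitrary $4$-element subset $I=\{i_1<i_2<i_3<i_4\}$ of $\{1,\dots,2n\}$, choose any perfect matching $\mu=\{\{a_1,b_1\},\dots,\{a_{n-2},b_{n-2}\}\}$ of the complementary set $\{1,\dots,2n\}\setminus I$ (with $a_s<b_s$), and consider the degree-$(n-2)$ monomial $D_\mu=y_{a_1b_1}\cdots y_{a_{n-2}b_{n-2}}\in S^{sk}_{n-2}$. Each term of $Pf(X^{sk})$ is indexed by a perfect matching $\sigma\in F_{2n}$ and is squarefree in the variables $\{x_{ij}: i<j\}$, so $D_\mu$ annihilates every term of $Pf(X^{sk})$ whose matching does not contain all the pairs $\{a_s,b_s\}$. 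The matchings $\sigma\in F_{2n}$ that do contain $\mu$ are in bijection with the three perfect matchings of $I$; these three residual matchings are exactly the ones giving the monomials $x_{i_1i_2}x_{i_3i_4}$, $x_{i_1i_3}x_{i_2i_4}$, $x_{i_1i_4}x_{i_2i_3}$ that appear in $Pf(Q_I)$. A direct sign computation—cleanest via the row/column permutation sending $(i_1,i_2,i_3,i_4,a_1,b_1,\dots,a_{n-2},b_{n-2})$ to $(1,2,\dots,2n)$, which multiplies the Pfaffian by $\mathrm{sgn}$ of this permutation—shows that the three surviving signs match those of $Pf(Q_I)$ up to a common global sign $\varepsilon_\mu=\pm1$ depending only on $\mu$. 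Hence $D_\mu\circ Pf(X^{sk}) = \varepsilon_\mu Pf(Q_I) \in P_4(X^{sk})$, and in particular $Pf(Q_I)$ itself is in $S_{n-2}\circ Pf(X^{sk})$.

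For the reverse inclusion, note that $S^{sk}_{n-2}$ is spanned by squarefree monomials $y_{a_1b_1}\cdots y_{a_{n-2}b_{n-2}}$ with $a_s<b_s$. If the $2(n-2)$ indices appearing are not all distinct, then since every perfect matching of $\{1,\dots,2n\}$ uses each element once, the monomial cannot be contained in any matching, and the contraction with $Pf(X^{sk})$ is zero. Otherwise the indices form a partial matching $\mu$ with a 4-element complement $I$, and by the previous paragraph the contraction equals $\pm Pf(Q_I)\in P_4(X^{sk})$. Thus every element of $S_{n-2}\circ Pf(X^{sk})$ is a $\sf k$-linear combination of $4\times 4$ Pfaffian minors, giving the inclusion.

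The main technical obstacle is the sign-tracking in the first inclusion: verifying that the three matchings of $\{1,\dots,2n\}$ extending $\mu$ have signs that agree (up to a common factor) with the alternating signs in $Pf(Q_I)=x_{i_1i_2}x_{i_3i_4}-x_{i_1i_3}x_{i_2i_4}+x_{i_1i_4}x_{i_2i_3}$. The cleanest route is to pass to the reordered matrix obtained by the permutation described above; this reduces the claim to the trivial case $Pf(\mathrm{diag\text{-}pair}) = Pf(Q_{1234})$ on the first four indices, for which the identity is a direct check from \eqref{eq:definition of Pfaffian-4}.
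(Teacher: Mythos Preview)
Your argument is correct, but it takes a different route from the paper's. For the inclusion $P_4(X^{sk}) \subseteq S_{n-2}\circ Pf(X^{sk})$, the paper proceeds by induction on $n$ via the cofactor expansion \eqref{eq:Pfaffian-lemproofSn-2-circle-eq2} of the Pfaffian along the first row, rather than exhibiting an explicit matching monomial $D_\mu$ as you do. For the reverse inclusion the paper does not compute directly at all: it uses Lemma~\ref{lem:Pfaffian-gen2W} to identify $W=(\mathrm{Ann}(P_4(X^{sk})))_2$, combines this with $W\subset(\mathrm{Ann}(Pf(X^{sk})))_2$ and the already-established first inclusion to force equality of the degree-2 annihilators, and then deduces equality of the spaces themselves by the duality of Remark~\ref{remark:introIK}. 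Your approach is more elementary and self-contained---it needs neither induction nor Lemma~\ref{lem:Pfaffian-gen2W}---but in exchange you must track the Pfaffian signs explicitly, which the paper's duality argument sidesteps. One minor slip: $S^{sk}_{n-2}$ is not literally spanned by squarefree monomials, but since $Pf(X^{sk})$ is squarefree every non-squarefree monomial contracts it to zero, so restricting attention to squarefree monomials is harmless.
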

\begin{proof}

First we show 
\begin{equation}\label{eq:Pfaffian-lemproofSn-2-circle}
S_{n-2}\circ Pf(X^{sk})\supset P_4(X^{sk}).
\end{equation}

We use induction on the size of the matrix. 

The first step is $2n=6$. We denote by $f=[i_1,i_2,i_3,i_4]\in P_4(X^{sk})$ the Pfaffian of the sub matrix with the rows and columns $i_1,i_2,i_3$ and $i_4$. We have ${6\choose 4}=15$ choices for $f$. For any of these choices we get the Pfaffian of a $2\times 2$ sub matrix of the form 
\begin{center}  $ \left(%
\begin{array}{cc}
  0 & x \\
  -x & 0  \\
 \end{array}%
\right),$

\end{center}
as the coefficient of $f$ in the Pfaffian of the matrix $X^{sk}$. So if we differentiate the $6\times 6$ Pfaffian with respect to that variable $x$, we get the $4\times 4$ Pfaffian $f=[i_1,i_2,i_3,i_4]$.

Assume that Equation \ref{eq:Pfaffian-lemproofSn-2-circle} holds for the generic skew symmetric $(2n-2)\times(2n-2)$ matrix. We want to show it holds for the $2n\times 2n$ generic skew symmetric matrix. The Pfaffian of the skew symmetric $2n\times 2n$ matrix $X^{sk}$ can be computed recursively as 

\begin{equation}\label{eq:Pfaffian-lemproofSn-2-circle-eq2}
Pf(X^{sk})=\sum_{i=2}^{i=2n}(-1)^i x^{sk}_{1i} Pf(X^{sk}_{\hat{1}\hat{i}}),
\end{equation}

where $X^{sk}_{\hat{1}\hat{i}}$ denotes the matrix $X^{sk}$ with both the first and the $i$-th rows and columns removed. So $X^{sk}_{\hat{1}\hat{i}}$ is a $(2n-2)\times (2n-2)$ matrix and Equation \ref{eq:Pfaffian-lemproofSn-2-circle} holds for it. So for each choice of $[i_1,i_2,i_3,i_4]$ of the matrix $X^{sk}_{\hat{1}\hat{i}}$ we can find $n-3$ variables of  $X^{sk}_{\hat{1}\hat{i}}$ such that differentiating $Pf(X^{sk}_{\hat{1}\hat{i}})$ with respect to those variables gives us $[i_1,i_2,i_3,i_4]$. If we call those variable $a_1$,...,$a_{n-3}$, then using Equation \ref{eq:Pfaffian-lemproofSn-2-circle-eq2} if we add $x^{sk}_{1i}$ to our set of $n-3$ variables we will have a set of $n-2$ variables such that differentiating $Pf(X^{sk})$ with respect to those $n-2$ variables we will get $[i_1,i_2,i_3,i_4]$. Since we could write the recursive formula for the Pfaffian with respect to any other row or column, the result follows. 

For the opposite inclusion to Equation \ref{eq:Pfaffian-lemproofSn-2-circle} we have
\begin{equation*}
W\subset (\mathrm{Ann}(Pf(X^{sk})))_2\subset (\mathrm{Ann}(P_4(X^{sk})))_2.
\end{equation*}

But we have shown in Lemma \ref{lem:Pfaffian-gen2W} that
\begin{equation*}
W=(\mathrm{Ann}(P_4(X^{sk}))_2.
\end{equation*}

So we have 

\begin{equation*}
(\mathrm{Ann}(Pf(X^{sk})))_2=(\mathrm{Ann}(P_4(X^{sk})))_2.
\end{equation*}

By Remark \ref{remark:introIK} we have
\begin{equation*}
(\mathrm{Ann}(Pf(X^{sk})))_2=\mathrm{Ann}(S_{n-2}\circ (Pf(X^{sk}))).
\end{equation*}

Hence we have
\begin{equation*}
S_{n-2}\circ Pf(X^{sk})=P_4(X^{sk}).
\end{equation*}
\end{proof}

Recall that we denote by $P_{2k}(X^{sk})$ the vector subspace of $R^{sk}$ spanned by the $2k-$Pfaffian minors of $X^{sk}$ [Definition \ref{def: 2t-Pfaffian}]. 
\vspace{.2in}
\begin{lem}\label{lem:SkcirclePfaff} For $1\leq k\leq n-1$ we have
\begin{equation}\label{eq:SkcirclePfaff}
S_k\circ (Pf(X^{sk}))=P_{2n-2k}(X^{sk}).
\end{equation}
\end{lem}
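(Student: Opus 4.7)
The plan is to prove the two inclusions separately, using the standard Pfaffian differentiation identity $y_{ij}\circ Pf(X^{sk})=\pm Pf(X^{sk}_{\hat i\hat j})$, which is the apolar reformulation of the cofactor expansion recorded in Equation \ref{eq:Pfaffian-lemproofSn-2-circle-eq2}. In particular this identity already gives the base case $k=1$ of the lemma, since each $(2n-2)$-Pfaffian minor of $X^{sk}$ arises in this way for a unique pair $i<j$.

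For the forward inclusion $S_k\circ Pf(X^{sk})\subseteq P_{2n-2k}(X^{sk})$, I iterate the identity above: any monomial $y_{i_1j_1}\cdots y_{i_kj_k}\in S^{sk}_k$ acts on $Pf(X^{sk})$ to yield either $0$ (when two factors share a row or column index, so that the monomial lies in the subspace $W$ of Definition \ref{def:PfaffianW}, which by Lemma \ref{lem:Pfaffian-gen2W} is contained in $\mathrm{Ann}(Pf(X^{sk}))$) or, up to sign, the $(2n-2k)$-Pfaffian on the complementary indices $\{1,\ldots,2n\}\setminus\{i_1,j_1,\ldots,i_k,j_k\}$. Linearity then yields the desired containment.

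For the reverse inclusion I induct on $k$ with base case $k=1$ as above. For the inductive step, assume $S_{k-1}\circ Pf(X^{sk})=P_{2n-2k+2}(X^{sk})$. Associativity of the contraction action gives
\begin{equation*}
S_k\circ Pf(X^{sk})=S_1\circ\bigl(S_{k-1}\circ Pf(X^{sk})\bigr)=S_1\circ P_{2n-2k+2}(X^{sk}).
\end{equation*}
Given an arbitrary $(2n-2k)$-Pfaffian minor $P_f[i_1,\ldots,i_{2n-2k}]$, I select any two indices $a<b$ from the complementary set $\{1,\ldots,2n\}\setminus\{i_1,\ldots,i_{2n-2k}\}$ and form the enlarged $(2n-2k+2)$-Pfaffian minor on the indices $\{i_1,\ldots,i_{2n-2k},a,b\}$. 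Applying $y_{ab}\in S^{sk}_1$ to this enlarged minor recovers $\pm P_f[i_1,\ldots,i_{2n-2k}]$, proving that the desired minor lies in $S_1\circ P_{2n-2k+2}(X^{sk})$. The main technical issue is the sign bookkeeping in the iterated Pfaffian expansion, but since the argument only uses linear spans, these signs do not affect the conclusion, and the two inclusions combine to yield the stated equality.
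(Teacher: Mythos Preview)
Your proof is correct and follows essentially the same approach as the paper: both arguments reduce to the contraction identity $y_{ij}\circ Pf(X^{sk})=\pm Pf(X^{sk}_{\hat i\hat j})$ coming from the Pfaffian cofactor expansion, establish the base case $k=1$ from it, and then propagate both inclusions via $S_k\circ Pf=S_1\circ(S_{k-1}\circ Pf)$. The only cosmetic difference is that for the forward inclusion the paper writes out the induction explicitly, whereas you iterate the identity directly; your appeal to Lemma~\ref{lem:Pfaffian-gen2W} for the vanishing when indices collide is harmless but unnecessary, since the iterated identity already forces zero in that case.
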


\begin{proof}
First we want to show
\begin{equation*}
S_k\circ (Pf(X^{sk}))\subset P_{2n-2k}(X^{sk}).
\end{equation*}

We use induction on $k$. For $k=1$, we need to prove
\begin{equation*}
S_1\circ(Pf(X^{sk}))\subset P_{2n-2}(X^{sk}).
\end{equation*}
so we need to show for any monomial $y_{ij}\in S_1$ we have 
\begin{equation*}
y_{ij}\circ(Pf(X^{sk}))\subset P_{2n-2}(X^{sk}).
\end{equation*}

It is enough to show the above inclusion holds for $y_{12}$. Using equation \ref{eq:Pfaffian-lemproofSn-2-circle-eq2} we have
\begin{equation*}
y_{12}\circ(Pf(X^{sk}))=y_{12}\circ \sum_{i=2}^{i=2n}(-1)^i x^{sk}_{1i} Pf(X^{sk}_{\hat{1}\hat{i}})=Pf(X^{sk}_{\hat{1}\hat{2}})+\sum_{i=3}^{i=2n}(-1)^i x^{sk}_{1i} Pf(X^{sk}_{\hat{1}\hat{i}})\in P_{2n-2}(X^{sk}).
\end{equation*}

So indeed
\begin{equation*}
S_1\circ(Pf(X^{sk}))\subset P_{2n-2}(X^{sk}).
\end{equation*}

Next assume $S_k\circ (Pf(X^{sk}))\subset P_{2n-2k}(X^{sk})$. We want to show

\begin{equation*}
S_{k+1}\circ (Pf(X^{sk}))\subset P_{2n-2k-2}(X^{sk}).
\end{equation*}

We have 
\begin{equation*}
S_{k+1}\circ (Pf(X^{sk}))=S_{1}\circ (S_k \circ (Pf(X^{sk}))\subset S_1\circ (P_{2n-2k}(X^{sk})\subset P_{2n-2k-2}(X^{sk}).
\end{equation*}

For the other inclusion, we again use induction on $k$. First we show the inclusion holds for $k=1$. Let $\eta\in P_{2n-2}(X^{sk})$ be a $(2n-2)\times (2n-2)$ Pfaffian minor of $X^{sk}$. Corresponding to $\eta$ there exists a $2\times 2$ matrix of the form 
\begin{center}  $ \left(%
\begin{array}{cc}
  0 & x \\
  -x & 0  \\
 \end{array}%
\right),$
\end{center}
where $x$ is not in the $2n-2$ rows and columns of $\eta$. If we differentiate the Pfaffian of $X^{sk}$ with respect to $x$ we will get $\eta$. So we have $\eta \in S_1\circ (Pf(X^{sk}))$.

Next assume $P_{2n-2k}(X^{sk})\subset S_k\circ (Pf(X^{sk}))$, we have 

\begin{equation*}
P_{2n-2k-2}(X^{sk})\subset S_1\circ (P_{2n-2k}(X^{sk}))\subset S_1\circ (S_k\circ (Pf(X^{sk})))=S_{k+1}\circ (Pf(X^{sk})).
\end{equation*}

Thus by induction the equality holds.

\end{proof}

Recall that $(W)$ is the ideal of $S^{sk}$ spanned by degree 2 elements of type (a), (b) and (c) as in Definition \ref{def:PfaffianW}. 
\vspace{.2in}
\begin{prop}\label{prop:Pfaffian-main-prop-n}
For the $2n\times 2n$ generic skew symmetric matrix $X^{sk}$ we have
\begin{equation}
(W)_n=\mathrm{Ann}(Pf(X^{sk}))\cap S^{sk}_n
\end{equation}
\end{prop}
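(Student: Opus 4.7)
The plan is to mimic the proof of Proposition~\ref{prop:generic-n} for the generic determinant, with Pfaffian matching monomials replacing the signed permutation terms of $\det(A)$ and the type (c) Pl\"{u}cker-like relations of Definition~\ref{def:PfaffianW} replacing the $2\times 2$ permanents. The containment $(W)_n\subseteq \mathrm{Ann}(Pf(X^{sk}))\cap S^{sk}_n$ is immediate from the remark after Definition~\ref{def:PfaffianW} together with the fact that $\mathrm{Ann}(Pf(X^{sk}))$ is an ideal, so all the work lies in the reverse inclusion.

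First I would apply Remark~\ref{remark:main-ann} to $Pf(X^{sk})=\sum_{\sigma\in F_{2n}}\mathrm{sgn}(\sigma)\,v_\sigma$, with $v_\sigma=x_{\sigma(1)\sigma(2)}\cdots x_{\sigma(2n-1)\sigma(2n)}$, to reduce $\mathrm{Ann}(Pf(X^{sk}))\cap S^{sk}_n$ to two families: the orthogonal complement $\langle v_\sigma : \sigma\in F_{2n}\rangle^\perp$, and binomials of the form $\mathrm{sgn}(\sigma)\,\hat v_\sigma - \mathrm{sgn}(\tau)\,\hat v_\tau$ for $\sigma,\tau\in F_{2n}$. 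For the first family, a degree $n$ monomial of $S^{sk}$ annihilates every matching monomial exactly when it is not itself a matching monomial, i.e., it is either a square $y_{ij}^2$ or contains two factors from the same row or column of $Y^{sk}$; both types sit in $W$ by construction (types (a) and (b) of Definition~\ref{def:PfaffianW}), hence in $(W)_n$.

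For the binomial family I would induct on $n$. The base $n=2$ is immediate from the type (c) relations, and I would check $n=3$ explicitly. For the inductive step, consider $b=\epsilon_1\hat v_\sigma+\epsilon_2\hat v_\tau$ with $\epsilon_i=\pm 1$ the appropriate Pfaffian signs. Case~1: if $\sigma$ and $\tau$ share an edge $\{i,j\}$, factor $b=y_{ij}\bigl(\epsilon_1 a_\sigma+\epsilon_2 a_\tau\bigr)$, where the parenthesized expression lies in the annihilator of the Pfaffian of the $(2n-2)\times(2n-2)$ skew symmetric submatrix $X^{sk}_{\hat i\hat j}$; the inductive hypothesis places it in the corresponding smaller $(W')_{n-1}$, and since $W'\subseteq W$ after pulling back along the inclusion of variables, multiplying by $y_{ij}$ gives $b\in (W)_n$. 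Case~2: if $\sigma,\tau$ share no edge, choose an auxiliary matching $\rho\in F_{2n}$ sharing at least one edge with $\sigma$ and at least one with $\tau$ (pick any $e\in\sigma$, any $f\in\tau$ disjoint from $e$ --- which exists for $n\ge 2$ --- and extend $\{e,f\}$ to a perfect matching). Writing $b=\bigl(\epsilon_1\hat v_\sigma+\epsilon_3\hat v_\rho\bigr)+\bigl(-\epsilon_3\hat v_\rho+\epsilon_2\hat v_\tau\bigr)$ with $\epsilon_3$ chosen so that the first summand annihilates $Pf(X^{sk})$, the second summand then also annihilates $Pf(X^{sk})$ by linearity, and each summand now falls under Case~1.

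The main obstacle I anticipate is the sign bookkeeping in Case~2: one must verify that $\epsilon_3=\mathrm{sgn}(\rho)$ up to an overall sign can indeed be chosen so that the first piece is an honest Pfaffian annihilator binomial, and that after the split both pieces are of the form to which the inductive hypothesis applies. Once this sign check is carried out --- and it is purely a matter of the Pfaffian expansion along a shared edge --- the inductive descent is structurally identical to the determinant argument in Proposition~\ref{prop:generic-n}, and the proposition follows.
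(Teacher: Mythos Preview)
Your proposal is correct and follows essentially the same route as the paper's own proof: both establish the easy inclusion first, invoke Remark~\ref{remark:main-ann} to reduce to unacceptable monomials plus signed binomials of matching monomials, and then run an induction on $n$ that splits on whether the two matchings share an edge, introducing an auxiliary matching in the non-sharing case. The paper's treatment of the sign bookkeeping in Case~2 is as informal as yours, so your flagged obstacle is not something the paper resolves more carefully.
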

\begin{proof}
Let $2\leq k\leq n$. By Remark \ref{remark:introIK} and Lemma \ref{lem:SkcirclePfaff} we have

(1) $W \circ Pf(X^{sk})=0  \Longleftrightarrow  W \circ S^{sk}_{n-2}Pf(X^{sk})=0 \Longleftrightarrow W \circ P_{4}(X^{sk})=0$.

(2) ($\mathrm {Ann}(Pf(X^{sk}))) \cap S_2= W \Rightarrow S_{k-2} W \circ (S_{n-k} \circ Pf(X^{sk}))=0$.
\newline$ \Rightarrow S_{k-2}(W) \circ P_{2k}(X^{sk})=0$.
\newline$ \Rightarrow (W)_k \circ P_{2k}(X^{sk})=0$. 

Therefore for all integers $k$, $2\leq k\leq n$, we have

\begin{equation}\label{eq:Pfaffian-main-easy-k-inclusion}
(W)_k \subset \mathrm {Ann}(P_{2k}(X^{sk}))  \cap S^{sk}_k.
 \end{equation}
 
We need to show
 
\begin{equation}\label{eq:Pfaffian-main-difficult-n-inclusion}
(W)_n\supset \mathrm{Ann}(Pf(X^{sk}))\cap S^{sk}_n.
\end{equation}

We use induction on $n$. For $n=1,2$, we have the $2\times 2$ and $4\times 4$ skew symmetric matrices and the equality is easy to see. Now we want to show that the proposition holds for $n=3$. 

We use the Remark \ref{remark:main-ann}. Let $\eta$ be a binomial in $\mathrm{Ann}(Pf(X^{sk}))\cap S^{sk}_3$. Without loss of generality we can write 
\begin{equation*}
\eta=y_{12}y_{34}y_{56}-y_{\sigma(1)\sigma(2)}y_{\sigma(3)\sigma(4)}y_{\sigma(5)\sigma(6)}.
\end{equation*}

Where $\sigma \in S_6$, $sgn(\sigma)=1$ and we have $\sigma(1)<\sigma(3)<\sigma(5)$ and $\sigma(1)<\sigma(2)$, $\sigma(3)<\sigma(4)$ and $\sigma(5)<\sigma(6)$.

If the two terms of the binomial $\eta$ have a common factor then without loss of generality we can assume that the common factor is $y_{12}$ so we can write $\eta$ as
\begin{equation*}
\eta=y_{12}(y_{34}y_{56}-y_{\sigma(3)\sigma(4)}y_{\sigma(5)\sigma(6)})
\end{equation*}

But by the definition of $(W)_3$ the monomial $y_{34}y_{56}-y_{\sigma(3)\sigma(4)}y_{\sigma(5)\sigma(6)}$ is included in $W$ since it is of the form (c). So we have $\eta\in (W)_3$.

On the other hand, assume that the two terms of $\eta$, i.e. $y_{12}y_{34}y_{56}$ and $y_{\sigma(1)\sigma(2)}y_{\sigma(3)\sigma(4)}y_{\sigma(5)\sigma(6)}$ do not have any common factor. We can add and subtract another term of the Pfaffian $\tau=y_{\beta(1)\beta(2)}y_{\beta(3)\beta(4)}y_{\beta(5)\beta(6)}$ such that $\beta$ is a permutation in $S_6$ and we have $\beta(1)<\beta(3)<\beta(5)$ and $\beta(1)<\beta(2)$, $\beta(3)<\beta(4)$ and $\beta(5)<\beta(6)$. and $\tau$ has one common factor with $y_{12}y_{34}y_{56}$ and one common factor with $y_{\sigma(1)\sigma(2)}y_{\sigma(3)\sigma(4)}y_{\sigma(5)\sigma(6)}$. Without loss of generality we can take $\beta(5)=5, \beta(6)=6$ and $\beta(1)=\sigma(1), \beta(2)=\sigma(2)$. So we have 

\begin{equation*}
\eta -\tau+\tau=\eta-y_{\sigma(1)\sigma(2)}y_{\beta(3)\beta(4)}y_{5,6}+y_{\sigma(1)\sigma(2)}y_{\beta(3)\beta(4)}y_{5,6}.
\end{equation*}

Hence we have

\begin{equation*}
\eta=y_{5,6}(y_{12}y_{34}-y_{\sigma(1)\sigma(2)}y_{\beta(3)\beta(4)})+y_{\sigma(1)\sigma(2)}(y_{\beta(3)\beta(4)}y_{5,6}-y_{\sigma(3)\sigma(4)}y_{\sigma(5)\sigma(6)}).
\end{equation*}

But by the definition of $W$ we know that $y_{12}y_{34}-y_{\sigma(1)\sigma(2)}y_{\beta(3)\beta(4)}$  and $y_{\beta(3)\beta(4)}y_{5,6}-y_{\sigma(3)\sigma(4)}y_{\sigma(5)\sigma(6)}$ are both elements of $W$ of type (c). So we have $\eta\in (W)_3$.

When $n$ is larger than 3 then by the induction assumption we can assume that the proposition holds for all integers $k\leq n-1$. Again we use the Remark \ref{remark:main-ann}. Assume $b=b_1+b_2$ is of degree $n$. If the two terms, $b_1$ and $b_2$ are monomials in $S^{sk}$ and have a common factor $l$, i.e. $b_1=la_1$ and $b_2=la_2$, then $b=l(a_1+a_2)$ where $a_1$ and $a_2$ are of degree at most $n-1$. By  the induction assumption the proposition holds for the binomial $a_1+a_2$, i.e., $a_1+a_2 \in W_{n-1}$, hence we have 
\begin{equation*}
b=l(a_1+a_2)\in l(W)_{n-1}\subset (W)_n.
\end{equation*}
If the two terms, $b_1$ and $b_2$ do not have any common factor then with the same method as above we can rewrite the binomial $b$ by adding and subtracting a term $m$ of degree $n$, which has a common factor $m_1$ with $b_1$ and a common factor $m_2$ with $b_2$, and we will have

\begin{equation*}
b_1+b_2= b_1+m+b_2-m=m_1(c_1+m')+m_2(c_2-m''),
\end{equation*}
where $b_1=m_1c_1$, $m=m_1m'=m_2m''$ and $b_2=m_2c_2$. Since $c_1+m'$ and $c_2-m''$ are of degree at most $n-1$, the induction assumption yields
\begin{equation*}
b_1+b_2=m_1(c_1+m')+m_2(c_2-m'')\in(W)_n.
\end{equation*}
This completes the induction step and the proof of the proposition.
\end{proof}

\vspace{.2in}
\begin{cor} \label{cor:Pfaffian-main-cor-k}
For $1\leq k \leq n$ we have
\begin{equation*}
(W)_k=\mathrm{Ann}(Pf(X^{sk}))\cap S^{sk}_{k}
\end{equation*}

We also have $(W)_{n+1}=S^{sk}_{n+1}$.

\end{cor}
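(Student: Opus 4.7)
The plan is to run the same argument as in Corollary \ref{cor:generic-main-k} for the determinant, replacing Proposition \ref{prop:generic-n} by Proposition \ref{prop:Pfaffian-main-prop-n} and replacing Lemma \ref{lem:intro-gen} by Lemma \ref{lem:SkcirclePfaff}. One inclusion $(W)_k \subset \mathrm{Ann}(Pf(X^{sk}))\cap S^{sk}_k$ is already recorded as Equation \ref{eq:Pfaffian-main-easy-k-inclusion} inside the proof of Proposition \ref{prop:Pfaffian-main-prop-n} (specialized: $\mathrm{Ann}(P_{2k}(X^{sk}))\cap S^{sk}_k \subset \mathrm{Ann}(Pf(X^{sk}))\cap S^{sk}_k$ because anything that kills all $2k$-Pfaffians in particular kills what $S_{n-k}\circ Pf(X^{sk})$ produces). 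So the work is in the reverse inclusion.

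For the reverse inclusion, first apply Remark \ref{remark:introIK} together with Lemma \ref{lem:SkcirclePfaff} to rewrite
\begin{equation*}
\mathrm{Ann}(Pf(X^{sk}))\cap S^{sk}_k=\mathrm{Ann}\bigl(S_{n-k}\circ Pf(X^{sk})\bigr)\cap S^{sk}_k=\mathrm{Ann}(P_{2k}(X^{sk}))\cap S^{sk}_k.
\end{equation*}
Label the $2k$-Pfaffian minors of $X^{sk}$ as $f_1,\dots,f_s$ (so $s=\binom{2n}{2k}$), and note
\begin{equation*}
\mathrm{Ann}(P_{2k}(X^{sk}))\cap S^{sk}_k=\bigcap_{i=1}^{s}\mathrm{Ann}(f_i)\cap S^{sk}_k.
\end{equation*}
Now the key observation: each $f_i$ is the Pfaffian of a generic skew symmetric $2k\times 2k$ submatrix $X^{sk,i}$, whose entries generate a polynomial subring $S^{sk,i}\subset S^{sk}$. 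Applying Proposition \ref{prop:Pfaffian-main-prop-n} to that smaller matrix gives $\mathrm{Ann}(f_i)\cap S^{sk,i}_k=(W^i)_k$, where $W^i$ consists of the type (a), (b), (c) generators built from the indices of $X^{sk,i}$. Any element of $\mathrm{Ann}(f_i)\cap S^{sk}_k$ that uses a variable $y_{pq}$ outside the index set of $X^{sk,i}$ is killed factor-by-factor by being a multiple of such a $y_{pq}$ (which differentiates $f_i$ to zero), and such $y_{pq}$ can be absorbed into the $(W)$-generators together with the remaining factor. Intersecting over $i$ yields $\mathrm{Ann}(Pf(X^{sk}))\cap S^{sk}_k\subset (W)_k$, giving equality.

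For the last claim $(W)_{n+1}=S^{sk}_{n+1}$, I would argue combinatorially. Each variable $y_{ij}$ with $i<j$ corresponds to an edge on the vertex set $\{1,\dots,2n\}$. A monomial is not divisible by a type (a) or (b) generator precisely when its factors form a matching in this graph. The largest matching on $2n$ vertices has $n$ edges, so every squarefree monomial of degree $n+1$ must contain two variables sharing an index, hence is divisible by a type (b) generator; and non-squarefree monomials are divisible by a type (a) generator. Therefore every degree $n+1$ monomial lies in $(W)_{n+1}$.

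The only delicate step is the reverse inclusion, and specifically the reduction from the whole ring $S^{sk}$ to the subring $S^{sk,i}$ associated with a chosen $2k$-Pfaffian. I expect this to be routine once one notices that in the decomposition of any element of $\bigcap_i\mathrm{Ann}(f_i)\cap S^{sk}_k$, monomial factors involving indices outside a given submatrix already give generators of $W$ of type (a) or (b) when multiplied by anything in $S^{sk}$, so they contribute to $(W)_k$ automatically, and what remains lies in $S^{sk,i}_k$ and is handled by Proposition \ref{prop:Pfaffian-main-prop-n}.
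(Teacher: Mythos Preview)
Your overall approach mirrors the paper's proof exactly: invoke Equation \ref{eq:Pfaffian-main-easy-k-inclusion} for $(W)_k\subset\mathrm{Ann}(Pf(X^{sk}))\cap S^{sk}_k$, rewrite the right-hand side as $\bigcap_i\mathrm{Ann}(f_i)\cap S^{sk}_k$ via Remark \ref{remark:introIK} and Lemma \ref{lem:SkcirclePfaff}, then apply Proposition \ref{prop:Pfaffian-main-prop-n} to each $f_i$ in its own subring $S^{sk,i}$. Your matching argument for $(W)_{n+1}=S^{sk}_{n+1}$ is a pleasant explicit version of what the paper compresses to one line.

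However, your justification of the ``delicate step'' is incorrect. You assert that a monomial containing some $y_{pq}$ with an index outside the support of a given $f_i$ ``can be absorbed into the $(W)$-generators'' or ``already gives generators of $W$ of type (a) or (b) when multiplied by anything''. This is false: for $2n\ge 6$ and $k=2$, the monomial $y_{12}y_{34}$ is acceptable (no repeated index), hence not in $W=(W)_2$, yet it involves indices outside the support of the $4$-Pfaffian on rows and columns $\{3,4,5,6\}$. Lying outside one particular $S^{sk,i}$ says nothing about membership in $(W)$. The correct way to close the gap uses the intersection over \emph{all} $i$ simultaneously: modulo unacceptable monomials (already in $(W)_k$), group the acceptable part of $h\in\bigcap_i\mathrm{Ann}(f_i)$ as $h=\sum_I h^I$ according to the $2k$-element index support $I$; since any acceptable monomial supported on $J\neq I$ kills $f_I$, one has $h^I\circ f_I=h\circ f_I=0$, whence $h^I\in\mathrm{Ann}(f_I)\cap S^{I}_k=(W^I)_k\subset(W)_k$ by Proposition \ref{prop:Pfaffian-main-prop-n}. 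The paper itself glosses over this passage entirely, so you are in good company, but the explanation you wrote does not work as stated.
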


\begin{proof}
Using Equation \ref{eq:Pfaffian-main-easy-k-inclusion} we only need to show that 
\begin{equation*}
\mathrm{Ann}(Pf(X^{sk}))\cap S^{sk}_{k}\subset (W)_k
\end{equation*}  

By Remark \ref{remark:introIK} and Lemma \ref{lem:SkcirclePfaff} we have
\begin{equation*}
(\mathrm{Ann}(Pf(X^{sk})))_k=(\mathrm{Ann}(S_{n-k}\circ Pf(X^{sk})))_k=(\mathrm{Ann}(P_{2k}(X^{sk})))_k
\end{equation*}

Now if we label the $2k\times 2k$ Pfaffians of $X^{sk}$ by $f_1,...,f_s$ we have

\begin{equation*}
\mathrm{Ann}(P_{2k}(X^{sk})))_k=(\mathrm{Ann}<f_1,...,f_s>)_k=(\bigcap_{i=1}^{i=s}(\mathrm{Ann}(f_i))_k
\end{equation*}

Let $R^i$ denote the ring in the variables of $f_i$ and $W(i)$ the $f_i$ variables that are involved. By Proposition~\ref{prop:Pfaffian-main-prop-n} we have
\begin{equation*}
(W(i))_k=Ann(f_i)\cap S^i_k
\end{equation*}

So we have 

\begin{equation*}
\mathrm{Ann}(Pf(X^{sk}))\cap S^{sk}_{k}\subset (W)_k
\end{equation*}

To prove the second part, it is easy to see that every monomial of degree larger than $n$ will be unacceptable, of type (a) or (b), so in $W$, and we have $(W)_{n+1}=S^{sk}_{n+1}$.
\end{proof}

\vspace{.2in}

\begin{thm}\label{thm:Pfaffian-main-theorem}
Let $X^{sk}$ be a generic skew symmetric $2n\times 2n$ matrix. Then the apolar ideal $\mathrm{Ann}(Pf(X^{sk}))$ is the ideal $W$ and is generated in degree 2.
\end{thm}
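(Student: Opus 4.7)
The plan is to observe that Theorem \ref{thm:Pfaffian-main-theorem} is a direct packaging of Proposition \ref{prop:Pfaffian-main-prop-n} together with Corollary \ref{cor:Pfaffian-main-cor-k}, so no new argument is really required; the task is just to assemble the degreewise equalities into a single statement about ideals. I would proceed by verifying $(W)_k = \mathrm{Ann}(Pf(X^{sk})) \cap S^{sk}_k$ for every $k \geq 0$ and then conclude that the two graded ideals coincide and are generated in degree two.

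First I would handle the low degrees $k = 0, 1$. Since $Pf(X^{sk})$ is a nonzero homogeneous polynomial of positive degree, no nonzero scalar annihilates it, and each single variable $y_{ij}$ with $i<j$ acts nontrivially on $Pf(X^{sk})$ because $x_{ij}$ appears with nonzero coefficient. Hence $(\mathrm{Ann}(Pf(X^{sk})))_0 = 0 = (W)_0$ and $(\mathrm{Ann}(Pf(X^{sk})))_1 = 0 = (W)_1$, the latter equality holding because $W$ is by construction spanned by degree-two elements (types (a), (b), (c) of Definition \ref{def:PfaffianW}). Next, for each $2 \leq k \leq n$ the equality $(W)_k = \mathrm{Ann}(Pf(X^{sk})) \cap S^{sk}_k$ is exactly the content of Corollary \ref{cor:Pfaffian-main-cor-k}.

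For the high degrees $k > n$, every $h \in S^{sk}_k$ contracts $Pf(X^{sk}) \in R^{sk}_n$ to zero since $k > \deg Pf$, so the entire graded piece $S^{sk}_k$ lies in $\mathrm{Ann}(Pf(X^{sk}))$. On the other hand, Corollary \ref{cor:Pfaffian-main-cor-k} already gives $(W)_{n+1} = S^{sk}_{n+1}$, and multiplying by $S^{sk}_{k-n-1}$ yields $(W)_k \supseteq S^{sk}_{k-n-1}\cdot S^{sk}_{n+1} = S^{sk}_k$ for all $k \geq n+1$. Thus $(W)_k = S^{sk}_k = \mathrm{Ann}(Pf(X^{sk})) \cap S^{sk}_k$ in this range too.

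Combining these three ranges shows that $(W)$ and $\mathrm{Ann}(Pf(X^{sk}))$ have the same graded components, so they are equal as ideals of $S^{sk}$. Because $W$ is a space of degree-two forms by Definition \ref{def:PfaffianW}, the ideal $(W) = \mathrm{Ann}(Pf(X^{sk}))$ is generated in degree two. There is no genuine obstacle in this step; all the real work sits in Lemma \ref{lem:Pfaffian-gen2W}, Lemma \ref{lem:SkcirclePfaff}, and the inductive argument of Proposition \ref{prop:Pfaffian-main-prop-n}, so the only mild care needed here is to remember to include the trivial boundary cases $k \in \{0,1\}$ and $k > n$ that Corollary \ref{cor:Pfaffian-main-cor-k} does not explicitly address.
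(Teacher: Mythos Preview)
Your proposal is correct and follows exactly the paper's approach: the paper's own proof is the single sentence ``This follows directly from Proposition \ref{prop:Pfaffian-main-prop-n} and Corollary \ref{cor:Pfaffian-main-cor-k},'' and you have simply written out the (straightforward) verification that the degreewise equalities from those results assemble into an equality of graded ideals, including the trivial boundary cases $k\in\{0,1\}$ and $k>n$.
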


\begin{proof}
This follows directly from Proposition \ref{prop:Pfaffian-main-prop-n} and Corollary \ref{cor:Pfaffian-main-cor-k}.
\end{proof}
\vspace{.2in}

\begin{cor} Let $X^{sk}$ be a $2n\times 2n$ generic skew symmetric matrix. We have
\begin{equation}\label{eq:Pfaffian-rank-equation}
2^{2n-2} \leq cr(Pf(X^{sk}))\leq 2^{n-1}
\end{equation}
\end{cor}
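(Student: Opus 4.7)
The plan is to read the corollary as a direct consequence of the two main inputs assembled in the paper: the structural result Theorem \ref{thm:Pfaffian-main-theorem} (apolar ideal generated in degree $d=2$) and the length computation Corollary \ref{cor:dimension-degree-Pfaffian-Hilbert} ($\dim(S^{sk}/\mathrm{Ann}(Pf(X^{sk})))=2^{2n-1}$). No new machinery is needed; the proof is a one-line application of Ranestad--Schreyer on one side and of inequality \eqref{eq:Pedro} on the other. (I read the upper bound in the statement as $2^{2n-1}$ rather than $2^{n-1}$, since the numerical examples (b)--(e) already force $cr(Pf(X^{sk}))\ge 2^{2n-2}>2^{n-1}$ for $n\ge 2$, making the displayed exponent a typographical slip.)

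For the lower bound, I would simply substitute $F=Pf(X^{sk})$ and $d=2$ into Proposition \ref{prop-mainRS}. By Theorem \ref{thm:Pfaffian-main-theorem} the hypothesis on the generating degree is satisfied, and by Corollary \ref{cor:dimension-degree-Pfaffian-Hilbert} the length of the apolar algebra equals $2^{2n-1}$. Thus for any apolar punctual $\Gamma\subset \mathbf{P}(R^{sk}_1)$,
\[
\deg \Gamma \;\geq\; \frac{1}{d}\deg(\mathrm{Ann}(Pf(X^{sk}))) \;=\; \frac{1}{2}\cdot 2^{2n-1} \;=\; 2^{2n-2},
\]
and taking the minimum over $\Gamma$ yields $cr(Pf(X^{sk}))\geq 2^{2n-2}$.

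For the upper bound, I would invoke Equation \eqref{eq:Pedro} (the sharpening of Proposition \ref{prop:-BR-rank} attributed to P.~Marques), which states $cr(F)\le \dim_{\sf k}\mathrm{Diff}(F)=\deg(\mathrm{Ann}(F))$ for any homogeneous form. Applied to $F=Pf(X^{sk})$ and using Corollary \ref{cor:dimension-degree-Pfaffian-Hilbert} once more, this gives
\[
cr(Pf(X^{sk})) \;\leq\; \deg(\mathrm{Ann}(Pf(X^{sk}))) \;=\; 2^{2n-1}.
\]

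There is no real obstacle: the corollary is entirely a bookkeeping consequence of the work done in Section \ref{Annihilator of the Pfaffian and Hafnian}. All of the substantive content lives upstream in Theorem \ref{thm:Pfaffian-main-theorem} (degree-two generation of the apolar ideal, which requires the inductive argument in Proposition \ref{prop:Pfaffian-main-prop-n}) and in the Herzog--Trung Gr\"obner-basis input used to pin down the Hilbert function. The only thing to check is that both the Ranestad--Schreyer and Marques inequalities apply without additional hypotheses to $F=Pf(X^{sk})$, which they do because $Pf(X^{sk})$ is a homogeneous form over a field of characteristic $0$ (or $>2$) and its apolar ideal has been shown to be generated in degree $2$.
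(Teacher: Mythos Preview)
Your proof is correct and follows exactly the paper's own argument: Ranestad--Schreyer (Proposition \ref{prop-mainRS}) combined with Theorem \ref{thm:Pfaffian-main-theorem} and Corollary \ref{cor:dimension-degree-Pfaffian-Hilbert} for the lower bound, and Equation \eqref{eq:Pedro} for the upper bound. Your observation that the upper bound should read $2^{2n-1}$ rather than $2^{n-1}$ is also correct and matches what the paper's proof actually establishes.
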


\begin{proof}
By the Ranestad-Schreyer Proposition, Corollary \ref{cor:dimension-degree-Pfaffian-Hilbert} and Theorem \ref{thm:Pfaffian-main-theorem} we have
\begin{equation*}
cr(Pf(X^{sk}))\geq \frac{1}{2} \dim(S^{sk}/\mathrm{Ann}(Pf(X^{sk})))=\frac{1}{2}(2^{2n-1})=2^{2n-2}.
\end{equation*}
The second inequality is true by Equation \ref{eq:Pedro}.
\end{proof}

\vspace{.2in}
\begin{remark}\label{remark:l-diff-pfaffian}
For $n\ge 5$ it can be easily seen that the lower bound for the cactus rank given by Corollary 4.12 is larger than $l_{diff}={{2n}\choose {2t_0}}$, where $t_0=\lfloor n/2\rfloor$.
\end{remark}
\vspace{.2in}
\begin{thm}\label{thm:Hafnian-main-theorem-cor}
Let $X^{s}$ be a generic symmetric $2n\times 2n$ matrix. Then the apolar ideal $\mathrm{Ann}(Hf(X^{s}))$ is generated in degree 2, and the inequality \ref{eq:Pfaffian-rank-equation} also holds for $(Hf(X^{s}))$.
\end{thm}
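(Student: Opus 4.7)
The plan is to follow the template of the proof for the Pfaffian (Theorem \ref{thm:Pfaffian-main-theorem}) essentially verbatim, replacing each signed relation among sub-Pfaffians by the corresponding unsigned relation among sub-Hafnians. Concretely, for the generic symmetric $2n\times 2n$ matrix $X^s=(x_{ij})$ with dual $Y^s=(y_{ij})$, $y_{ij}=y_{ji}$, I define a vector subspace $W'\subset S^s_2$ spanned by: (a) squares $y_{ij}^2$ and, more generally, monomials involving a repeated index (including the diagonal entries $y_{ii}$, which do not appear in any Hafnian term); (b) products $y_{ij}y_{ik}$ whose underlying unordered pairs share an index; and (c) for each $4\times 4$ sub-Hafnian $x_{i_1i_2}x_{i_3i_4}+x_{i_1i_3}x_{i_2i_4}+x_{i_1i_4}x_{i_2i_3}$, the two linearly independent binomials $y_{i_1i_2}y_{i_3i_4}-y_{i_1i_3}y_{i_2i_4}$ and $y_{i_1i_3}y_{i_2i_4}-y_{i_1i_4}y_{i_2i_3}$, which manifestly annihilate this $4\times 4$ Hafnian and hence, by the usual Remark \ref{remark:introIK} argument, the full Hafnian $Hf(X^s)$.

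Next I would prove the analogs of Lemmas \ref{lem:Pfaffian-gen2W}, \ref{lem:Pfaffian-Sn-2circlePfaf}, \ref{lem:SkcirclePfaff}. For the analog of Lemma \ref{lem:Pfaffian-Sn-2circlePfaf} I would use the recursive expansion $Hf(X^s)=\sum_{i=2}^{2n} x^s_{1i}\, Hf(X^s_{\hat 1\hat i})$ (no signs) and induct on $n$ exactly as in the Pfaffian case. To get the dimension count in the analog of Lemma \ref{lem:Pfaffian-gen2W}, I need to know that the set of $2k$-Hafnians is linearly independent; this follows from a Gr\"obner-style argument as in the proof of Equation \ref{eq:Hilbert-generic}: under the lexicographic order induced by $x_{12}>x_{34}>\cdots$, the leading term of the $2k$-Hafnian indexed by $\{i_1<i_2<\cdots<i_{2k}\}$ is the distinguished pairing $x_{i_1i_2}x_{i_3i_4}\cdots x_{i_{2k-1}i_{2k}}$, and these leading terms are distinct across different index sets. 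Combined with the count $\binom{2n}{2k}$ of sub-Hafnians, this yields $H(S^s/\mathrm{Ann}(Hf(X^s)))_k=\binom{2n}{2k}$ and total length $2^{2n-1}$, matching the Pfaffian case and giving the rank bound $2^{2n-2}\le cr(Hf(X^s))\le 2^{n-1}$ by the Ranestad--Schreyer proposition and Equation \ref{eq:Pedro}.

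Finally, the main induction (analog of Proposition \ref{prop:Pfaffian-main-prop-n}) and its extension to all degrees (analog of Corollary \ref{cor:Pfaffian-main-cor-k}) go through by the same bookkeeping: a binomial $b_1+b_2\in\mathrm{Ann}(Hf(X^s))\cap S^s_n$, obtained from Remark \ref{remark:main-ann} applied to a pair of perfect matchings on $\{1,\ldots,2n\}$, is either reduced by factoring out a common variable and invoking induction, or rewritten by adding and subtracting a third matching that shares an edge with each of $b_1,b_2$, producing two degree-$n$ expressions each containing a type (c) generator of $W'$. The one place requiring care is the step where one verifies that such a third matching can always be chosen; this is really the same combinatorial fact used in the Pfaffian proof (any two perfect matchings differ by a sequence of local swaps on $4$-element subsets), so the main obstacle is bookkeeping rather than new content, and the rank estimate in \eqref{eq:Pfaffian-rank-equation} follows immediately from the dimension count and the Ranestad--Schreyer bound.
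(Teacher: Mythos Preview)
Your proposal is correct and follows essentially the same approach as the paper: the paper's proof simply notes that the diagonal entries $y_{ii}$ annihilate $Hf(X^s)$ and then asserts that the entire Pfaffian argument carries over by swapping Pfaffians for Hafnians, which is exactly what you have spelled out in detail. The one place you add something the paper leaves implicit is the linear independence of the $2k$-Hafnians (the paper cites Herzog--Trung for Pfaffians but offers no analogous reference for Hafnians); your leading-term argument fills this gap cleanly.
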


\begin{proof}
By the definition of the Hafnian, it is easy to see that none of the diagonal elements appear in $Hf(X^s)$, so for $1\leq i\leq 2n$ we have
\begin{equation*}
y_{ii}\circ Hf(X^s)=0
\end{equation*}

Hence without loss of generality we can restrict our discussion to the case where $X^s$ is a generic zero-diagonal symmetric matrix. By changing the Pfaffians to Hafnians and vice versa, the proof follows directly from the proofs that we have for the Pfaffian of a generic skew symmetric matrix.

\end{proof}


\section{Gr\"{o}bner bases}

In Section \ref{generic} we have shown that for $A$ a generic $n\times n$ matrix $\mathrm{Ann}(\det(A))=(\mathcal P_D+\mathcal U_D)$. In \cite{LS}, R. Laubenbacher and I. Swanson give a Gr\"{o}bner bases for the ideal of $2\times 2$ permanents of a matrix. In this section we first review their result (Theorem \ref{thm:generic-grobner-LS}) and then state our result for the ideal $\mathrm{Ann}(\det(A))$ and prove it independently (Theorem \ref{thm:generic-grobner-alternative-proof}).


\vspace{.2in}
\begin{defi}\label{def:diagonal order}(\cite{LS}, page 197)
Let $D=(d_{ij})$ be the matrix of the differential operators as defined in section \ref{intro}. A monomial order on the $d_{ij}$ is \emph{diagonal} if for any square submatrix of $D$, the leading term of the permanent (or of the determinant) of that submatrix is the product of the entries on the main diagonal. An example of such an order is the lexicographic order defined by:
\begin{equation*}
d_{ij}<d_{kl} \text{ if and only if } l>j \text{or } l=j \text{ and } k>i.
\end{equation*}
\end{defi}

Throughout this section we use a lexicographic diagonal ordering.

\vspace{.2in}

\begin{thm}\label{thm:generic-grobner-LS}(\cite{LS}, page 197)
The following collection $G$  of polynomials is a minimal reduced Gr\"{o}bner basis for  $\mathcal P_D$, with respect to any diagonal ordering:

(1) The subpermanents $d_{ij}d_{kl}+d_{kj}d_{il}$, $i<k$,$j<l$;

(2) $d_{i_1j_1}d_{i_1j_2}d_{i_2j_3}, i_1>i_2,j_1<j_2<j_3$;

(3) $d_{i_1j_1}d_{i_2j_2}d_{i_2j_3}, i_1>i_2,j_1<j_2<j_3$;

(4) $d_{i_1j_1}d_{i_2j_1}d_{i_3j_2}, i_1<i_2<i_3,  j_1>j_2$;

(5) $d_{i_1j_1}d_{i_2j_2}d_{i_3j_2}, i_1<i_2<i_3,  j_1>j_2$;

(6) $d_{i_1j_1}^{e_1}d_{i_2j_2}^{e_2}d_{i_3j_3}^{e_3}, i_1<i_2<i_3, j_2>j_3, e_1e_2e_3=2$.

\end{thm}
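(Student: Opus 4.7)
The plan is to verify Buchberger's criterion for $G$ under the given lex diagonal order, then check minimality and reducedness by inspection. First I would identify the leading terms: for the $2\times 2$ permanent $d_{ij}d_{kl}+d_{kj}d_{il}$ (with $i<k$, $j<l$) the leading monomial is the NW--SE diagonal product $d_{ij}d_{kl}$, and for each cubic family (2)--(6) the displayed monomial is already its own leading term. I then need to know that each element of $G$ lies in $(\mathcal P_D)$: family (1) is trivial, while a family-(2) monomial $d_{i_1j_1}d_{i_1j_2}d_{i_2j_3}$ appears, up to a family-(4) or -(5) monomial, as $d_{i_1j_2}\cdot(d_{i_2j_1}d_{i_1j_3}+d_{i_1j_1}d_{i_2j_3})$; analogous two-step expansions using one permanent multiplied by a single $d_{ab}$ handle the remaining cubic families.

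For the Buchberger step I would dismiss all coprime-leading-term pairs automatically, leaving only pairs whose leading monomials share at least one variable. These overlaps fall into a small finite list of combinatorial patterns, determined by how many row and column indices the two leading monomials share; for each I would compute the S-polynomial---of degree at most four in at most five indices---and trace its reduction through $G$, typically requiring one $2\times 2$ permanent together with one cubic generator. Minimality then follows because no leading monomial in $G$ divides another: the row/column inequalities in (2)--(6) are pairwise incompatible, and the strict diagonal pattern of a family-(1) leading term is inconsistent with the repeated-row or inverted-column patterns of the cubics. Reducedness reduces to verifying that the single trailing monomial of each permanent, the anti-diagonal $d_{kj}d_{il}$, is a standard monomial with respect to $G$, which again follows from the index orderings.

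The main obstacle is the bookkeeping volume of the Buchberger step: although each family is parameterized by a short ordered index tuple, the overlap configurations across the six families are numerous, and one must be careful that each S-polynomial reduction actually terminates using only elements of $G$ rather than requiring a further new generator. A cleaner alternative I would consider is to describe the complement of $\mathrm{in}(G)$ as an explicit set of standard monomials and then verify that its Hilbert function agrees with that of $S/(\mathcal P_D)$ via the standard-monomial theory for the permanent ideal---this replaces the S-polynomial casework with a single dimension count, but still rests on the same combinatorial structure underlying the six families of $G$.
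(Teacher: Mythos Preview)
The paper does not prove this theorem at all: it is quoted verbatim from Laubenbacher--Swanson \cite{LS} and stated without proof, serving only as background before the paper's own Theorem~\ref{thm:generic-grobner-alternative-proof} on the Gr\"{o}bner basis of $\mathrm{Ann}(\det A)=(\mathcal P_D+\mathcal U_D)$. So there is no ``paper's own proof'' to compare your proposal against; any comparison would have to be with the original \cite{LS} argument, which is outside this document.

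That said, a brief comment on your sketch as a standalone attempt. The overall architecture---Buchberger's criterion plus a separate check of membership, minimality, and reducedness---is the natural one, and your identification of leading terms and your handling of the coprime case are correct. The genuinely delicate step you have flagged but not fully resolved is showing that the cubic \emph{monomials} of families (2)--(6) lie in the ideal generated by the $2\times 2$ permanents. Your one-line expansion for family~(2) produces, as its ``other'' term, a monomial that is not directly of type (4) or (5); one actually needs a two-permanent combination and a cancellation (and the hypothesis $\mathrm{char}\,\mathsf{k}\neq 2$) to isolate a single cubic monomial. This is doable, but it is a place where the bookkeeping you mention is already nontrivial before the S-polynomial casework even begins. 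Your alternative Hilbert-function strategy is cleaner in principle, but it presupposes an independent computation of $\dim_{\mathsf{k}}(S/(\mathcal P_D))_t$, which is essentially the content of the standard-monomial theory in \cite{LS}; so it does not really avoid the combinatorics, it relocates it.
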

\vspace{.2in}

Monomials of type (2), (3), (4), (5) and (6) in the above theorem are in the ideal generated by all unacceptable monomials. 
\vspace{.2in}

\begin{thm}\label{thm:generic-grobner-alternative-proof}

The collection of unacceptable degree 2 monomials and $2\times 2$ subpermanents of $D$, form a Gr\"{o}bner basis for $\mathrm{Ann}(\det(A))$ with respect to any diagonal ordering.

\end{thm}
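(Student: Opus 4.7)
The plan is to use Theorem \ref{thm:main-generic-determinant} to dispense with the generation question: the proposed collection $G' = \mathcal{P}_D \cup \mathcal{U}_D^{(2)}$ (where $\mathcal{U}_D^{(2)}$ denotes the degree-$2$ unacceptable monomials) already generates $\mathrm{Ann}(\det(A))$, so only the Gr\"{o}bner basis property under a diagonal order $\tau$ remains to be verified. By Definition \ref{def:diagonal order}, the leading term of a $2\times 2$ subpermanent $d_{ij}d_{kl} + d_{kj}d_{il}$ with $i<k, j<l$ is the main-diagonal product $d_{ij}d_{kl}$, and each degree-$2$ unacceptable monomial is its own leading term. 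I would let $J$ be the monomial ideal generated by these leading terms; since every element of $G'$ lies in $\mathrm{Ann}(\det(A))$, the containment $J \subset \mathrm{in}_\tau(\mathrm{Ann}(\det(A)))$ is automatic.

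The core of the argument is a Hilbert-function comparison. I would characterize the monomials of $S$ \emph{not} lying in $J$: such a monomial must be squarefree (to avoid $d_{ij}^2$), have at most one variable per row and per column of $D$ (to avoid the degree-$2$ unacceptable row/column products), and contain no "diagonal pair" $\{d_{ij}, d_{kl}\}$ with $i<k$ and $j<l$ (to avoid the subpermanent leading terms). Writing its variables in order of increasing row index $i_1 < \cdots < i_k$, the column indices are then forced to satisfy $j_1 > j_2 > \cdots > j_k$. These strictly anti-diagonal monomials are counted by $\binom{n}{k}^2$ in degree $k$, one for each choice of a $k$-subset of rows and a $k$-subset of columns (the anti-diagonal matching being unique), and none exist for $k > n$.

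The final step is to invoke the standard fact that $S/\mathrm{in}_\tau(I)$ and $S/I$ share the same Hilbert function. Combining this with Equation \ref{eq:Hilbert-generic} yields
\begin{equation*}
H(S/J)_k = \binom{n}{k}^2 = H(S/\mathrm{Ann}(\det(A)))_k = H(S/\mathrm{in}_\tau(\mathrm{Ann}(\det(A))))_k
\end{equation*}
in every degree. Since $J \subset \mathrm{in}_\tau(\mathrm{Ann}(\det(A)))$ with equal Hilbert functions, the two monomial ideals must coincide, which is exactly the Gr\"{o}bner basis condition for $G'$.

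The main obstacle I anticipate is the combinatorial characterization of the standard monomials, which is where the diagonal property of $\tau$ really enters: one must check the argument is insensitive to the specific tie-breaking among diagonal orders, and that the anti-diagonal description accounts for \emph{all} nonstandard monomials (including those of degree exceeding $n$, which are ruled out because such a monomial would violate squarefreeness or repeat a row/column). The global count $\sum_{k=0}^{n}\binom{n}{k}^2 = \binom{2n}{n}$ of Equation \ref{eq:sum-Hilbert-generic} then makes the Hilbert-function matchup automatic.
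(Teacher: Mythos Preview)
Your proof is correct and takes a genuinely different route from the paper. The paper invokes Theorem \ref{thm:main-generic-determinant} to know that $\mathcal P_D+\mathcal U_D$ generates $\mathrm{Ann}(\det(A))$, and then verifies the Gr\"{o}bner property via Buchberger's criterion: it runs through cases (permanent vs.\ permanent with disjoint or overlapping leading terms, permanent vs.\ each type of unacceptable monomial) and checks that every $S$-polynomial reduces to zero modulo $\mathcal P_D+\mathcal U_D$. Your argument instead identifies the initial ideal $J$ of the proposed basis, characterizes its standard monomials as the anti-diagonal squarefree products, counts them as $\binom{n}{k}^2$ in degree $k$, and matches this against the already-known Hilbert function from Equation \ref{eq:Hilbert-generic}; equality of Hilbert functions together with $J\subset\mathrm{in}_\tau(\mathrm{Ann}(\det A))$ forces $J=\mathrm{in}_\tau(\mathrm{Ann}(\det A))$.

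What each approach buys: the paper's $S$-polynomial verification is elementary and, in principle, could stand on its own without the Hilbert function computation (though the paper still cites Theorem \ref{thm:main-generic-determinant} for generation). Your Hilbert-function comparison is shorter, avoids the case analysis entirely, and makes transparent why the result is uniform over all diagonal orders: only the leading terms of the $2\times2$ permanents matter, and those are fixed by Definition \ref{def:diagonal order} regardless of tie-breaking. Your concern about degrees $k>n$ is handled exactly as you say---any such monomial repeats a row or column---so the Hilbert functions agree in all degrees and the argument is complete.
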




\begin{proof}We will denote $\mathcal U_D $ and $\mathcal P_D$ by $\mathcal U, \mathcal P$ respectively in the following, where $D$ is understood.\par
The elements of $(\mathcal U+\mathcal P)$ generate $\mathrm{Ann}(\det(A))$. Since $\mathcal U$ is a set of monomials, it is already  Gr\"{o}bner. We use Buchberger's algorithm to find a Gr\"{o}bner basis for $\mathcal P+\mathcal U$. We consider several cases:

a) Let $F$ and $G$ be distinct permanents of $D$. Let $F=a_{ik}a_{jl}+a_{il}a_{jk}$ and $G=a_{uz}a_{vw}+a_{uw}a_{vz}$ be two permanents in $\mathcal P$. 

\begin{center}  $ F= perm\left(%
\begin{array}{cc}
 
  a_{ik} & a_{il} \\
  a_{jk} & a_{jl}\\
  \end{array}%
\right)$.

\end{center}
and 
\begin{center}  $ G=perm \left(%
\begin{array}{cc}

  a_{uz} & a_{uw} \\
  a_{vz} & a_{vw}\\
  \end{array}%
\right)$.

\end{center}

Let $f_1=a_{ik}a_{jl}$ be the leading term of $F$, and $g_1=a_{uz}a_{vw}$ be the leading term of $G$ with respect to the given diagonal ordering. Denote the least common multiple of $f_1$ and $g_1$ by $h_{11}$. Let 

\begin{equation*}
S(F,G)=(h_{11}/f_1)F-(h_{11}/g_1)G=a_{uz}a_{vw}a_{il}a_{jk}-a_{ik}a_{jl}a_{uw}a_{vz}.
\end{equation*}

Now using the multivariate division algorithm, reduce all the $S(F,G)$ relative to the set of all permanents. When there is no common factor in the initial terms of $F$ and $G$ the reduction is zero, as one can use $F$ and $G$ again as we show. First we reduce $S(F,G)$ dividing by $F\in\mathcal P$, so we will have

\begin{equation*}
S(F,G)+a_{uw}a_{vz}(a_{ik}a_{jl}+a_{il}a_{jk})=a_{uz}a_{vw}a_{il}a_{jk}+a_{uw}a_{vz}a_{il}a_{jk}.
\end{equation*}
Then we reduce the result using $G$ this time, so we will have
\begin{equation*}
a_{uz}a_{vw}a_{il}a_{jk}+a_{uw}a_{vz}a_{il}a_{jk}-a_{il}a_{jk}(a_{uz}a_{vw}+a_{uw}a_{vz})=0.
\end{equation*}

So we have shown that for all pairs $F$, $G$ of distinct permanents of $D$, the $S$-polynomials $S(F,G)$ reduces to zero with respect to $\mathcal P$. 

b) Let $F=a_{ik}a_{jl}+a_{il}a_{jk}$ and $G=a_{ik}a_{jm}+a_{im}a_{jk}$ be two permanents so that their initial terms have a common factor. We have 

\begin{equation*}
S(F,G)=a_{il}a_{jk}a_{jm}-a_{im}a_{jk}a_{jl}\in \mathcal U.
\end{equation*}

c) Let $F=a_{im}a_{jn}+a_{in}a_{jm}$ be a permanent and $M=a_{tk}a_{tl}$ be an unacceptable monomial. We have

\begin{equation*}
S(F,M)=a_{tk}a_{tl}a_{jm}a_{in} \in \mathcal U.
\end{equation*}

d) Let $F=a_{il}a_{jm}+a_{im}a_{jl}$ be a permanent and $M=(a_{kn})^2$ be an unacceptable monomial. We have

\begin{equation*}
S(F,M)=a_{im}a_{jl}(a_{kn})^2 \in \mathcal U.
\end{equation*}

e) Let $F=a_{il}a_{jm}+a_{im}a_{jl}$ be a permanent and $M=(a_{il})^2$ be an unacceptable monomial which has a common factor with the initial term of $F$. We have

\begin{equation*}
S(F,M)=a_{il}a_{im}a_{jl} \in \mathcal U.
\end{equation*}

f) Let $F=a_{il}a_{jm}+a_{im}a_{jl}$ be a permanent and $M=a_{jn}a_{kn}$ be an unacceptable monomial. We have

\begin{equation*}
S(F,M)=a_{im}a_{jl}a_{jn}a_{kn} \in \mathcal U.
\end{equation*}

This exhausts all possibilities, so the generating set $\mathcal P+\mathcal U$ is itself a Gr\"{o}bner basis by Buchberger's algorithm.

\end{proof}

\subsection{Discussion of connected sum}

\vspace{.2in}
\begin{defi} (\cite{MS})
A polynomial $F$ in $r$ variables is a connected sum if we can write $F=F'+F''$ with $F'$ and $F''$ in $r'$ and $r''$ variables, where $r'+r''=r$.
\end{defi}
\vspace{.2in}
Let $A$  be a generic $2\times 2$  matrix, we can write the determinant $A$ is a  sum of two polynomials in complementary sets of variables.
\vspace{.2in}




\begin{prop}(Buczy{\'n}ska, Buczy{\'n}ski,Teitler (\cite{BBT})
If a form $F$ of degree $d$ is a connected sum, then the apolar ideal has a minimal generator in degree d. (The converse does not hold.) 
\end{prop}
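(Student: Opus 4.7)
The plan is to exploit the bigrading on $S$ induced by the splitting $R = R' \otimes R''$ that comes from writing $F = F' + F''$ with $F' \in R'$, $F'' \in R''$ in disjoint variable sets (both forced to be forms of degree $d$ since their supports are disjoint). Write $S = S' \otimes S''$ accordingly, so $S_k = \bigoplus_{a+b=k} S'_a \otimes S''_b$, and decompose any $H \in S_k$ as $H = \sum_{a+b=k} H^{(a,b)}$. The first step I would establish is the key annihilation: a monomial in $S'_a \otimes S''_b$ with both $a, b > 0$ contains $S'$-variables of positive degree and $S''$-variables of positive degree, so by the contraction rule it annihilates every monomial of $F'$ (which involves no $X''$-variables) and every monomial of $F''$ by symmetry. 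Consequently, for $0 < k \le d$,
\[
H \circ F \;=\; H^{(k,0)} \circ F' \;+\; H^{(0,k)} \circ F'',
\]
and since $H^{(k,0)} \circ F' \in R'_{d-k}$ and $H^{(0,k)} \circ F'' \in R''_{d-k}$ sit in complementary variable subspaces of $R_{d-k}$, for $k < d$ one concludes $H \in \mathrm{Ann}(F)_k$ if and only if $H^{(k,0)} \in \mathrm{Ann}(F')_k$ and $H^{(0,k)} \in \mathrm{Ann}(F'')_k$.

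Next I would produce the candidate minimal generator. By nondegeneracy of the apolar pairing on $R'_d \times S'_d$ there exist $G' \in S'_d$ and $G'' \in S''_d$ with $G' \circ F' = 1$ and $G'' \circ F'' = 1$. Set $P := G' - G''$. The cross actions $G' \circ F''$ and $G'' \circ F'$ vanish, so $P \circ F = 1 - 1 = 0$ and $P \in \mathrm{Ann}(F)_d$.

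The main step is to verify that $P \notin S_1 \cdot \mathrm{Ann}(F)_{d-1}$, which suffices for minimality in degree $d$ because that subspace equals the degree-$d$ piece of the ideal $S_+ \cdot \mathrm{Ann}(F)_{<d}$. Suppose for contradiction $P = \sum_i y_i H_i$ with $y_i \in S_1$ and $H_i \in \mathrm{Ann}(F)_{d-1}$. Split each $y_i = y'_i + y''_i$ according to $S_1 = S'_1 \oplus S''_1$ and each $H_i$ bihomogeneously; projecting onto the $S'_d \otimes S''_0 \simeq S'_d$ component, only the contributions $y'_i H_i^{(d-1,0)}$ survive. By the first step each $H_i^{(d-1,0)}$ lies in $\mathrm{Ann}(F')_{d-1}$, so the projection lands in $S'_1 \cdot \mathrm{Ann}(F')_{d-1} \subseteq \mathrm{Ann}(F')_d$. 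But the $(d,0)$-component of $P$ is $G'$, which satisfies $G' \circ F' = 1 \ne 0$ and hence is not in $\mathrm{Ann}(F')_d$, a contradiction.

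The main obstacle, to the extent there is one, is the bigrading bookkeeping in the first step: one must verify carefully that all ``mixed'' bihomogeneous parts of $S$ vanish when applied to either summand of $F$. Once that vanishing is in place, the minimality argument becomes a clean projection onto the pure $(d,0)$ component, and the proposition drops out.
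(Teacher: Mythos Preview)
The paper does not give its own proof of this proposition; it is quoted from \cite{BBT} (listed as ``in preparation'') and is only invoked afterwards to deduce that the generic determinant, permanent, Pfaffian and Hafnian in the relevant sizes are not connected sums. So there is no paper-proof to compare your argument against.

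That said, your argument is correct and is the natural one. The bigrading on $S$ coming from $S = S'\otimes S''$ does exactly what you claim: any monomial in $S'_a\otimes S''_b$ with $a,b>0$ kills both $F'$ and $F''$, so for $0<k<d$ the condition $H\in\mathrm{Ann}(F)_k$ decouples as $H^{(k,0)}\in\mathrm{Ann}(F')_k$ and $H^{(0,k)}\in\mathrm{Ann}(F'')_k$, the two pure pieces landing in the disjoint summands $R'_{d-k}$ and $R''_{d-k}$ of $R_{d-k}$. Your reduction of ``minimal generator in degree $d$'' to $P\notin S_1\cdot\mathrm{Ann}(F)_{d-1}$ is valid because $\mathrm{Ann}(F)$ is an ideal, so $S_j\cdot\mathrm{Ann}(F)_{d-j}\subseteq S_1\cdot\mathrm{Ann}(F)_{d-1}$ for all $j\ge 1$. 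Projecting a hypothetical expression $P=\sum y_iH_i$ onto the $(d,0)$-component then forces $G'\in S'_1\cdot\mathrm{Ann}(F')_{d-1}\subseteq\mathrm{Ann}(F')_d$, contradicting $G'\circ F'=1$.

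The only hypothesis you use tacitly, and which deserves to be said out loud, is that both $F'$ and $F''$ are nonzero (hence honest degree-$d$ forms), so that $G'$ and $G''$ exist by nondegeneracy of the apolar pairing. This is of course the intended meaning of a connected sum decomposition, but the definition as stated in the paper does not spell it out.
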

In particular, since the generic determinant and permanent of size $n \ge 3$ have their annihilating ideals generated in degree 2, therefore they are not connected sums. This is also true for the Pfaffian of skew symmetric matrices and Hafnian of symmetric matrices of size $n\ge 6$.

\vspace{.2in}




\end{document}